\pgfplotsset{compat=1.18}
\newtheorem{theorem}{Theorem}[section]
\newtheorem{lemma}[theorem]{Lemma}
\newtheorem{proposition}[theorem]{Proposition}
\newtheorem{corollary}[theorem]{Corollary}
\theoremstyle{definition}
\newtheorem{definition}[theorem]{Definition}
\newtheorem{example}[theorem]{Example}
\newtheorem{remark}[theorem]{Remark}
\newtheorem{theoremalphabetic}{Theorem}
\Crefname{ex}{Example}{Examples}
\Crefname{thm}{Theorem}{Theorems}
\Crefname{lem}{Lemma}{Lemmas}
\Crefname{prop}{Proposition}{Propositions}
\Crefname{cor}{Corollary}{Corollaries}
\Crefname{con}{Conjecture}{Conjectures}
\Crefname{def}{Definition}{Definitions}
\Crefname{alg}{Algorithm}{Algorithms}
\Crefname{rmk}{Remark}{Remarks}
\Crefname{thmalph}{Theorem}{Theorems}
\Crefname{equation}{}{}
\numberwithin{equation}{section}
\numberwithin{figure}{section}
\DeclareMathOperator{\Zar}{Zar} 
\DeclareMathOperator{\rank}{rank} 
\DeclareMathOperator{\im}{im} 
\DeclareMathOperator{\conv}{conv} 
\DeclareMathOperator{\diag}{diag} 
\DeclareMathOperator{\Aff}{Aff} 
\DeclareMathOperator{\sign}{sign} 
\DeclareMathOperator{\intt}{int} 
\DeclareMathOperator{\vertices}{vert} 
\DeclareMathOperator{\relint}{relint}
\DeclareMathOperator{\Sing}{Sing}
\DeclareMathOperator{\Hess}{Hess}
\newcommand{\N}{\ensuremath{\mathbb{N}}}
\newcommand{\Z}{\ensuremath{\mathbb{Z}}}
\newcommand{\R}{\ensuremath{\mathbb{R}}}
\newcommand{\C}{\ensuremath{\mathbb{C}}}
\newcommand{\A}{\ensuremath{\mathcal{A}}}
\newcommand{\mR}{\ensuremath{\mathcal{R}}}
\DeclareTextFontCommand{\bfemph}{\bfseries\em}
\newcommand{\term}{\bfemph}
\renewcommand{\k}{\kappa}
\renewcommand*\env@matrix[1][*\c@MaxMatrixCols c]{%
  \hskip -\arraycolsep
  \let\@ifnextchar\new@ifnextchar
  \array{#1}}
\title[Copositivity, discriminants and nonseparable signed supports]{Copositivity, discriminants and\\ nonseparable signed supports}
\author{Elisenda Feliu, Joan Ferrer, and Máté L. Telek}
\date{\today}
\begin{document}
 \vspace*{-2em}
\maketitle
 \vspace{-2em}

\begin{abstract}
We establish a connection between discriminants and copositivity of sparse   Laurent polynomials. More generally, we consider signomials, which are defined on the positive orthant by  linear combinations of monomials with  real exponents, and their signed support, consisting of the support of the signomial and the sign of the coefficients. 
We provide a criterion to determine whether a given signomial is copositive, that is, only attains nonnegative values, which relies on finding the first intersection point in coefficient space of 
a path and a signed discriminant: the path contains the coefficient vector of the signomial and preserves its signs, and the signed discriminant encodes the singular  signomials with the given signed support. 
If the signed support satisfies a combinatorial condition termed nonseparability, we additionally show  that this intersection consists of one point, and that tracking one path with homotopy continuation suffices to decide upon copositivity.  

Building on these results, we show that copositive polynomials with nonseparable signed support can always be decomposed into a sum of nonnegative circuit polynomials,  generalising  thereby previously known supports having this property.  
\end{abstract}

\section{Introduction}

The cone $\mathcal{P}_{n, 2\delta}$ of globally nonnegative $n$-variate homogeneous real polynomials of degree~$2\delta$ is a central object in real algebraic geometry with applications in several areas, notably in polynomial optimization \cite{Parrilo,Theobald}. Providing algebraic descriptions of this cone and developing computationally tractable methods to determine nonnegativity are major tasks in the field. As deciding membership in $\mathcal{P}_{n,2\delta}$ is NP-hard \cite{Nesterov}, one often tests membership via a proper subcone using \emph{certificates} of nonnegativity—that is, decompositions into sums of manifestly nonnegative polynomials. The cone $\Sigma_{n,2\delta}$ of polynomials that are sums of squares   is the most extensively studied, as containment can be verified using semidefinite programming \cite{Lasserre_Global_optim,Parrilo}.  In 1888, Hilbert showed that $\Sigma_{n,2\delta}=\mathcal{P}_{n,2\delta}$ if and only if $n=2$, or $\delta=1$, or   $(3,2)$
\cite{Hilbert_SOS}. This motivated Hilbert's 17th problem \cite{hilbert}, which became a major driving force in the development of real algebraic geometry during the $20$th century. 
Other cones used to certify nonnegativity are the cone of polynomials that are 
Sums Of Nonnegative Circuits (SONC-cone) \cite{wolff_nonneg}, or that of the polynomials that are Sums of Arithmetic–Geometric Exponentials (SAGE) \cite{chandrasekaran_SAGE,framework_SAGE_SONC}. Independent works \cite{murray_Newtonpol,Wang_Nonneg} established an equivalence: a polynomial admits a SONC certificate if and only if it admits a SAGE certificate.

Recently, extensive research has focused not only on global nonnegativity but also on nonnegativity on semialgebraic sets. Among these, the positive real orthant $\R^n_{>0}$ is arguably the most prominent, and arises as the natural context for the SONC- and SAGE-cones. When restricting to $\R^n_{>0}$, we can consider \emph{signomials}, where the exponents of the monomials are allowed to take real, and not only integer, values. 
Following  the terminology in \cite{motzkin_copositive,Moment_polyomial_optimization}, we will say that a signomial is \emph{copositive} if it   attains nonnegative values on $\R^n_{>0}$.
The problem of deciding upon copositivity has attracted growing attention, driven by applications in graph theory, particle physics, and reaction network theory \cite{deKlerkPasechnik, copositive_feynman, Multistationarity2017}.   

In the context of copositivity, it is more convenient to fix the set of monomials of the signomial rather than fixing the degree. This is the \textit{sparse polynomial/signomial} setting, which dates back, at least, to the work of Gel'fand, Kapranov and Zelenvinsky \cite{gelfand}. Additionally, as the domain of a signomial is $\R^n_{>0}$, the sign of each term of the signomial is determined by the sign of its coefficient, and one may focus on the set of all signomials with given support and predetermined sign for each coefficient. This leads us to consider signomials written as 
\[f_c=\sum_{a\in\A_+} c_a\,x^a  - \sum_{b\in\A_-} c_b\,x^b \,\]
for $c\in \R^{\A_+\cup \A_-}_{>0}$. The pair $(\A_+,\A_-)$ is called the \emph{signed support}  of $f_c$.

One of the main goals of this work is to investigate the relation between the boundary of the cone of \emph{sparse} copositive signomials, discriminants, the signed support $(\A_+,\A_-)$, and SONC certificates.
To explain this, given a pair $(\A_+,\A_-)$, we let $\A=\A_+\cup \A_-$ and consider the \textit{signed $(\A_+,\A_-)$-discriminant}, consisting of all $c\in \R^{\A}_{>0}$ for which $f_c$ or 
a truncation 
\[f_c^\Gamma \coloneqq
\sum_{a\in\A_+\cap \Gamma} c_a\,x^a  - \sum_{b\in\A_-\cap \Gamma} c_b\,x^b\,,\quad \text{$\Gamma$ nonempty face of $\conv(\A)$}\,,\]
has a positive singular zero. 
This discriminant can be seen to arise by fixing the signs of the coefficients in the positive $\A$-discriminant of \cite{bihan2022new}, which decomposes the space of coefficients in a disjoint union of connected sets for which the set of  zeros of $f_c$ has constant isotopy type. 

If the signed support of a  signomial $f_c$ satisfies $\A_-=\varnothing$, then $f_c$ is clearly copositive. If some element of $\A_-$ is a vertex of $\conv(\A)$, then it is well-known that   $f_c$ is not copositive. 
Our first main result is a criterion to decide whether $f_c$ is a copositive signomial in the remaining cases, for which deciding upon copositivity is not immediate. 
A simplified version of this result is  as follows.  

\begin{theoremalphabetic}[\Cref{thm:criterion:copositivity}]\label{alphabetic:thm:criterion}
Let $f_c$ be a signomial with signed support $(\A_+,\A_-)$ satisfying  $\A_-\neq \varnothing$ and    $\A_-\subseteq \conv(\A_+)$. 
Let $c(t)\subseteq\R^\A_{>0}$ be the path defined for $t\in \R_{>0}$ by $c(t)_a=c_a$ if $a\in \A_+$ and $c(t)_a=c_a\,t$ if $a\in \A_-$.   Then, the set
    \[T\coloneqq\bigcup_{\substack{\varnothing\neq \Gamma \text{ face of}\\ \conv(\A)}}\Big\{t\in\R_{>0}\colon f_{c(t)}^\Gamma\text{ has a positive singular zero}\Big\}\]
   has a minimum $t_*$, and $f_c$ is copositive if and only if $t_*\geq 1$.
\end{theoremalphabetic}

Observe that in the definition of the set $T$ we also consider $\conv(\A)$ to be a face. 
An interpretation of \Cref{alphabetic:thm:criterion} is that the complement of the signed discriminant has 
a unique connected component consisting of coefficients for which the zero set of the signomial is empty. The set $T$ in the theorem gives us the 
values of $t$ at which the 
path $c(t)$ intersects the signed $(\A_+,\A_-)$-discriminant. Computing it involves finding the positive zeros of finitely many signomial systems, specifically  of the   critical systems $F^\Gamma(c(t),x)$ whose zeros are the pairs $(t,x)$ for which $x$ is a positive singular zero of $f^\Gamma_{c(t)}$. 
At a first glance, this might seem similar to deciding copositivity by computing the critical points of $f_c$ in $\R^n_{>0}$ and evaluating $f_c$ at each of them. However,  \Cref{alphabetic:thm:criterion} avoids the evaluation step, which is a major source of numerical error. Instead, deciding whether $t_*\geq 1$ can be addressed using certification  of zeros of polynomial systems.

\smallskip
We next focus on the problem of deciding whether a copositive signomial admits a SONC decomposition, that is,  can be written as a sum of nonnegative circuits. 
An important open question for this problem is to characterise the sign supports $(\A_+,\A_-)$ for which any copositive signomial having this signed support  admits a SONC decomposition. 
Several geometric conditions on $(\A_+,\A_-)$ ensuring this property have already been described \cite{murray_Newtonpol,wolff_nonneg,Wang_Nonneg}. The pairs satisfying these conditions are particular instances of nonseparable signed supports, a concept that we learnt from the  upcoming work ~\cite{Mate&Timo2025}.  The pair $(\A_+,\A_-)$ is said to be \textit{nonseparable} if $\A_-\subseteq \conv(\A_+)$ and for each regular subdivision of the points in $\A_+$, there exists a cell $D$ of the subdivision containing $\A_-$. Nonseparability allows points in $\A_-$ to be at the boundary of both $D$ and $\conv(\A_+)$.

Our second main result states that given a nonseparable signed support, the SONC-cone and the cone of copositive signomials agree. 

\begin{theoremalphabetic}[\Cref{thm:nonseparable:signed:supports:SONC}]\label{alphabetic:thm:sonc}
If $f$ is a signomial with nonseparable signed support such that $\A_-$ is in the interior of $\conv(\A_+)$, then  $f$ is copositive if and only if it admits a SONC decomposition. 
\end{theoremalphabetic}

To show this result, we first prove that any signomial that has a positive singular zero and nonseparable signed support admits a SONC decomposition. To this end, we build and extend  several ideas from  \cite{Wang_Nonneg}. With this in place, we use our criterion in \Cref{alphabetic:thm:criterion} to show that the SONC decomposition exists for any copositive signomial, also in the absence of singular zeros.

A consequence of \Cref{alphabetic:thm:criterion} and \Cref{alphabetic:thm:sonc}
is that for Laurent polynomials, the boundary of the cone of copositive polynomials with nonseparable signed support $(\A_+,\A_-)$ is contained in the signed $(\A_+,\A_-)$-discriminant (\Cref{thm:boundary}). This aligns well with other work in the literature connecting discriminants and the boundary of the cone of copositive polynomials. For example,  
in \cite[Proposition 6.5]{Nie_discriminants} it is shown that the algebraic boundary of the    cone of copositive homogeneous polynomials of degree $\delta$ is the principal $\A$-determinant, where $\A = \{ a \in \N^n : a_1 + \dots + a_n = \delta \}$.

Our third and final main result addresses the computation of the set $T$ in \Cref{alphabetic:thm:criterion} for nonseparable signed supports. 

\begin{theoremalphabetic}[\Cref{thm:1solution,thm:Jacobian:nondeg:tracking}]
\label{alphabetic:thm:1sol} 
Let $f_c$ be a signomial with nonseparable signed support $(\A_+,\A_-)$ such that $\conv(\A_+)$ is full dimensional and contains $\A_-$ in its interior. 

Then the set $T$  in \Cref{alphabetic:thm:criterion} has exactly one element $t_*$. Additionally, the signomial $f_{c(t_*)}$ has exactly one positive singular zero $x_*$ and $(t_*,x_*)$ is a nondegenerate  zero of the critical system. 
\end{theoremalphabetic}

The critical systems of nonseparable signed supports are thus computationally well behaved and 
\Cref{alphabetic:thm:1sol} allows for an efficient practical implementation of \Cref{alphabetic:thm:criterion} using homotopy continuation methods and tracking one solution path with parameter homotopies. 
In the general case, if the zero sets of the critical systems defining the set $T$ in \Cref{alphabetic:thm:criterion} have positive dimension, then numerical implementations of the criterion for copositivity might miss solutions, possibly leading to a wrong outcome of the criterion (see \Cref{subsec:limitations}). We show in \Cref{prop:generic:coefficients:isolated} that this behaviour does not happen for generic coefficients. 

On a closing note,  the ideas developed in this paper have the potential to lead to  new closed formulas on copositivity depending on the coefficients of a signomial, similar to the well-established circuit numbers. To illustrate this, in \Cref{example:elimination:square,example:elimination:square:end}, we combine \Cref{alphabetic:thm:criterion} with \Cref{alphabetic:thm:1sol} to give a closed formula for copositivity when $\A_+$ consists of the vertices of a square and $\A_-$ consists of its barycenter.

The paper is organised as follows. In \Cref{sec:copositive:signed:discriminants}, we introduce notation and  the signed $(\A_+,\A_-)$-discriminant, study its Euclidean and Zariski closures, and state and prove   the criterion on  copositivity in \Cref{thm:criterion:copositivity}. \Cref{sec:nonseparable:supports} studies SONC decompositions of nonseparable signed supports: \Cref{subsec:geometry:nonseparable} introduces nonseparable signed supports and some geometric properties, \Cref{subsec:SONC:supports}, formalises the problem of characterising SONC supports, \Cref{subsec:decomposing:critical:system} focuses on SONC decomposition for singular signomials with nonseparable signed supports by adapting  ideas from \cite{Wang_Nonneg}, and, finally, in \Cref{subsec:thm_SONC}, we show that nonseparable signed supports are SONC. 
\Cref{sec:computational} addresses computational aspects of the criterion on copositivity: 
first, we discuss some of its limitations in \Cref{subsec:limitations}, then, we show that nonseparable signed supports behave nicely in a computational sense in \Cref{subsec:criterion:nonseparable:supports}, and finally, in  \Cref{subsec:implementation:SAGE}, we present the Julia package \texttt{CopositivityDiscriminants.jl}, which offers a proof-of-concept implementation of the methods developed in this paper, and we discuss how our criterion compares to other available methods.

\subsection*{Acknowledgments} We thank Carles Checa for discussions on this work, and Simon Telen for   discussions on discriminants and for suggesting \Cref{example:signed:Adiscriminant:not:closed}. 
This work has been supported by the European Union under the
Grant Agreement number 101044561, POSALG\footnote{Views and opinions expressed are those of the
authors only and do not necessarily reflect those of the European Union or European Research
Council (ERC). Neither the European Union nor ERC can be held responsible for them.}.

\section{Copositive signomials and the signed discriminant }\label{sec:copositive:signed:discriminants}

The goal of this section is to provide a characterization of copositivity of Laurent polynomials with real coefficients or more generally, signomials. This is achieved in \Cref{thm:criterion:copositivity} in \Cref{subsec:copositivity:criterion}. Before that, we give general notation agreements in \Cref{subsec:notation}, introduce signed supports in \Cref{subsec:signs}, and discuss   critical systems and the signed discriminant as our main theoretical tools in \Cref{subsec:signed_discriminant}.

\subsection{Notation agreements}\label{subsec:notation}
Given $n\in \N$, we let $[n]=\{1,\dots,n\}$.
The positive and nonnegative orthants are denoted by $\R^n_{>0}$ and $\R^n_{\geq 0}$ respectively, and we denote the complex torus by $(\C^*)^n\coloneqq(\C\setminus\{0\})^n$. 
The Euclidean closure of a set $S$ is denoted  by $\overline{S}$. By $\intt(S)$ and $\relint(S)$ we mean the interior and the relative interior of $S$ respectively and $\lVert v \rVert$ denotes the Euclidean norm of $v\in \R^n$.

For $u,v\in \C^n$, $u\star v$ denotes component-wise multiplication and $\langle u,v\rangle$ the standard scalar product. For $u\in (\C^*)^n$, $u^{-1}=1/u$ is taken component-wise. We let $\mathbbm{1}$ denote the all ones vector of size inferred from the context. The space of real $(n\times m)$-matrices is denoted by $\R^{n\times m}$. It will often be convenient to consider real matrices where rows and columns are indexed by two finite sets $\mathcal{A},\mathcal{B}$, which are not necessarily ordered, and in this case we denote the set by $\R^{\mathcal{A}\times \mathcal{B}}$.  

For $t\in \R_{>0}$ and $h\in \R^n$ a vector, $t^h = ( t^{h_1},\dots,t^{h_n})$. 
For $x\in \R^n_{>0}$   and $a\in \R^{n}$, we refer to $x^a\coloneqq x_1^{a_1}\cdots x_n^{a_n}$ as a monomial, even if the exponent is not an integer vector. For a 
matrix $A\in \R^{n\times k}$  with columns $(a_1,\dots,a_k)$, we denote likewise 
$x^A$  the  monomial map $x^A\colon\R^n_{>0}\to \R^k_{>0}$ with $(x^A)_{j}=x^{a_j}$. 
For a differentiable map $F\colon \R^{n}\to \R^m$, $J_F(x)$ denotes its Jacobian at $x\in \R^n$.

Given a set $\A\subseteq \R^n$, $\conv(\A)$ and $\Aff(\A)$ refer to the convex and affine hulls of $\A$ respectively. For a polytope $P$, ${\rm vert}(P)$ denotes the set of vertices of $P$. Throughout, we consider $P$ to be a face of the polytope $P$ itself.

\subsection{Copositive signomials and sign supports} \label{subsec:signs}
A \term{signomial} is a function $f\colon\R^n_{>0}\to \R$ of the form 
\begin{equation}\label{eq:gen}
f(x)=\sum_{a\in\A_+}c_a\, x^a-\sum_{b\in \A_-}c_b\, x^b
\end{equation}
where $\A_+,\A_-\subseteq\R^n$ are disjoint finite sets, and $c_a>0$, $c_b>0$ for all $a\in\A_+$ and $b\in \A_-$. Observe that the sign of a term of $f$  evaluated at some $x\in \R^n_{>0}$ is the sign of its coefficient. 
We let  $\mathcal{S}_n$ denote the set of all signomials with domain $\R^n_{>0}$, and for a signomial $f\in \mathcal{S}_n$, we let 
\[V_{>0}(f)\coloneqq \{x\in\R^n_{>0}\colon f(x)=0\}\] be the set of positive zeros of $f$.

When the sets $\A_+,\A_-\subseteq\Z^n$ consist of integer tuples, a signomial $f$ is naturally identified with a  Laurent polynomial with real coefficients.

Let us introduce some objects and notation in relation to \cref{eq:gen}.
\begin{enumerate}
\item For a signomial $f$ as in \cref{eq:gen}, the pair $(\A_+,\A_-)$ determined by $f$ is called the \term{signed support} of $f$, whereas the union $\A\coloneqq\A_+\cup\A_-$ is the \term{support} of $f$. We refer to the vector $c=(c_a)_{a\in \A}\in \R^{\A}_{>0}$ as the vector of \term{nonsigned coefficients} of $f$. 

    The dimension of $\A$, or of $(\A_+,\A_-)$, is by definition the dimension of $\conv(\A)$. We say that 
    the (signed) support  is full dimensional if it has dimension  $n$.

   \smallskip
    \item  Given disjoint finite sets $\A_+,\A_-\subseteq \R^n$, the set of signomials with signed support $(\A_+,\A_-)$ is denoted by 
    \[ \mathcal{S}(\A_+,\A_-) \subseteq \mathcal{S}_n \,. \] 
    By identifying  any $f\in \mathcal{S}(\A_+,\A_-)$   with its vector of nonsigned coefficients, we will identify $\mathcal{S}(\A_+,\A_-)$ with $\R^\A_{>0}$, for $\A=\A_+\cup\A_-$, when convenient.

We define the \term{sign vector $\sigma\in\{-1,1\}^{\A}$ associated with  $(\A_+,\A_-)$} by 
\begin{equation}\label{eq:sigma}
\sigma_a=1 \text{ if }a\in\A_+ \quad\text{ and }\quad\sigma_a=-1\text{ if } a\in\A_-\,,  
\end{equation} 
and consider the associated involution
    \begin{align}\label{eq_def_tau}
        \tau\colon\C^\A  \to\C^\A \qquad 
        c  \mapsto c\star \sigma \, ,
    \end{align}
    which maps the orthant of $\R^\A$ containing the coefficients of signomials in $\mathcal{S}(\A_+,\A_-)$ to $\R^{\mathcal{A}}_{>0}$. 

    \item  Given $f\in \mathcal{S}(\A_+,\A_-)$ written as in \cref{eq:gen}, let $\A=\A_+\cup \A_-$ and $\Gamma \subseteq \conv(\A)$ be a nonempty face. The \term{truncation of $f$ at $\Gamma$} is  the signomial
\begin{equation}\label{eq:truncated}
\begin{aligned}
f^\Gamma & \coloneqq\sum_{a\in\A_+\cap \Gamma}c_a\, x^a-\sum_{b\in\A_-\cap\Gamma}c_b\, x^b \ \in \mathcal{S}(\A_+\cap \Gamma,\A_-\cap \Gamma)\,.
\end{aligned}
\end{equation}
The  signed support and support of  $f^\Gamma$  are denoted respectively by 
\[(\A^\Gamma_+,\A^\Gamma_-)\coloneqq(\A_+\cap\Gamma,\A_-\cap\Gamma) \quad \text{and}\quad \A^\Gamma\coloneqq\A\cap\Gamma\, .\]
For every nonempty face $\Gamma$ of $\conv(\A)$, we let 
\[\pi_\Gamma \colon \mathbb{C}^{\A} \to \C^{\A^\Gamma}\] 
be the natural projection map. 
\end{enumerate}

\smallskip
The main object of study of this work are signomials $f\in \mathcal{S}_n$ that are \term{copositive}, 
that is, 
\[ f(x)\ge 0 \qquad \text{for all}\quad x\in \R^n_{>0}\,.\] 

We state now a classical result that explains why considering the sign  of the coefficients of a signomial  is natural  when studying  copositivity of  signomial with fixed support. 

\begin{lemma}\label[lem]{lemma:copositivity:signs:truncations}
    Let $\A_+,\A_-\subseteq \R^n$ be disjoint finite sets, $\A=\A_+\cup\A_-$,  $f\in \mathcal{S}(\A_+,\A_-)$, and $\Gamma\subseteq \conv(\A)$ a nonempty face. For every $x\in \R^n_{>0}$ such that $f^\Gamma(x)\neq 0$, there exists $y\in \R^n_{>0}$ such that 
    \[\sign(f^\Gamma(x))=\sign(f(y))\,.\]
    In particular, if $\vertices(\conv(\A))\cap\A_-\neq \varnothing$, then $f$ is not copositive.
\end{lemma}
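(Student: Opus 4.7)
The plan is to use the standard scaling trick along an inner normal to the face $\Gamma$, which will make the truncation $f^\Gamma$ appear as the leading-order term of $f$ along a suitable curve in $\R^n_{>0}$.

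First I would invoke the defining property of a face: since $\Gamma$ is a face of the polytope $\conv(\A)$, there exist $v\in\R^n$ and $w\in\R$ such that $\langle a,v\rangle\geq w$ for every $a\in\A$, with equality precisely when $a\in\A^\Gamma=\A\cap\Gamma$. For $t>0$ I would then define the curve
\[
y_t\in\R^n_{>0},\qquad (y_t)_i:=x_i\, t^{v_i},
\]
so that $y_t^a=x^a\, t^{\langle v,a\rangle}$ for every $a\in\A$. Substituting into $f$ and factoring out $t^w$ yields
\[
f(y_t)\;=\;t^w\Bigl(\,f^\Gamma(x)\;+\;\sum_{a\in\A\setminus \Gamma}\sigma_a\, c_a\, x^a\, t^{\langle v,a\rangle-w}\,\Bigr),
\]
where $\sigma$ is the sign vector from \eqref{eq:signa} and every exponent $\langle v,a\rangle-w$ in the residual sum is strictly positive.

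The next step is to let $t\to 0^+$. The residual sum vanishes in the limit, so the parenthesised expression converges to $f^\Gamma(x)\neq 0$ by hypothesis. Since $t^w>0$ for $t>0$, it follows that for all sufficiently small $t>0$,
\[
\sign\bigl(f(y_t)\bigr)\;=\;\sign\bigl(t^w f^\Gamma(x)\bigr)\;=\;\sign\bigl(f^\Gamma(x)\bigr),
\]
so any such $y:=y_t$ does the job.

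For the ``in particular'' part I would specialise to a vertex $a_0\in\vertices(\conv(\A))\cap\A_-$ and take $\Gamma:=\{a_0\}$. Then $f^\Gamma(x)=-c_{a_0}x^{a_0}<0$ for every $x\in\R^n_{>0}$, so the first part produces $y\in\R^n_{>0}$ with $f(y)<0$, proving that $f$ is not copositive. I do not anticipate any real obstacle here; the only mildly delicate point is ensuring that the chosen $v$ strictly separates $\A\setminus\Gamma$ from $\A^\Gamma$, which is a standard consequence of $\Gamma$ being a face of the convex hull of the finite set $\A$.
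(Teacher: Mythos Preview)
Your argument is correct and is exactly the standard scaling argument one uses for this statement. The paper itself does not give a proof but simply refers to \cite[Proposition 2.3]{kinetic_space_multistationarity_dual_phosphorylation}, so your self-contained version is a perfectly adequate replacement and matches what that reference does.
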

\begin{proof}
    See, for instance, \cite[Proposition 2.3]{kinetic_space_multistationarity_dual_phosphorylation}.
\end{proof}

\subsection{Critical systems and discriminants}\label{subsec:signed_discriminant}

 A \term{singular positive zero} of a signomial $f\in \mathcal{S}_n$   is a zero of the  \term{critical system of $f$},
\begin{equation}\label{eq_def_crit_syst}
    \left(f(x),x_1\frac{\partial f}{\partial x_1}(x),\dots, x_n\frac{\partial f}{\partial x_n}(x)\right) \,,
\end{equation}
in $\R_{>0}^n$. 
We denote the set of singular positive zeros of $f$ by $ \Sing_{>0}(f)$.

\begin{remark}\label{rk:copositive_zeros}
If $f$ is a copositive signomial and $f(x)=0$ for $x\in \R^n_{>0}$, then necessarily $x$ is a singular positive zero: a nonzero partial derivative would imply $f$ attains both signs around $x$.
Hence, for a copositive signomial $f$, we have  $x\in\Sing_{>0}(f)$ if and only if $x\in V_{>0}(f)$. 
\end{remark}

The monomials appearing in  each entry of  \eqref{eq_def_crit_syst} are among the monomials of $f$ and are hence determined by the support of $f$. 
More specifically, 
given  disjoint finite sets $\A_+,\A_-\subseteq \R^n$, let $\A=\A_+\cup\A_-$. 

Consider the following matrices:
\begin{itemize}
    \item $A\in \R^{[n]\times \A}$  whose $a$-column is the vector $a$.
    \item $\hat{A}\in\R^{[n+1]\times \A}$ is  obtained by adding a row of ones on top of $A$.
    \item $C\coloneqq\hat{A}\diag(\sigma)\in \R^{[n+1]\times \A}$, where $\sigma$ is the sign vector in \eqref{eq:sigma}. 
\end{itemize}
 The \term{critical system of $(\A_+,\A_-)$} is the system of signomials on $\R^\A_{>0}\times \R^n_{>0}$ defined by
\begin{equation}\label{eq:crit:syst:matrix:form}
F(c,x)\coloneqq C(c\star x^A)\,, \qquad \text{for }(c,x) \in\R^\A_{>0}\times \R^n_{>0}\, .
\end{equation}
The specialization of $F$ to the vector of nonsigned coefficients $c$ of a signomial 
$f\in\mathcal{S}(\A_+,\A_-)$ is precisely 
the critical system of $f$. 

We consider the  incidence correspondence  
\begin{equation}\label{eq_positive_incidence_variety}
    \mathcal{I}_{>0}(\A_+,\A_-)\coloneqq \big\{\, (c,x)\in  \R^\A_{>0} \times \R^n_{>0}: F(c,x)=0\, \big\}\, ,
\end{equation}
 the projection $\pi\colon\C^\A\times\C^n\to\C^\A$ onto the first component, 
and as in \cite[Section 2.3]{Mate2024} we define the \term{signed $(\A_+,\A_-)$-discriminant}  as the set of vectors of nonsigned coefficients of signomials in $\mathcal{S}(\A_+,\A_-)$ that have at least one positive singular zero:
\begin{align}
\label{eq:DegSignedDiscriminant}
\nabla_{>0}(\A_+,\A_-)\coloneqq\pi(\mathcal{I}_{>0}(\A_+,\A_-))\, . 
\end{align}

For  a nonempty face $\Gamma$ of $\conv(A)$, the critical system of  $(\A^\Gamma_+,\A^\Gamma_-)$ is 
\begin{equation}\label{eq:critical_truncated} F^{\Gamma}(c,x) = C^{\Gamma}(c \star x^{A^\Gamma})\qquad \text{for} \quad (c,x)\in \mathbb{R}^{\mathcal{A}\cap\Gamma}_{>0}\times \mathbb{R}^n_{>0}\, ,
\end{equation}
 where 
$C^\Gamma \in \mathbb{R}^{[n+1] \times  \mathcal{A}^ \Gamma}$ and $A^\Gamma \in \mathbb{R}^{[n] \times \mathcal{A}^ \Gamma}$ are the submatrices of $C$ and $A$ formed by the columns indexed by the elements of $\mathcal{A}^\Gamma$. 

We remark that in our definition of the signed $(\A_+,\A_-)$-discriminant, we do not take the Zariski nor the Euclidean closure, as it is common for other types of discriminants such as the $\mathcal{A}$-discriminant and its positive counterpart that we discuss below. 
In what follows, we show that points in the Euclidean closure of the signed $(\A_+,\A_-)$-discriminant correspond to singularities of truncated signomials. We begin with an illustrative example.

\begin{example}\label[ex]{example:signed:Adiscriminant:not:closed}
For every real number $k>1$, consider the signomial
\begin{align*}
    f_k &=(1-\tfrac{1}{k})+(4+\tfrac{1}{k})x_2+x_1^2-(2-\tfrac{1}{k})x_1-4x_1x_2-5x_2^2\\
    &=(x_1+x_2-1)(x_1-5x_2-1+\tfrac{1}{k})
\end{align*}
with signed support $\A_+=\{(0,0),(0,1),(2,0)\}$, $\A_-=\{(1,0),(1,1),(0,2)\}$ and vector of nonsigned coefficients $c_k=(1-\tfrac{1}{k},4+\tfrac{1}{k},1,2-\tfrac{1}{k},4,5)$. 
We have that $c_k\in\nabla_{>0}(\A_+,\A_-)$, as  $f_k$ has a singular zero at $(1-\tfrac{1}{6k},\tfrac{1}{6k})$, where the lines $x_1+x_2-1=0$ and $x_1-5x_2-1+\tfrac{1}{k}=0$ intersect. 

When $k\to \infty$, $c_k$ tends to $c_\infty=(1,4,1,2,4,5)$ and the corresponding signomial $f_\infty =(x_1+x_2-1)(x_1-5x_2-1)$ has no positive singular zero in $\R^n_{>0}$ (the two lines intersect now at $(1,0)$). Therefore $c_\infty\notin \nabla_{>0}(\A_+,\A_-)$, showing that $\nabla_{>0}(\A_+,\A_-)$ is not closed. 
However, for the face $\Gamma = \conv(\{(0,0),(2,0)\})$, the truncation $f_\infty^\Gamma = x_1^2 - 2x_1 + 1$ is singular at $(1,x_2)$ for all $x_2 >0$ from where it follows that $c_\infty\in \nabla_{>0}(\A_+^\Gamma,\A_-^\Gamma)$. 
\end{example}

Using tools from toric geometry, we study now the Euclidean closure of signed discriminants and prove two results that will be used in the proofs of \Cref{lem:SONC:singular:zeros} and \Cref{thm:criterion:copositivity} respectively.

\begin{proposition}\label[prop]{prop:relatively:compact:set}
\label[prop]{prop:closure:Adiscr}
Let $\A_+,\A_-\subseteq \R^n$ be disjoint finite sets and let $\A=\A_+\cup\A_-$.
\begin{enumerate}[label=(\roman*)]
\item  The set 
      \begin{align} \label{eq:DiscUnion}
         \bigcup_{\substack{\varnothing\neq \Gamma \text{ face of}\\   \conv(\A)}} \pi_\Gamma^{-1}\Big(\nabla_{>0}(\A^\Gamma_+,\A^\Gamma_-)\Big)
     \end{align}
     is closed in $\R^{\A}_{>0}$ and contains the closure of $\nabla_{>0}(\A_+,\A_-)$ in the Euclidean topology of $\R^{\A}_{>0}$.

    \item If $\A$ is full dimensional and  $\A_-\subseteq\intt(\conv(\A_+))$, then  $V_{>0}(f)$ is compact in $\R^n_{>0}$ for all $f \in \mathcal{S}(\A_+,\A_-)$.

    \item 
    Let $\{\gamma_k\}_{k\in \mathbb{N}}$ be a sequence with limit $c\in \R^\A_{\geq 0}$  that satisfies $c_a\neq 0$ for all $a\in \vertices(\conv(\A))$. 
    If for all $k\in \N$, the signomial $f_k\in \mathcal{S}(\A_+,\A_-)$ with vector of nonsigned coefficients $\gamma_k$ satisfies $V_{>0}(f_k)\neq \varnothing$, then $c_b\neq 0$ for some $b\in \A_-$. 
  
\end{enumerate}
\end{proposition}
\begin{proof}
We start by showing a general fact that will be used to prove the three statements.
For a matrix $B\in \R^{[s]\times \A}$, consider  the systems of signomials of the form $G=B(c\star x^A)$ with $c\in \R^\A_{\geq 0}$. 
Let $Y_{>0}$  be the image of the monomial map 
\[\varphi \colon \R^{n+1}_{>0} \to \R^\A_{>0}\,  \qquad \hat{x} = (u,x)  \mapsto \hat{x}^{\hat{A}} = (u,\dots,u)\star x^A\, \]
and let $Y_{\geq 0}$ be its Euclidean closure  in $\R^\A$. 
The set $Y_{\geq 0}$ is called an \emph{irrational affine toric variety} in \cite{SottilePir},  while in the case where $A$ has integer entries, it
is termed the \emph{nonnegative affine toric variety} in \cite{telen2023positive}.
By \cite[Equation (4)]{SottilePir} (see also  \cite[Equation (9)]{SottilePostinghelVillamizar}), for every nonzero $y \in Y_{\geq 0}$, there exists a nonempty face $\Gamma$ of $\conv(\mathcal{A})$ and $\hat{x} = (u,x) \in \mathbb{R}^{n+1}_{>0}$ such that 
\begin{align}
\label{eq:toricproof}
    y_{a} = u\,  x^{a} \quad  \text{if} \quad a \in \Gamma \cap \mathcal{A} \qquad \text{and} \qquad y_{a} = 0 \quad \text{otherwise\,.} 
\end{align}

Let $\Delta = \{ \, y \in \R^\A_{\geq 0} :  \sum_{a\in \A} y_a = 1 \, \} \subseteq \mathbb{R}^\A$  be the probability simplex.
We will use that the following fact holds. 

($\ast$) \emph{Given a   sequence  $\{\gamma_k\}_{k\in \mathbb{N}}$ converging to some $c\in \R^\A_{\geq 0}$ and a sequence  $\{y_k\}_{k\in \mathbb{N}} \subseteq   Y_{>0}\cap \Delta$ such that $B(\gamma_k\star y_k)=0$ for all $k$, there exists a subsequence of $\{y_k\}$ converging to some  $y \in Y_{\geq 0} \cap \Delta$ that satisfies $B(c\star y)=0$.} 

To see why this holds, we first consider a  subsequence  $\{y_{i_k}\}$  of $\{y_k\}_{k\in \mathbb{N}}$ that converges to some $y\in Y_{\geq 0} \cap \Delta$, which exists as this set is compact. 
For any $i_k$, using that $0=B(\gamma_{i_k}\star y_{i_k}) = B\diag(y_{i_k})\gamma_{i_k}$, we have
     \begin{align*}
          0 \leq  \lVert B(c\star y) \rVert  &= \lVert B\diag(y)c \rVert = \lVert B\diag(y)c - B\diag(y_{i_k})c + B\diag(y_{i_k})c -  B\diag(y_{i_k})\gamma_{i_k}  \rVert  \\
           &\leq \lVert B\diag(y-y_{i_k}) \rVert \, \lVert c  \rVert \,  + \, \lVert B\diag(y_{i_k}) \rVert \, \lVert c -  \gamma_{i_k}  \rVert \, .       \end{align*}
As both summands converge to zero,
 it must hold that $B(c\star y)=0$. 

 \smallskip
 We are now ready to prove the statements. 
For part (i), consider the matrix $C$ defining the critical system $F(c,x)= C(c\star x^A)$ of $(\A_+,\A_-)$ as in \eqref{eq:crit:syst:matrix:form}. 
 We observe that
 \begin{equation}\label{eq:nabla_Y}
    \begin{aligned}
    \nabla_{>0}(\A_+,\A_-) &= \{ \, c \in \mathbb{R}^{\A}_{>0} : C (c\star x^A) = 0  \text{ for some }x \in \mathbb{R}^n_{>0}\}  \\ 
    &= \{ \, c \in \mathbb{R}^{\A}_{>0}  :  u\,  C(c\star x^A) = 0 \text{ for some }(u,x) \in \mathbb{R}^{n+1}_{>0}\} \\
     &= \{ \, c \in \mathbb{R}^{\A}_{>0} :  C(c\star \hat{x}^{\hat{A}}) = 0 \text{ for some } \hat{x} = (u,x) \in \mathbb{R}^{n+1}_{>0}\} \\ 
       &= \{ \, c \in \mathbb{R}^{\A}_{>0} :  C(c\star y) = 0  \text{ for some } y \in Y_{>0} \cap \Delta    \}\,,
       \end{aligned}
       \end{equation}
     where in the last equality we use  that
      $y \in Y_{>0}$ if and only if $\tfrac{y}{\sum_{a\in \A} y_a}  \in Y_{>0}$ as $Y_{>0}$ is a cone.

Let $\{\gamma_k\}_{k\in \mathbb{N}} \subseteq \nabla_{>0}(\A_+,\A_-)$ be a convergent sequence with limit $c \in \mathbb{R}^{\A}_{>0}$. By \eqref{eq:nabla_Y}, for every $k$ there exists $y_k \in Y_{>0}\cap \Delta$ such that $C(\gamma_k\star y_k) = 0$. 
We are in the general situation above with $B=C$ and hence, using the fact ($\ast$),  there exists 
$y\in Y_{\geq 0} \cap \Delta$ such that $C(c\star y)=0$. 
For this $y$, 
there exists $\hat{x} = (u,x) \in \R^{n+1}_{>0}$ and a nonempty face $\Gamma$ of $\conv(\A)$ satisfying \eqref{eq:toricproof}, and hence
      \begin{equation}\label{eq:toricproof2} 
      0=C(c\star y) =   C^\Gamma (\pi_\Gamma(c)\star \hat{x}^{\hat{A}^\Gamma})\, .
      \end{equation} 
We conclude that  $\pi_\Gamma(c) \in \nabla_{>0}(\A^\Gamma_+,\A_-^\Gamma)$, that is, $c\in \pi_\Gamma^{-1}(\nabla_{>0}(\A^\Gamma_+,\A_-^\Gamma))$. 

Applying this argument to the truncated signed support of a face $\Gamma$ as well, 
we obtain that  for every $c \in \pi_\Gamma^{-1}\big(\,\overline{\nabla_{>0}(\A^\Gamma_+,\A^\Gamma_-)}\,\big)$, there exists a nonempty face $\Gamma' \subseteq \Gamma$ such that $c \in \pi_{\Gamma'}^{-1}\big(\,\nabla_{>0}(\A^{\Gamma'}_+,\A^{\Gamma'}_-)\,\big)$. 
Therefore, 
\[  \bigcup_{\substack{\varnothing\neq \Gamma \text{ face of}\\   \conv(\A)}} \pi_\Gamma^{-1}\big(\nabla_{>0}(\A^\Gamma_+,\A^\Gamma_-)\big) = 
\bigcup_{\substack{\varnothing\neq \Gamma \text{ face of}\\   \conv(\A)}} \pi_\Gamma^{-1}\big(\,\overline{\nabla_{>0}(\A^\Gamma_+,\A^\Gamma_-)}\,\big)\, \supseteq \overline{\nabla_{>0}(\A_+,\A_-)}\,.  \]
We conclude that  \cref{eq:DiscUnion}  is a finite union of closed sets, hence is closed, showing part (i).

\smallskip
To show parts (ii) and (iii), we start with some common ingredients. For (ii), we consider $f\in \mathcal{S}(\A_+,\A_-)$ with  vector of nonsigned coefficients $c$, and for (iii) we consider the signomial $f$ with vector of coefficients $c\star \sigma$, with $\sigma$  as in \eqref{eq:sigma}. In both cases,  $f(x)=\langle c \star \sigma  , x^A \rangle$, so we have a signomial system where the coefficient matrix $B$ above has one row equal to $\mathbbm{1}\star \sigma$. 

Given $x\in \R^n_{>0}$, we let $u_x  \coloneqq \tfrac{1}{\sum_{a\in \mathcal{A}} x^a}> 0$ and consider the map
\[ \phi \colon \R^n_{>0} \rightarrow Y_{>0} \cap \Delta \qquad x \mapsto \varphi(u_x,x)\,. \]
As  the map   sending $x$ to $(u_x,x)$ is a homeomorphism onto its image, and $\varphi$ is a homeomorphism between $\R^{n+1}_{>0}$ and $Y_{>0}$ since $\mathcal{A}$ is full dimensional,  $\phi$ is also a homeomorphism, and hence $V_{>0}(f)$ and $\phi(V_{>0}(f))$ are homeomorphic. 
Observe that 
\begin{equation}\label{eq:phi}
y\in \phi(V_{>0}(f)) \quad \Leftrightarrow \quad y\in Y_{>0} \cap \Delta \text{ and }\langle c \star \sigma  , y \rangle=0\, . 
\end{equation}

Part (ii) follows if we show that $\phi(V_{>0}(f))$
is compact in $Y_{>0} \cap \Delta$. To this end, consider a sequence $\{y_k\}_{k\in \N} \subseteq \phi(V_{>0}(f))$ and the convergent constant sequence $\gamma_k=c$.  Using \eqref{eq:phi} for each $y_k$, the   fact ($\ast$) 
gives the existence of a subsequence of $\{y_k\}$ that converges to some $y\in Y_{\geq 0} \cap \Delta$ that satisfies 
$\langle c \star \sigma  , y \rangle=0$. 
Let $\Gamma$ be the nonempty face of $\conv(\A)$ satisfying \eqref{eq:toricproof} for this $y$,  and thus $\langle \pi_\Gamma(c \star \sigma)  , \pi_\Gamma(y) \rangle=0$.
As $\pi_\Gamma(y)$ is a vector with positive entries, $\pi_\Gamma(c \star \sigma)$ must have entries of both signs, and by the assumption that the boundary of $\conv(\A)$ contains no point of $\A_-$, this implies that $\Gamma=\conv(\A)$ again by \eqref{eq:toricproof}. Hence, $y \in Y_{>0} \cap \Delta$. Using \eqref{eq:phi} it follows that $y\in \phi(V_{>0}(f))$.
As every sequence in $\phi(V_{>0}(f))$ has a convergent subsequence,  the set is compact. This concludes the proof of part (ii).

Part (iii) is shown similarly. For every $k$, let $x_k\in V_{>0}(f_k)$, $y_k\coloneqq \phi(x_k) \in \phi(V_{>0}(f))$, and consider the  sequence $\{\gamma_k\}$. Using \eqref{eq:phi} and ($\ast$) ,  there exists $y\in Y_{\geq 0} \cap \Delta$ such that
$\langle c \star \sigma  , y \rangle=0$, and a nonempty face $\Gamma$ of $\conv(\A)$ such that $\langle \pi_\Gamma(c \star \sigma)  , \pi_\Gamma(y) \rangle=0$, and $\pi_\Gamma(y)\neq 0$ has positive entries.  As $c_a\neq 0$ if $a\in \vertices(\conv(\A))$ and   $\vertices(\Gamma)\subseteq \vertices(\conv(\A))$, we have that $\pi_\Gamma(c \star \sigma)\neq 0$ and hence it must have positive and negative entries. In particular, $c_b\neq 0$ for some $b\in \A_-$.
\end{proof}

If $\Gamma$ is a vertex of $\conv(\A)$, then $\nabla_{>0}(\A^\Gamma_+,\A^\Gamma_-)=\varnothing$. Hence it follows from \Cref{prop:relatively:compact:set} that any sequence in $\nabla_{>0}(\A_+,\A_-)$ will converge to some point in $\nabla_{>0}(\A^\Gamma_+,\A^\Gamma_-)$ for some face $\Gamma$ of dimension at least one. In particular,  the signed discriminant is closed in $\R^\A_{>0}$ if $\A$ has dimension one.

The signed $(\A_+,\A_-)$-discriminant is closely related to the positive $\A$-discriminant from~\cite{bihan2022new}, which is defined as the Euclidean closure in $(\R\setminus \{0\})^\A$ of the coefficient vectors of all signomials
with support $\A$ that have at least one positive singular zero. The crucial property of the positive $\A$-discriminant is that it detects the changes in the topology of the zero set of a signomial as the coefficients vary, as we explain in the following proposition.

\begin{proposition}[{\cite[Proposition 2.10]{bihan2022new}}]
\label[prop]{prop:Bihan} Let $\A_+,\A_-\subseteq \R^n$ be disjoint finite sets and let $\A=\A_+\cup\A_-$. If the nonsigned coefficients of $f,f'\in\mathcal{S}(\A_+,\A_-)$ lie in the same connected component of \[
\R^\A_{>0}\setminus \Big( \bigcup_{\substack{\varnothing\neq \Gamma \text{ face of }\\  \conv(\A) }} \pi_\Gamma^{-1}\big(\nabla_{>0}(\A^\Gamma_+,\A^\Gamma_-)\big) \Big)\,,
\]
then $V_{>0}(f)$ and $V_{>0}(f')$ are homeomorphic.
\end{proposition}
\begin{proof}
As by \Cref{prop:closure:Adiscr}, the set in \eqref{eq:DiscUnion} is closed,  the statement follows from restricting  \cite[Proposition 2.10]{bihan2022new}   to the orthant of $\R^{\A}$ determined by the map  $\sigma$ in \eqref{eq:sigma} and using the associated involution $\tau$ in \cref{eq_def_tau} that sends this orthant to $\R_{>0}^\A$. 
\end{proof}

 The signed $(\A_+,\A_-)$-discriminant is also related to the classical   $\A$-discriminant from \cite{gelfand}  for a finite set $\A\subseteq\Z^n$. 
 In this case, we consider   Laurent polynomials $f$ with support $\A$, and let 
$\Sing(f)$ be the set of zeros of the critical system \eqref{eq:crit:system:f} of $f$ in $(\C^*)^n$. 
Then the $\A$-discriminant is the set
\[
\nabla_\C(\A)\coloneqq\Zar\left(\left\{c\in\C^\A\:\colon\:\Sing\left(\sum_{a\in \A}c_a\, x^a\right)\neq \varnothing\right\}\right)\,,
\]
where  Zar refers to the Zariski closure in $\C^A$. 
Using this, we determine the Zariski closure of $\nabla_{>0}(\A_+,\A_-)$ for full dimensional signed supports with integer exponents.

\begin{proposition}\label[prop]{prop:zariski:closure:Adiscr}
Let $\A_+,\A_-\subseteq \Z^n$ be disjoint finite sets such that $\A=\A_+\cup\A_-$ is full dimensional. Let $\tau$ be the associated involution from \cref{eq_def_tau}. If $\nabla_{>0}(\A_+,\A_-)\neq \varnothing$, then 
\[ \tau(\nabla_{\C}(\A)) = \Zar(\nabla_{>0}(\A_+,\A_-))\,, \]
where  the Zariski closure is taken in $\C^\A$.
\end{proposition}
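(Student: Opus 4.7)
The plan is to lift the problem to the complex incidence variety of the signed critical system,
\[\mathcal{I}_\C := \{(c,x)\in \C^\A \times (\C^*)^n : C(c\star x^A)=0\},\]
and to show that its positive real part (the complexification of \eqref{eq_positive_incidence_variety}) is Zariski dense in $\mathcal{I}_\C$. Let $\pi \colon \C^\A \times (\C^*)^n \to \C^\A$ denote the first projection. The general identity $Zar(\pi(S)) = Zar(\pi(Zar(S)))$, valid for any algebraic projection $\pi$, will then give $Zar(\nabla_{>0}(\A_+,\A_-)) = Zar(\pi(\mathcal{I}_\C))$.

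The right-hand side is identified with $\tau(\nabla_\C(\A))$ as follows. Since $C = \hat A\,\diag(\sigma)$ and $\tau(c) = \sigma\star c$, the equation $C(c\star x^A)=0$ is equivalent to $\hat A(\tau(c)\star x^A)=0$, i.e.\ to the critical system of the Laurent polynomial $\sum_a \tau(c)_a\,x^a$. Writing $\mathcal{I}_\C^{\mathrm{uns}}$ for its complex incidence variety, we have $\pi(\mathcal{I}_\C)=\tau(\pi(\mathcal{I}_\C^{\mathrm{uns}}))$, and since $\tau$ is a linear automorphism of $\C^\A$ it commutes with Zariski closure. By the definition of $\nabla_\C(\A)$, this yields $Zar(\pi(\mathcal{I}_\C))=\tau(\nabla_\C(\A))$.

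The main step is the density claim. Here the vertically parametrised structure of $F$ highlighted via \cite{feliu_henriksson_pascualescudero_generic_consistency} is decisive: it provides the explicit algebraic isomorphism
\[\Phi\colon \ker(C)\times (\C^*)^n \;\xrightarrow{\;\sim\;}\; \mathcal{I}_\C,\qquad (v,x)\mapsto (v\star x^{-A},\,x),\]
with inverse $(c,x)\mapsto (c\star x^A,\,x)$. Under $\Phi$, the positive incidence variety corresponds to $(\ker_\R(C)\cap \R^\A_{>0})\times \R^n_{>0}$. The hypothesis $\nabla_{>0}(\A_+,\A_-)\neq\varnothing$ makes the first factor nonempty, hence a nonempty Euclidean-open subset of the real vector space $\ker_\R(C)$; such a subset is Zariski dense in the complex space $\ker(C)$ because any polynomial on a real linear space vanishing on a Euclidean-open set must be zero. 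Combined with the standard density of $\R^n_{>0}$ in $(\C^*)^n$ and the fact that a product of Zariski dense subsets of irreducible factors is Zariski dense in the product, this yields the required density of $\mathcal{I}_{>0}(\A_+,\A_-)$ in $\mathcal{I}_\C$. The main obstacle is producing the trivialisation $\Phi$; once in place, the rest is an elementary application of the principle that a nonempty Euclidean-open semialgebraic subset of a complex linear space is Zariski dense.
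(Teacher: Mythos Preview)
Your argument is correct and follows essentially the same route as the paper: both reduce to showing that $\mathcal{I}_{>0}(\A_+,\A_-)$ is Zariski dense in the complex incidence variety, then push forward via $\pi$ using $Zar(\pi(S))=Zar(\pi(Zar(S)))$ and identify the result with $\tau(\nabla_\C(\A))$. The only difference is cosmetic: where the paper cites \cite[Theorem~3.1 and Remark~2.9]{feliu_henriksson_pascualescudero_generic_consistency} as black boxes for irreducibility and density, you unpack those results by writing down the explicit trivialisation $\Phi\colon \ker(C)\times(\C^*)^n\xrightarrow{\sim}\mathcal{I}_\C$ and arguing density directly on the factors.
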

\begin{proof} 
Let $F$ be defined as in \cref{eq:crit:syst:matrix:form} and extended over $\C^\A\times(\C^*)^n$, and consider the incidence variety $\mathcal{I}(\A)\coloneqq\{(c,x)\in \C^\A\times(\C^*)^n\:\colon\:F(c,x)=0\}$ and the projection $\pi\colon\C^\A\times\C^n\to\C^\A$. By definition of $\nabla_\C(\A)$, we have \[\nabla_\C(\A)=\Zar(\tau(\pi(\mathcal{I}(\A))))=\tau(\Zar(\pi(\mathcal{I}(\A)))).\] 
As $\conv(\A)$ is full dimensional, the matrix $C$ from \cref{eq:crit:syst:matrix:form} has full row rank. 
As $F$ is a vertically parametrized system in the sense of \cite{feliu_henriksson_pascualescudero_generic_consistency}, it follows   from  \cite[Theorem 3.1]{feliu_henriksson_pascualescudero_generic_consistency} that $\mathcal{I}(\A)$ is irreducible and nonsingular. Since $\nabla_{>0}(\A_+,\A_-)\neq \varnothing$, we must have $\mathcal{I}_{>0}(\A_+,\A_-)\neq \varnothing$, and hence it follows from \cite[Remark 2.9]{feliu_henriksson_pascualescudero_generic_consistency} that $\mathcal{I}_{>0}(\A_+,\A_-)$ is Zariski dense in $\mathcal{I}(\A)$. With this, we obtain  
\begin{align*}
        \Zar(\nabla_{>0}(\A_+,\A_-)) & =\Zar(\pi(\mathcal{I}_{>0}(\A_+,\A_-)))=\Zar(\pi(\Zar(\mathcal{I}_{>0}(\A_+,\A_-))))\\
        &=\Zar(\pi(\mathcal{I}(\A)))=\tau(\nabla_\C(\A))\,,
    \end{align*}  
    where in the second equality we use that $\pi$ is a continuous map.
\end{proof}

We conclude this subsection with  a useful lemma and remark that  will be used repeatedly throughout this work.

\begin{lemma}\label[lem]{lemma:affine:transformation:Adiscriminant}
 Let $\A_+,\A_-\subseteq \R^n$ be disjoint finite sets, $\A=\A_+ \cup \A_-$, 
 and $\psi$ be an affine transformation on $\R^n$. For  $f=\sum_{a\in\A_+}c_a\, x^a-\sum_{b\in\A_-}c_bx\, ^b\in \mathcal{S}(\A_+,\A_-)$, consider the signomial
    \[
    f^{(\psi)}\coloneqq\sum_{a\in \A_+}c_a\, x^{\psi(a)}-\sum_{b\in \A_-}c_b\, x^{\psi(b)} \quad \in\   \mathcal{S}(\psi(\A_+),\psi(\A_-)) \, . 
    \]
    Then, the following statements hold:
    \begin{itemize}
          \item $\Sing_{>0}(f)\neq\varnothing$ if and only if $\Sing_{>0}(f^{(\psi)})\neq\varnothing$. 
        \item $f$ is copositive if and only if $f^{(\psi)}$ is.
\item With the natural identification of $\R^{\A}_{>0}$ with $\R^{\psi(\A)}_{>0}$ it holds that
    \[ \mathcal{S}(\A_+,\A_-)=\mathcal{S}(\psi(\A_+),\psi(\A_-)) \,\qquad\text{and}\qquad  \nabla_{>0}(\A_+,\A_-)=\nabla_{>0}(\psi(\A_+),\psi(\A_-)) \, . \]
    \end{itemize}
\end{lemma}
\begin{proof}
Let  $M\in\R^{n\times n}$ be an invertible matrix and $v\in \R^n$ be such that $\psi(a) = Ma + v$ for every $a \in \mathbb{R}^n$.
The statements follow from the equality  $f^{(\psi)}(x)=x^{v}  f(x^{M})$, and from the fact that for $x\in \R^n_{>0}$ such that $f^{(\psi)}(x)=0$, it holds that $J_{f^{(\psi)}}(x) = x^v J_f(x^M) \diag(x^M) M^\top \diag(x^{-1})$. See also \cite[Proposition 2.3]{Mate2024}. 
\end{proof}

\begin{remark}\label[rmk]{rk:affine:transformation:lower:dimension}
By \Cref{lemma:affine:transformation:Adiscriminant}, when studying $\nabla_{>0}(\A_+,\A_-)$, we can assume without loss of generality that $\A=\A_+\cup \A_-$ is full dimensional. Indeed, if $\A$ has dimension $d<n$, there exists an affine transformation such that $\psi(\A)\subseteq\{x\in\R^n\:\colon\:x_{d+1}=\cdots=x_n=0\}\cong\R^d$. The signed discriminant is invariant under this transformation. 
Furthermore $f^{(\psi)}$ depends only on $x_{1},\dots,x_d$ and hence can be regarded as a signomial in $\R^d$, which has  full dimensional support. 
\end{remark}

\subsection{A criterion for copositivity}\label{subsec:copositivity:criterion}
In this subsection we state and prove one of the main results of this work: a criterion to determine whether a signomial is copositive. 

Let $\A_+,\A_- \subseteq \R^n$ be disjoint finite sets and let    $\A=\A_+\cup \A_-$. A map   $h\colon\A\to \R_{\geq 0}$ is called a \term{height function lifting $\A_-$} if
\begin{equation}\label{eq:h}
h(a)>0 \quad\text{if }a\in \A_- \qquad \text{and}\qquad h(a)=0 \quad\text{if }a\notin \A_-\, . 
\end{equation}
For any such $h$ and $f\in\mathcal{S}(\A_+,\A_-)$ with vector of nonsigned coefficients $c$,  we consider the path $\{ c\star t^h\:\colon\: t\in \R_{>0}\}$ in $\R^{\A}_{>0}$. The signomials with vector of nonsigned coefficients $c\star t^h$, namely,
\begin{equation}\label{eq:Viro}
f_{t}(x) \coloneqq\sum_{a\in \A_+}c_a\,x^a-\sum_{b\in\A_-}c_b\, t^{h(b)}x^b \, , 
\end{equation}
define  a path in $\mathcal{S}(\A_+,\A_-)$. Perturbations of signomials as in \eqref{eq:Viro} have been commonly used in real algebraic geometry, where  they for example play a crucial role in Viro's patchworking argument \cite{ViroThesis}.

The following theorem characterizes copositivity for all signomials whose signed support satisfies that $\vertices(\conv(\A))\subseteq \A_+$; equivalently, $\A_-\subseteq\conv(\A_+)$. Note that any signomial not satisfying this assumption cannot be copositive by \Cref{lemma:copositivity:signs:truncations}.

\begin{theorem}\label[thm]{thm:criterion:copositivity}
Let $\A_+,\A_- \subseteq \R^n$ be disjoint finite sets and let $\A=\A_+\cup \A_-$. Assume that $\A_-\neq \varnothing$ and $\A_-\subseteq\conv(\A_+)$. 
Let
\[
\mathcal{F}\coloneqq\{\,\Gamma\text{ face of }\conv(\A): \Gamma\cap\A_-\neq\varnothing \text{ and }\Gamma \text{ is the smallest face containing }\Gamma\cap\A_-\, \}\:.
\]
 Let $f\in\mathcal{S}(\A_+,\A_-)$  have vector of nonsigned coefficients $c\in \R^{\A}_{>0}$,  
 and  let $h\colon \A\to \R_{\geq 0}$ be a height function lifting $\A_-$. Consider the set 
\[
T\coloneqq\bigcup_{\Gamma\in \mathcal{F}}\Big\{t\in \R_{>0} \, :  \pi_{\Gamma}(c\star t^h) \in \nabla_{>0}(\A_+^\Gamma,A_-^\Gamma)\Big\}\,.
 \] 
 Then the following statements hold:
 \begin{enumerate}[label=(\roman*)]
     \item $T$ has a minimum   $t_*$, which also satisfies that $[t_*,\infty)=\{t\in \R_{> 0} : V_{>0}(f_t)\neq \varnothing\}$.
     \item $f$ is copositive if and only if $t_*\geq 1$.
 \end{enumerate}
 \end{theorem}
 \begin{proof}
Observe that the set in \eqref{eq:DiscUnion} satisfies
\begin{equation}\label{eq:U}
D\coloneqq \bigcup_{\substack{\varnothing\neq \Gamma \text{ face of}\\   \conv(\A)}} \pi_\Gamma^{-1}\big(\nabla_{>0}(\A^\Gamma_+,\A^\Gamma_-)\big)
= \bigcup_{\substack{\Gamma \in \mathcal{F}}} \pi_\Gamma^{-1}\big(\nabla_{>0}(\A^\Gamma_+,\A^\Gamma_-)\big)\,, 
\end{equation}
as $\nabla_{>0}(\A^\Gamma_+,\A^\Gamma_-)=\varnothing$ if $\Gamma \notin \mathcal{F}$. This is immediate if $\Gamma\cap \A_-=\varnothing$, and if $\varnothing\neq \Gamma\cap \A_-$ is contained in a proper face of $\Gamma$, this follows as 
$\Gamma\cap \A$ 
satisfies \cite[Theorem 3.3(iii)]{Mate2024}.

We conclude that $t\in T$ if and only if $c\star t^{h}\in D$. 
By letting $\rho_c\coloneqq \{ c\star t^{h} :  t\in \R_{>0}\}$, we have in particular  that $T$ is the preimage of  $D \cap  \rho_c$ by the continuous map $\R_{>0} \rightarrow  \R_{>0}^{\A}$ sending $t$ to $c\star t^h$. 
As $D$ is closed by \cref{prop:closure:Adiscr}(i) and $\rho_c$ is  closed in $\R^\A_{>0}$, so is $T$. 
To  show that $T$ has a minimum, it is thus enough to show that $T$ is bounded from below and nonempty.  

First, we show that $V_{>0}(f_t)\neq \varnothing$ for all $t>0$ large enough. On one hand, since all vertices of $\conv(\A)$ belong to $\A_+$, \Cref{lemma:copositivity:signs:truncations}
tells us that for all  $t>0$, there exists $y\in \R^n_{>0}$ such that $f_t(y)>0$. 
On the other hand, we have that
\[f_t(\mathbbm{1})=\sum_{a\in\A_+}c_a-\sum_{b\in \A_{-}}c_b\, t^{h(b)}\]
 tends to $-\infty$ for $t\to \infty$. So, there exists $t_\infty\in \R_{>0}$ such that for all $t>t_\infty$ it holds that $f_{t}(\mathbbm{1})<0$ and $f_{t}(y)>0$ for some $y$. By continuity,   $V_{>0}(f_{t})\neq \varnothing$ for all $t>t_\infty$.

Consider
\[ N\coloneqq \{t\in \R_{> 0} : V_{>0}(f_t)\neq \varnothing\}\,,\qquad   t_0 \coloneqq {\rm inf}\,N \geq 0  \,, \]
 and note that $t_0$ exists as we have already seen that $N\neq \varnothing$. 
The set $N$ is an interval of the form 
$[t_0,\infty)$ or 
$(t_0,\infty)$. Indeed, if  $t>t_0$, then let $t'$ be such that $t_0<t'<t$ and $t'\in N$, and let $x\in \R^n_{>0}$ satisfy $f_{t'}(x)=0$. Using that $f_{t}(x)<f_{t'}(x)=0$, 
we conclude that $f_t$ attains negative values. Since $f_t$ also attains positive values, $V_{>0}(f_t)\neq 0$ and hence $t\in N$. 

We show now that $t_0>0$. 
Consider a   decreasing sequence $\{t_k\}_{k\in \N}$ converging to $t_0$ and the sequence $\gamma_k\coloneqq c\star t^h_k$ converging to $c\star t_0^h\in \R^\A_{\geq 0}$ (where  $0^0=1$ if $t_0=0$). 
As $V_{>0}(f_{t_k})\neq \varnothing$ and 
$(\gamma_k)_a\neq 0$ for all $a\in \in \vertices(\conv(A))\subseteq \A_+$,    \Cref{prop:relatively:compact:set}(iii) gives that $(c\star t_0^h)_b\neq 0$ for some $b\in \A_-$. As $h(b)>0$, it follows that  $t_0> 0$, 
and hence $V_{>0}(f_t)=\varnothing$ for all $0<t< t_0$. This gives in particular that $T$ is bounded from below by $t_0$. 

That   $T\neq \varnothing$   follows now from  \Cref{prop:Bihan}, as the topology of $V_{>0}(f_t)$
changes along the path $\rho_c$: it differs for $t<t_0$ and $t>t_\infty$. This concludes the proof of the first part (i). 

\smallskip
For the second part of (i),  clearly $t_0\leq t_*$. 
By definition of $t_*$, $c\star t^h \notin D$ for all $0<t<t_*$  and hence, 
 by \Cref{prop:Bihan},  $V_{>0}(f_t)=\varnothing$ for all $t<t_*$, as this is the case for $t$ small enough. This gives that $t_*\leq t_0$. Putting the two inequalities together we obtain that $t_*= t_0$ and as $t_*\in \{t\in \R_{>0} : V_{>0}(f_t)\neq \varnothing\}$, part (i) follows.

\smallskip
  Part (ii) is a consequence of the   equivalences
  \begin{align}
      \text{$f$ is copositive}\quad & 
      \Leftrightarrow \quad \text{$f_{t}$ is positive on $\R^n_{>0}$ for all $t<1$} \label{eq:equiv1} \\
      & \Leftrightarrow \quad \text{$V_{>0}(f_{t})=\varnothing$ for all   $t<1$} \label{eq:equiv2} \\
      & \Leftrightarrow \quad t_*\geq 1\,   \label{eq:equiv4}
  \end{align}
  derived as follows. For \eqref{eq:equiv1}, the choice of $h$ gives that $f_t(x)>f(x)$  for all $t<1$ and $x\in \R^n_{>0}$ and the equivalence follows by continuity of $f_t$ with respect to $t$. 
In \eqref{eq:equiv2}, the forward implication is clear. For the reverse implication, if $f_t$ is not positive for some $t<1$, then  $V_{>0}(f_t)\neq \varnothing$ as   $f_t$  attains positive values. 
Finally, \eqref{eq:equiv4} holds as $t_*=t_0$. 
 \end{proof}

We will discuss in \Cref{sec:computational} how to determine $t_*$ from \Cref{thm:criterion:copositivity} in practice, but note that finding $t_*$   amounts to solving finitely many critical systems in the variables $(t,x)$, one for each   $\Gamma$  in $\mathcal{F}$. 
 When $\A_-\subseteq \relint(\conv(\A_+))$, we have that  $\nabla_{>0}(\A_+^\Gamma,A_-^\Gamma)\neq \varnothing$ only for $\Gamma=\conv(\A)$, and the criterion in  \Cref{thm:criterion:copositivity} gets simplified in the sense that $t_*$ can be found by solving only one system. This is covered in the next corollary.

\begin{corollary}\label[cor]{cor:copositivity:interior}
    Let $\A_+,\A_- \subseteq \R^n$ be disjoint finite sets and let $\A=\A_+\cup \A_-$. Assume that $\A_-\neq \varnothing$ and $\A_-\subseteq\relint(\conv(\A_+))$. 
    Let $f\in\mathcal{S}(\A_+,\A_-)$  have vector of nonsigned coefficients $c\in \R^{\A}_{>0}$ and 
    let $h\colon \A\to \Z$ be a height function lifting $\A_-$. Consider the set 
\[
T\coloneqq\big\{t\in \R_{>0} :  c\star t^h \in \nabla_{>0}(\A_+,\A_-) \big\} \,.
 \] 
 Then $T$ has a minimum $t_*$, and $f$ is copositive if and only if $t_*\geq 1$.
\end{corollary}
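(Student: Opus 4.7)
The plan is to derive the corollary as a direct specialization of \Cref{thm:criterion:copositivity} by verifying that, under the stronger hypothesis $\A_-\subseteq\relint(\conv(\A))$, the face $\Gamma$ and the index set $J$ appearing there both collapse to a single element, namely $\conv(\A)$ itself. No new ingredients beyond \Cref{thm:criterion:copositivity} are needed.

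First, I would identify the smallest face $\Gamma$ of $\conv(\A)$ containing $\A_-$. A standard fact from convex geometry is that every proper face $\Gamma'$ of a polytope is disjoint from its relative interior. Since $\A_-\neq\varnothing$ and $\A_-\subseteq\relint(\conv(\A))$, no proper face can contain $\A_-$, so $\Gamma=\conv(\A)$. The same argument shows that for any proper face $\Gamma'$ of $\Gamma$, $\Gamma'\cap\A_-\subseteq\Gamma'\cap\relint(\conv(\A))=\varnothing$. Consequently the index set from \Cref{thm:criterion:copositivity} reduces to the singleton $J=\{\conv(\A)\}$.

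With $J=\{\conv(\A)\}$, the truncation at the unique element of $J$ is trivial: $(\A_+^{\Gamma},\A_-^{\Gamma})=(\A_+,\A_-)$, the critical system $F^{\Gamma}$ coincides with $F$, and the projection $\pi_{\Gamma}\colon\R^\A_{>0}\to\R^{\A^\Gamma}_{>0}$ is the identity. Therefore the set
\[
S=\{t\in\R_{>0}\,:\,F(c\star t^h,x)=0 \text{ for some } x\in\R^n_{>0}\}
\]
defined in the corollary coincides verbatim with the set $S$ of \Cref{thm:criterion:copositivity}, and the conclusions (i) and (ii) transfer without modification.

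The main (and essentially only) obstacle is the combinatorial observation that $\A_-\subseteq\relint(\conv(\A))$ forces $J$ to be a singleton, which is immediate from the disjointness of a proper face and the relative interior. Once this is noted, the proof is a one-line invocation of \Cref{thm:criterion:copositivity}.
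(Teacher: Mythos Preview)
Your proposal is correct and matches the paper's approach: the paper states the corollary immediately after \Cref{thm:criterion:copositivity} without a separate proof, treating it as the obvious specialization obtained when $\A_-\subseteq\relint(\conv(\A))$ forces $\Gamma=\conv(\A)$ and $J=\{\conv(\A)\}$. Your verification that proper faces miss the relative interior, and hence that the union over $J$ collapses to the single critical system $F$, is exactly the intended argument.
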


The copositivity criterion from \Cref{thm:criterion:copositivity} and \Cref{cor:copositivity:interior} can be combined with elimination techniques to find  conditions for copositivity on the nonsigned coefficients of polynomials with given signed support.. 
This is illustrated in the next examples.   First, we recall that for points $a_1,\dots,a_{n+1}\in \R^n$ whose convex hull is a full dimensional simplex and $b\in \conv(\{a_1,\dots,a_{n+1}\})$, the \term{barycentric coordinates} of $b$ with respect to  $\{a_1,\dots,a_{n+1}\}$ are the unique 
$\lambda^b_{a_1},\dots,\lambda^b_{a_{n+1}}\in [0,1]$ such that
\[ b = \sum_{i=1}^{n+1} \lambda^b_{a_i} a_i\,, \qquad \ \sum_{i=1}^{n+1} \lambda^b_{a_i} =1\,  . \]

\begin{remark}\label[rmk]{rk:barycentric}
Given a vector $v\in\R^n$, let $\hat{v}\in\R^{n+1}$ be the vector obtained by adding  a $1$ at the top of $v$.
Given affinely independent points $a_1,\dots,a_{n+1}\in \R^n$, there exists an invertible square matrix $M$ of size $n+1$ sending $\hat{a}_i$ to the $i$-th canonical vector of $\R^{n+1}$. 
For any  $b\in \conv(\{a_1,\dots,a_{n+1}\})$, the barycentric coordinates of $b$ satisfy 
$\hat{b}=  \sum_{i=1}^{n+1} \lambda^b_{a_i} \hat{a}_i$, and hence, $M\hat{b}=(\lambda^b_{a_1},\dots,\lambda^b_{a_{n+1}})^\top$. In particular, this gives that  the row reduced  echelon form of the matrix $C$ in \cref{eq:crit:syst:matrix:form} of the critical system when $\A_+ = \{a_1,\dots,a_{n+1}\}$ and $\A_-=\{b\}$   is
    \[
   \begin{pmatrix}[c|c]
        {\rm Id}_{n+1} &   -\gamma
    \end{pmatrix}\in \R^{(n+1)\times (n+2)}\, \qquad \gamma =(\lambda^{b}_{a_1},\dots,\lambda^{b}_{a_{n+1}})^\top.  
    \]

Additionally, as $\{a_1,\dots,a_{n+1}\}$ are affinely independent, the matrix $N\in \R^{n\times n}$ whose $i$-th column is given by $[a_{i+1}-a_1]$ is invertible. Then, the affine transformation $\psi(a)\coloneqq N^{-1}(a-a_1)$ sends $\psi(a_1)=0$ and $\psi(a_i)$ to the $(i-1$)-th canonical vector of $\R^n$. For $b\in \conv(\{a_1,\dots,a_n\})$, we obtain
\[
\psi(b)=N^{-1}(b-a_1)=N^{-1}\left(\sum_{i=1}^{n+1}\lambda_{a_i}^b(a_i-a_1)\right)=\sum_{i=1}^{n+1}\lambda_{a_i}^bN^{-1}(a_i-a_1)=(\lambda_{a_2}^b,\dots,\lambda_{a_{n+1}}^b)\,.
\]
\end{remark}

We discuss now two examples to illustrate how \Cref{cor:copositivity:interior} can be used to obtain closed formulas on the nonsigned coefficients that characterize if a signomial is copositive. First, we consider the simplest signed support and rederive the well-known circuit number. Second, we show that signomials whose support is a square with an interior point at its barycenter also admit a nice closed formula for copositivity in terms of circuit numbers. 

\begin{example}[The circuit number] \label[ex]{example:circuit:number}
We consider $n$-variate signomials with full dimensional signed support $(\A_+,\A_-)$ such that $\A_+$ is the set of vertices of a simplex and $\A_-$ consists of one point in the interior of $\conv(\A_+)$. Whether such a signomial is copositive is determined by the circuit number \cite{craciun_pantea_koeppl,wolff_nonneg}, which we rederive now using \Cref{cor:copositivity:interior}. 

By \Cref{lemma:affine:transformation:Adiscriminant}, copositivity is invariant under affine transformations of the signed support. Therefore, using the affine transformation $\psi$ from \Cref{rk:barycentric}, we can assume that
 \[
 f=c_1+\sum_{i=2}^{n+1}c_{i}x_{i-1}-d\, x^\gamma \qquad \gamma=(\lambda^{b}_{a_2},\dots,\lambda^{b}_{a_{n+1}})\,.
 \]
 
 We know from \Cref{cor:copositivity:interior} that the critical system
 \begin{align*}
 \big(c_1+c_2x_1+\dots+c_{n+1}x_n-d\,t\, x^\gamma\, , \, 
 c_2x_1-\lambda^b_{a_2} d\,t\,x^\gamma \, , \,  \dots \, , \,
 c_{n+1}x_n-\lambda^b_{a_{n+1}}d\,t\, x^\gamma \big) 
\end{align*}
 has at least one positive zero in the variables $(t,x_1,\dots,x_n)$ for all choices of nonsigned coefficients. 
Subtracting the last $n$ equations from the first one, we get the equality $d\,t=\tfrac{c_1}{\lambda^b_{a_1}}x^{-\gamma}$. This, together with the $(i+1)$-th equation and the properties of the barycentric coordinates, give that
\[
x_i=\frac{c_1\lambda^b_{a_{i+1}}}{\lambda^b_{a_1}} \quad \text{ for }i\in[n]\quad \text{ and }\quad t=\frac{1}{d}\prod_{i=1}^{n+1}\left(\frac{c_i}{\lambda^b_{a_i}}\right)^{\lambda^b_{a_i}}
\]
is the unique positive zero of the system. It follows from \Cref{cor:copositivity:interior} that $f$ is copositive if and only if the $t$-coordinate of this solution is larger or equal to 1,  equivalently
\begin{equation}\label{eq:circuit_number}
    d\leq \prod_{i=1}^{n+1}\left(\frac{c_i}{\lambda^b_{a_i}}\right)^{\lambda^b_{a_i}}=:\Theta\,,
\end{equation}
where $\Theta$ is known as the \emph{circuit number}.
\end{example}

\begin{example}[A square and its barycenter]\label[ex]{example:elimination:square}
We illustrate now that \Cref{cor:copositivity:interior} can be exploited to provide closed formulas for copositivity beyond the well-studied case revisited in \Cref{example:circuit:number}. 

Consider sets $\A_+,\A_-\subseteq\R^2$ where $\A_+$ consists of the four vertices of a square and $\A_-$ consists of the barycenter of the square.  
By \Cref{lemma:affine:transformation:Adiscriminant}, copositivity is invariant under affine transformations of the signed support,  so without loss of generality we assume that   $\A_+=\{(0,0),(2,0),(0,2),(2,2)\}$ and $\A_-=\{(1,1)\}$. The  pair $(\A_+,\A_-)$ satisfies the assumptions of \Cref{cor:copositivity:interior}. Any $f\in\mathcal{S}(\A_+,\A_-)$ is of the form
    \[
f =c_0+c_1x_1^{2}+c_2x_2^{2}+c_3x_1^{2}x_2^{2}-c_4x_1x_2
    \]
    with $c=(c_0,\dots,c_4)\in \R_{>0}^{5}.$ 
We consider the height function $h$ with $h(1,1)=1$ and zero otherwise. The entries of the critical system $F$ of $f$ can be regarded as polynomials in the variables $c_0,\dots,c_4,t,x_1,x_2$, and generate an ideal $I\subseteq \R[c_0,\dots,c_4,t,x_1,x_2]$. Using \texttt{Oscar} \cite{OSCAR}, 
we find that the elimination ideal $\widetilde{I} \coloneqq\big(I\colon\langle c_0c_1c_2c_3c_4x_1x_2\rangle^\infty\big) \cap \R[c,t]$ is generated by the polynomial 
\[
g \coloneqq c_4^4t^4-8(c_0c_4^2c_3+c_4^2c_1c_2)t^2+16(c_0^2c_3^2+c_1^2c_2^2-2c_0c_1c_2c_3)\quad  \in \R[c,t]\, .
\]
The four roots of $g$ in $t$ are the square roots of 
    \[
    t_1 =\frac{4(c_0c_3+c_1c_2)+ 8\sqrt{c_0c_1c_2c_3}}{c_4^2}\quad \text{ and } \quad t_2 =\frac{4(c_0c_3+c_1c_2)- 8\sqrt{c_0c_1c_2c_3}}{c_4^2} \, .
    \]
    For positive $c\in \R^5_{>0}$, $t_1$ is positive. By the arithmetic-geometric mean   inequality, $\sqrt{c_0c_1c_2c_3}\leq \tfrac{1}{2} (c_0c_3+c_1c_2)$, so $t_2$ is nonnegative. 
   Therefore, $g$ may have up to two positive roots 
\[ t_+\coloneqq\sqrt{t_1}\quad \text{ and } \quad t_-\coloneqq\sqrt{t_2}\, . \] 
By \Cref{cor:copositivity:interior}, at least one of $t_+,t_-$ must extend to a positive zero of   $F(c\star t^h, x)$ and  $t_*\in\{t_-,t_+\}$. 
At this point, we can assert  that  a signomial $f\in \mathcal{S}(\A_+,\A_-)$ with vector of nonsigned coefficients $c\in \R^5_{>0}$ is copositive if $ t_-\geq 1$, that is, if
\[
c_4^2\leq 4(c_0c_3+c_1c_2)- 8\sqrt{c_0c_1c_2c_3}\, ,
\] 
and is not copositive if $1> t_+$, that is, if
\[
c_4^2> 4(c_0c_3+c_1c_2)+ 8\sqrt{c_0c_1c_2c_3}\, .
\]
After studying copositivity for nonseparable signed supports in \Cref{sec:nonseparable:supports}, we will show in \Cref{example:elimination:square:end} that, in fact, $f$ is copositive if and only if $t_+\geq 1$,  equivalently
\[
    c_4^2\leq 4(c_0c_3+c_1c_2)+ 8\sqrt{c_0c_1c_2c_3}=\big(2\sqrt{c_0c_3}+2\sqrt{c_1c_2}\big)^2\, \quad \Leftrightarrow \quad c_4\leq \sqrt{4c_0c_3}+\sqrt{4c_1c_2}\, . 
    \]
The summands of the last inequality  are the circuit numbers of the 1-dimensional circuits defined by the diagonals of the square. Hence,   $f$ is copositive if and only if $c_4$  is at most the sum of the  circuit numbers of the diagonals.
 \end{example}

\section{Nonseparable signed supports}\label{sec:nonseparable:supports}

In this section we show that a class of signed supports, termed \emph{nonseparable}, satisfy that all copositive signomials admit a SONC decomposition. This is done in \Cref{thm:nonseparable:signed:supports:SONC} in \Cref{subsec:thm_SONC}. 
Before that, we start in \Cref{subsec:geometry:nonseparable} by reviewing the concept of nonseparability, which we learnt from the upcoming work  \cite{Mate&Timo2025},  and focus on the geometric properties of nonseparable signed supports. In \Cref{subsec:SONC:supports}, we discuss properties of the SONC cone and introduce SONC signed supports. In \Cref{subsec:decomposing:critical:system}, the properties of nonseparable signed supports are used together with \Cref{cor:copositivity:interior} to find SONC decompositions. The latter are based on  decompositions of the critical system and extend ideas from \cite{Wang_Nonneg}. 

\subsection{The geometry of nonseparable pairs}\label{subsec:geometry:nonseparable} 
We start by recalling a few concepts from polyhedral geometry that we use in what follows. A  hyperplane in $\R^d$ can be written as 
\[
\mathcal{H}_{v,w}\coloneqq\{x\in\R^d\:\colon\:\langle x,v\rangle+w=0\}
\]
for some $v\in\R^d$ and $w\in\R$, and defines two half-spaces of $\R^d$, 
\[ \mathcal{H}_{v,w}^+\coloneqq\{x\in\R^d\:\colon\:\langle x,v\rangle+w\geq 0\}\quad  \text{and} \quad \mathcal{H}_{v,w}^-\coloneqq\{x\in\R^d\:\colon\:\langle x,v\rangle+w \leq 0\}\, .\] 
For a polytope $P\subseteq\R^d$  and a face $\Gamma$  of $P$, a \term{supporting hyperplane} of $\Gamma$ with respect to $P$ is a hyperplane $\mathcal{H}_{v,w}$ such that $P\cap\mathcal{H}_{v,w}=\Gamma$ and $P\subseteq \mathcal{H}_{v,w}^+$. 

Given a collection of polyhedral complexes $P_1,\dots, P_k$, its \term{common refinement} is the polyhedral complex   $\{G_1\cap\dots\cap G_k\:\colon\:G_i\in 
P_i\}$. The \term{d-cells} of a polyhedral complex $P$ are the polyhedra in $P$ that have dimension $d$.

\begin{definition}[Adapted from \cite{Mate&Timo2025}]\label[def]{def:nonseparable:pairs}
Let $\A_+,\A_-\subseteq \R^n$ be disjoint finite sets such that $ \A_+\cup \A_-$ has dimension $d$ and $\A_-\subseteq\conv(\A_+)$.  
\begin{itemize}
\item We denote by $\mR(\A_+)$  the common refinement of all regular polyhedral subdivisions of $\A_+$. 
We let $\mR_d(\A_+)$ denote the set of $d$-cells of $\mR(\A_+)$. 
\item The pair $(\A_+,\A_-)$ is called \term{nonseparable} if 
\begin{equation}\label{eq:sep}
\text{there exists $D\in\mR_d(\A_+)$ such that $\A_-\subseteq D$}\, .
\end{equation} 
\item If \Cref{eq:sep} does not hold,  then $(\A_+,\A_-)$ is said to be \term{separable}.
\end{itemize}
\end{definition}

Observe that a pair $(\A_+,\A_-)$ is nonseparable if and only if $(\psi(\A_+),\psi(\A_-))$ is nonseparable for any affine transformation $\psi$.

\begin{remark}\label{rk:triangulation}
The set of $d$-cells of  $\mR(\A_+)$ agrees with the set of $d$-cells in the common refinement of \emph{all} polyhedral subdivisions of $\A_+$, that is, of not only the regular ones, and also in the common refinement of all triangulations of $\A_+$. Therefore, one can consider any of these alternative descriptions of  $\mR(\A_+)$ in \Cref{def:nonseparable:pairs}. 
This is true since the $d$-cells in all three common refinements are intersections of $d$-dimensional simplices with vertices in $\A_+$, as each  such simplex is a  $d$-cell of some regular subdivision of $\A_+$. 
This also implies that if $D\in \mR_d(A_+)$, then for all simplices $\Delta$ with vertices in $\A_+$, it holds that either
   \[\relint(D)\subseteq \relint(\Delta)\quad \text{or}\quad \relint(D)\cap \relint(\Delta)=\varnothing \,. \]
\end{remark}

\begin{figure}
    \centering
    \begin{tikzpicture}[scale=1.2]

  \foreach \i in {1,...,5} {
    \coordinate (P\i) at ({90 + (\i-1)*72}:1);
  }

  \foreach \i/\j in {1/2,1/3,1/4,1/5,2/3,2/4,2/5,3/4,3/5,4/5} {
    \draw[name path=E\i\j, black] (P\i) -- (P\j);
  }
  \draw[thick,black] (P1) -- (P3);

  \path[name intersections={of=E13 and E24, by=X1324}];
  \path[name intersections={of=E13 and E25, by=X1325}];
  \path[name intersections={of=E14 and E35, by=X1435}];
  \path[name intersections={of=E14 and E25, by=X1425}];
  \path[name intersections={of=E24 and E35, by=X2435}];

  \fill[blue!20]    (P1) -- (P2) -- (X1325) --  cycle;
  \fill[cyan!20]    (P1) -- (X1425) -- (P5) -- cycle;
  \fill[purple!20]  (P3) -- (P4) -- (X2435) -- cycle;
  \fill[green!20]   (P4) -- (P5) -- (X1435) --  cycle;
  \fill[orange!20]  (P5) -- (P1) --  (X1425) -- cycle;
  \fill[red!20]     (X1324) -- (P2) -- (P3) -- cycle;

  \fill[teal!20]    (P1) --  (X1325) --  (X1425) -- cycle;
  \fill[red!10]     (P2) -- (X1324) -- (X1325) -- cycle;
  \fill[magenta!10] (P3) -- (X1324) -- (X2435) -- cycle;
  \fill[yellow!20]  (P4) -- (X2435) -- (X1435) -- cycle;
  \fill[brown!20]    (P5) -- (X1435) -- (X1425) -- cycle;

  \fill[gray!20] (X1324) -- (X1325) -- (X1425) -- (X1435) -- (X2435) -- cycle;

  \foreach \i in {1,...,5} {
    \draw[red] (P\i) circle (1.5pt);
  }

  \fill[blue] (0.0,0) circle (1.5pt);

  \node at (0.0,-1.1) {nonseparable};
\node at (0.0,-0.2) {\tiny $D$};
\begin{scope}[xshift=3cm]

  \foreach \i in {1,...,5} {
    \coordinate (P\i) at ({90 + (\i-1)*72}:1);
  }

  \foreach \i/\j in {1/2,1/3,1/4,1/5,2/3,2/4,2/5,3/4,3/5,4/5} {
    \draw[name path=E\i\j, black] (P\i) -- (P\j);
  }
  \draw[thick,black] (P1) -- (P3);

  \path[name intersections={of=E13 and E24, by=X1324}];
  \path[name intersections={of=E13 and E25, by=X1325}];
  \path[name intersections={of=E14 and E35, by=X1435}];
  \path[name intersections={of=E14 and E25, by=X1425}];
  \path[name intersections={of=E24 and E35, by=X2435}];

  \fill[blue!20]    (P1) -- (P2) -- (X1325) --  cycle;
  \fill[cyan!20]    (P1) -- (X1425) -- (P5) -- cycle;
  \fill[purple!20]  (P3) -- (P4) -- (X2435) -- cycle;
  \fill[green!20]   (P4) -- (P5) -- (X1435) --  cycle;
  \fill[orange!20]  (P5) -- (P1) --  (X1425) -- cycle;
  \fill[red!20]     (X1324) -- (P2) -- (P3) -- cycle;

  \fill[teal!20]    (P1) --  (X1325) --  (X1425) -- cycle;
  \fill[red!10]     (P2) -- (X1324) -- (X1325) -- cycle;
  \fill[magenta!10] (P3) -- (X1324) -- (X2435) -- cycle;
  \fill[yellow!20]  (P4) -- (X2435) -- (X1435) -- cycle;
  \fill[brown!20]    (P5) -- (X1435) -- (X1425) -- cycle;

  \fill[gray!20] (X1324) -- (X1325) -- (X1425) -- (X1435) -- (X2435) -- cycle;

  \foreach \i in {1,...,5} {
    \draw[red] (P\i) circle (1.5pt);
  }

  \fill[blue] (0.0,0) circle (1.5pt);
  \fill[blue] (0.27,0.13) circle (1.5pt);
   \fill[blue] (0.0,0.3) circle (1.5pt);

  \node at (0.0,-1.1) {nonseparable};
  \node at (0.0,-0.2) {\tiny $D$};
\end{scope}
\begin{scope}[xshift=6cm]

  \foreach \i in {1,...,5} {
    \coordinate (P\i) at ({90 + (\i-1)*72}:1);
  }

  \foreach \i/\j in {1/2,1/3,1/4,1/5,2/3,2/4,2/5,3/4,3/5,4/5} {
    \draw[name path=E\i\j, black] (P\i) -- (P\j);
  }
  \draw[thick,black] (P1) -- (P3);

  \path[name intersections={of=E13 and E24, by=X1324}];
  \path[name intersections={of=E13 and E25, by=X1325}];
  \path[name intersections={of=E14 and E35, by=X1435}];
  \path[name intersections={of=E14 and E25, by=X1425}];
  \path[name intersections={of=E24 and E35, by=X2435}];

  \fill[blue!20]    (P1) -- (P2) -- (X1325) --  cycle;
  \fill[cyan!20]    (P1) -- (X1425) -- (P5) -- cycle;
  \fill[purple!20]  (P3) -- (P4) -- (X2435) -- cycle;
  \fill[green!20]   (P4) -- (P5) -- (X1435) --  cycle;
  \fill[orange!20]  (P5) -- (P1) --  (X1425) -- cycle;
  \fill[red!20]     (X1324) -- (P2) -- (P3) -- cycle;

  \fill[teal!20]    (P1) --  (X1325) --  (X1425) -- cycle;
  \fill[red!10]     (P2) -- (X1324) -- (X1325) -- cycle;
  \fill[magenta!10] (P3) -- (X1324) -- (X2435) -- cycle;
  \fill[yellow!20]  (P4) -- (X2435) -- (X1435) -- cycle;
  \fill[brown!20]    (P5) -- (X1435) -- (X1425) -- cycle;

  \fill[gray!20] (X1324) -- (X1325) -- (X1425) -- (X1435) -- (X2435) -- cycle;
  \foreach \i in {1,...,5} {
    \draw[red] (P\i) circle (1.5pt);
  }

  \fill[blue] (0.325,0.00) circle (1.5pt);
  \fill[blue] (0.265,0.16) circle (1.5pt);

  \node at (0.0,-1.1) {nonseparable};
  \node at (0.0,0.0) {\tiny $D_1$};
  \node at (0.5,0.2) {\tiny $D_2$};
\end{scope}
\begin{scope}[yshift=-2.6cm]

  \foreach \i in {1,...,5} {
    \coordinate (P\i) at ({90 + (\i-1)*72}:1);
  }

  \foreach \i/\j in {1/2,1/3,1/4,1/5,2/3,2/4,2/5,3/4,3/5,4/5} {
    \draw[name path=E\i\j, black] (P\i) -- (P\j);
  }
  \draw[thick,black] (P1) -- (P3);

  \path[name intersections={of=E13 and E24, by=X1324}];
  \path[name intersections={of=E13 and E25, by=X1325}];
  \path[name intersections={of=E14 and E35, by=X1435}];
  \path[name intersections={of=E14 and E25, by=X1425}];
  \path[name intersections={of=E24 and E35, by=X2435}];

  \fill[blue!20]    (P1) -- (P2) -- (X1325) --  cycle;
  \fill[cyan!20]    (P1) -- (X1425) -- (P5) -- cycle;
  \fill[purple!20]  (P3) -- (P4) -- (X2435) -- cycle;
  \fill[green!20]   (P4) -- (P5) -- (X1435) --  cycle;
  \fill[orange!20]  (P5) -- (P1) --  (X1425) -- cycle;
  \fill[red!20]     (X1324) -- (P2) -- (P3) -- cycle;

  \fill[teal!20]    (P1) --  (X1325) --  (X1425) -- cycle;
  \fill[red!10]     (P2) -- (X1324) -- (X1325) -- cycle;
  \fill[magenta!10] (P3) -- (X1324) -- (X2435) -- cycle;
  \fill[yellow!20]  (P4) -- (X2435) -- (X1435) -- cycle;
  \fill[brown!20]    (P5) -- (X1435) -- (X1425) -- cycle;

  \fill[gray!20] (X1324) -- (X1325) -- (X1425) -- (X1435) -- (X2435) -- cycle;

  \foreach \i in {1,...,5} {
    \draw[red] (P\i) circle (1.5pt);
  }

  \fill[blue] (-0.3,-0.6) circle (1.5pt);
  \fill[blue] (0.3,-0.6) circle (1.5pt);
  \fill[blue] (0,-0.82) circle (1.5pt);

  \node at (0.0,-1.1) {nonseparable};
  \node at (0.0,-0.62) {\tiny $D$};
\end{scope}

\begin{scope}[yshift=-2.6cm,xshift=3cm]

  \foreach \i in {1,...,5} {
    \coordinate (P\i) at ({90 + (\i-1)*72}:1);
  }

  \foreach \i/\j in {1/2,1/3,1/4,1/5,2/3,2/4,2/5,3/4,3/5,4/5} {
    \draw[name path=E\i\j, black] (P\i) -- (P\j);
  }
  \draw[thick,black] (P1) -- (P3);

  \path[name intersections={of=E13 and E24, by=X1324}];
  \path[name intersections={of=E13 and E25, by=X1325}];
  \path[name intersections={of=E14 and E35, by=X1435}];
  \path[name intersections={of=E14 and E25, by=X1425}];
  \path[name intersections={of=E24 and E35, by=X2435}];

  \fill[blue!20]    (P1) -- (P2) -- (X1325) --  cycle;
  \fill[cyan!20]    (P1) -- (X1425) -- (P5) -- cycle;
  \fill[purple!20]  (P3) -- (P4) -- (X2435) -- cycle;
  \fill[green!20]   (P4) -- (P5) -- (X1435) --  cycle;
  \fill[orange!20]  (P5) -- (P1) --  (X1425) -- cycle;
  \fill[red!20]     (X1324) -- (P2) -- (P3) -- cycle;

  \fill[teal!20]    (P1) --  (X1325) --  (X1425) -- cycle;
  \fill[red!10]     (P2) -- (X1324) -- (X1325) -- cycle;
  \fill[magenta!10] (P3) -- (X1324) -- (X2435) -- cycle;
  \fill[yellow!20]  (P4) -- (X2435) -- (X1435) -- cycle;
  \fill[brown!20]    (P5) -- (X1435) -- (X1425) -- cycle;

\path[name path=cut1425P3] (X1425) -- (P3);         
\path[name path=inneredge] (X1324) -- (X2435);
\path[name intersections={of=cut1425P3 and inneredge, by=I6}];

  \fill[gray!20] (X1324) -- (X1325) -- (X1425) -- (X1435) -- (X2435) -- cycle;

\draw[gray, very thin] (X1425) -- (P3);

\fill[violet!30] (X1425) -- (X1435) -- (X2435) -- (I6) -- cycle;

\fill[magenta!40] (P3) -- (I6) -- (X2435) -- cycle; 
\draw[red] (X1425) circle (1.5pt);

  \foreach \i in {1,...,5} {
    \draw[red] (P\i) circle (1.5pt);
  }

  \fill[blue] (0.08,0.1) circle (1.5pt);
  \fill[blue] (-0.36,-0.13) circle (1.5pt);

  \node at (0.0,-1.1) {nonseparable};
  \node at (-0.15,0.1) {\tiny $D$};
\end{scope}
\begin{scope}[yshift=-2.6cm,xshift=6cm]

  \foreach \i in {1,...,5} {
    \coordinate (P\i) at ({90 + (\i-1)*72}:1);
  }

  \foreach \i/\j in {1/2,1/3,1/4,1/5,2/3,2/4,2/5,3/4,3/5,4/5} {
    \draw[name path=E\i\j, black] (P\i) -- (P\j);
  }
  \draw[thick,black] (P1) -- (P3);

  \path[name intersections={of=E13 and E24, by=X1324}];
  \path[name intersections={of=E13 and E25, by=X1325}];
  \path[name intersections={of=E14 and E35, by=X1435}];
  \path[name intersections={of=E14 and E25, by=X1425}];
  \path[name intersections={of=E24 and E35, by=X2435}];

  \fill[blue!20]    (P1) -- (P2) -- (X1325) --  cycle;
  \fill[cyan!20]    (P1) -- (X1425) -- (P5) -- cycle;
  \fill[purple!20]  (P3) -- (P4) -- (X2435) -- cycle;
  \fill[green!20]   (P4) -- (P5) -- (X1435) --  cycle;
  \fill[orange!20]  (P5) -- (P1) --  (X1425) -- cycle;
  \fill[red!20]     (X1324) -- (P2) -- (P3) -- cycle;

  \fill[teal!20]    (P1) --  (X1325) --  (X1425) -- cycle;
  \fill[red!10]     (P2) -- (X1324) -- (X1325) -- cycle;
  \fill[magenta!10] (P3) -- (X1324) -- (X2435) -- cycle;
  \fill[yellow!20]  (P4) -- (X2435) -- (X1435) -- cycle;
  \fill[brown!20]    (P5) -- (X1435) -- (X1425) -- cycle;

\path[name path=cut1425P3] (X1425) -- (P3);         
\path[name path=inneredge] (X1324) -- (X2435);      
\path[name intersections={of=cut1425P3 and inneredge, by=I6}];

  \fill[gray!20] (X1324) -- (X1325) -- (X1425) -- (X1435) -- (X2435) -- cycle;

\draw[gray, very thin] (X1425) -- (P3);

\fill[violet!30] (X1425) -- (X1435) -- (X2435) -- (I6) -- cycle;

\fill[magenta!40] (P3) -- (I6) -- (X2435) -- cycle; 
\draw[red] (X1425) circle (1.5pt);

  \foreach \i in {1,...,5} {
    \draw[red] (P\i) circle (1.5pt);
  }

  \fill[blue] (0.0,0) circle (1.5pt);
  \fill[blue] (0.45,0.15) circle (1.5pt);

  \node at (0.0,-1.1) {separable};
  
\end{scope}
\end{tikzpicture}

    \caption{\small Separable and nonseparable pairs $(\A_+,\A_-)$ of dimension $2$. Red circles are in $\A_+$ while solid blue points are in $\A_-$. The cells of $\mR_{2}(\A_+)$ are depicted in different colors. 
    The cells $D,D_1,D_2$ satisfy  \eqref{eq:sep}. For all nonseparable supports but the one depicted at the top right, such a cell is unique.     
    }
    \label{fig:pentagons}
\end{figure}

We now state and prove several geometric properties about nonseparable pairs. These form a quite technical succession of lemmas, but they will be essential in the following sections. Given disjoint finite sets $\A_+,\A_-\subseteq \R^n$   such that $\A_+\cup\A_-$ has dimension $d$, we consider the following set for $D\in \mR_d(\A_+)$:
\begin{align}
\Lambda(\A_+,D) &\coloneqq \{\,\Delta \text{ $d$-dimensional simplex}: \vertices(\Delta)\subseteq \A_+ \text{ and }  \relint(D)\subseteq \relint(\Delta)\,\} \, .\label{eq:lambdaD}
\end{align}
By \Cref{rk:triangulation}, $\Lambda(\A_+,D)\neq \varnothing$. 
If the pair $(\A_+,\A_-)$ is nonseparable, then by definition there exists   $D\in \mR_d(\A_+)$ such that $\A_-\subseteq D$. For this $D$, by taking closures of the interiors,  it follows  that 
\[\A_-\subseteq \Delta \quad \text{ for all } \Delta\in \Lambda(\A_+,D) \, . \]

\begin{example}\label[ex]{example:simplices:nonseparable}
Consider a pair $(\A_+,\A_-)$ as in the pentagon at the top left   of \Cref{fig:pentagons}. Enumerating the vertices of the pentagon by $1$ to $5$ starting at the red circle on top and moving clockwise, and representing each simplex by its vertices, we have that
\begin{align*}
  \Lambda(\A_+,D) & =\big\{\{1,3,4\},\{2,4,5\},\{1,3,5\},\{1,2,4\},\{2,3,5\}\big\}\, .
\end{align*}
There are five additional simplices of dimension $2$ with vertices in $\A_+$, namely $\{1,2,3\}$, $\{2,3,4\}$, $\{3,4,5\}$, $\{1,4,5\}$, and $\{1,2,5\}$. 
For the pentagon at the top right of \Cref{fig:pentagons}, we have 
\[ \Lambda(\A_+,D_2)=\big\{\{1,2,3\}, \{2,4,5\}, \{1,2,4\},\{2,3,5\}\big\} \, . \]
\end{example}

\begin{lemma}\label[lem]{lemma:facet:conditions:nonseparable}
Let $\A_+,\A_-\subseteq \R^n$ be disjoint finite sets such that the pair $(\A_+,\A_-)$ 
has dimension $d$, and assume
$D\in\mR_d(\A_+)$ satisfies $\A_-\subseteq D$. The following statements are equivalent: 
    \begin{enumerate}[label=(\roman*)]
        \item There exists a facet $\Gamma_D$ of $D$ such that $\A_-\subseteq \Gamma_D$.
        \item There exist $\Delta\in\Lambda(\A_+,D)$ and a facet $\Gamma_\Delta$ of $\Delta$ such that $\A_-\subseteq \Gamma_\Delta$.
    \end{enumerate}
\end{lemma}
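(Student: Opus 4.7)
The plan is to prove the two implications separately; $(ii)\Rightarrow(i)$ is direct, while $(i)\Rightarrow(ii)$ requires a construction.

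For $(ii)\Rightarrow(i)$, suppose that a facet $\Gamma_\Delta$ of some $\Delta\in\Lambda(\A_+,D)$ contains $\A_-$ and let $\mathcal{H}$ be the hyperplane containing $\Gamma_\Delta$. Since $\mathcal{H}$ is a supporting hyperplane of $\Delta$ and $D\subseteq \Delta$ (by closing up $\relint(D)\subseteq \relint(\Delta)$), $\mathcal{H}$ is also a supporting hyperplane of $D$, so $D\cap \mathcal{H}$ is a face of $D$. This face is proper, as $D$ is $d$-dimensional while $D\cap \mathcal{H}\subseteq \mathcal{H}$ has dimension at most $d-1$, and therefore it is contained in some facet $\Gamma_D$ of $D$. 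Combining $\A_-\subseteq D$ with $\A_-\subseteq \Gamma_\Delta\subseteq \mathcal{H}$ gives $\A_-\subseteq D\cap \mathcal{H}\subseteq \Gamma_D$, which proves (i).

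For $(i)\Rightarrow(ii)$, the plan is to realize the desired simplex $\Delta$ as the maximal cell of a carefully chosen regular triangulation of $\A_+$ that contains $\relint(D)$. Let $\mathcal{H}$ be the hyperplane containing $\Gamma_D$. Because every facet of a cell of $\mathcal{F}(\A_+)$ lies in a hyperplane spanned by $d$ affinely independent points of $\A_+$, the set $V=\A_+\cap \mathcal{H}$ affinely spans $\mathcal{H}$. Choose a height function $h\colon \A_+\to \R$ that assigns small, generic values on $V$ and much larger, generic values on $\A_+\setminus V$. For sufficiently disparate height scales, the induced regular triangulation, restricted to the half-space containing $\relint(D)$, consists of $d$-simplices of the form $\conv(\tau\cup\{w\})$, where $\tau$ is a $(d-1)$-simplex from the triangulation of $V$ induced by $h|_V$ and $w\in \A_+$ lies strictly on the same side of $\mathcal{H}$ as $\relint(D)$. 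Since $\mathcal{F}(\A_+)$ refines every regular subdivision, the simplex $\Delta$ of the triangulation containing $\relint(D)$ satisfies $\relint(D)\subseteq \relint(\Delta)$, so $\Delta\in \Lambda(\A_+,D)$ with facet $\tau\subseteq \mathcal{H}$. Finally, $\A_-\subseteq D\cap \mathcal{H}\subseteq \Delta\cap \mathcal{H}=\tau$ establishes (ii).

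The main obstacle is rigorously justifying the claim that for sufficiently separated heights the $d$-simplices on the $\relint(D)$-side of $\mathcal{H}$ have exactly one vertex in $\A_+\setminus V$. The heuristic is that when heights on $V$ are negligible compared to those on $\A_+\setminus V$, the lower envelope in $\R^{d+1}$ prefers vertices in $V$, so any $d$-simplex of the triangulation using two or more vertices outside $V$ is outcompeted by one using at most one such vertex. An alternative avoiding the regular triangulation argument is to fix $x\in \relint(D)$ and directly exhibit $w\in \A_+$ strictly on the same side of $\mathcal{H}$ as $x$ together with $d$ affinely independent points of $V$ whose joint convex hull with $w$ contains $x$ in its relative interior, and then verify that the resulting simplex belongs to $\Lambda(\A_+,D)$.
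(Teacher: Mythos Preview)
Your argument for $(ii)\Rightarrow(i)$ is correct and matches the paper's.

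For $(i)\Rightarrow(ii)$, however, there is a genuine gap: the regular-triangulation construction is never completed, and you yourself flag the obstacle. The heuristic that ``for sufficiently disparate heights the $d$-simplices on the $\relint(D)$-side of $\mathcal{H}$ have exactly one vertex in $\A_+\setminus V$'' is plausible but not a proof, and the proposed alternative (directly exhibiting $w$ and $d$ points of $V$) is likewise left unverified. So as written, this implication is not established.

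The paper bypasses all of this with a one-line polytope argument. Since $D=\bigcap_{\Delta\in\Lambda(\A_+,D)}\Delta$ is an intersection of full-dimensional simplices, every facet of $D$ lies in a facet hyperplane of one of the $\Delta$'s: the facet inequalities of $D$ are (a subset of) the pooled facet inequalities of the $\Delta$'s, so the hyperplane $\mathcal{H}$ supporting $\Gamma_D$ must support some $\Delta'\in\Lambda(\A_+,D)$. As $D\subseteq\Delta'$ is $d$-dimensional and $D\subseteq\mathcal{H}^+$, we get $\Delta'\subseteq\mathcal{H}^+$, so $\Gamma_{\Delta'}:=\Delta'\cap\mathcal{H}$ is a facet of $\Delta'$ containing $\A_-\subseteq\Gamma_D$. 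This is both shorter and avoids any appeal to triangulations; your first observation (that $\mathcal{H}$ is spanned by points of $\A_+$) is in fact a special case of the same principle, so you were close to the key idea but applied it to $\mathcal{F}(\A_+)$ rather than directly to $\Lambda(\A_+,D)$.
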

\begin{proof}
By 
 choosing an affine transformation sending $(\A_+,\A_-)$ to $\{x\in\R^n\colon x_{d+1}=\cdots=x_n=0\}$ if necessary, we can assume that the pair $(\A_+,\A_-)$ is full dimensional.
By definition of $\mR_d(\A_+)$, it holds that
    \[
    D=\bigcap\limits_{\Delta\in \Lambda(\A_+,D)} \Delta \, . 
    \]
Assume that condition (i) holds 
and let $\mathcal{H}_{v,w}$ be a supporting hyperplane of $\Gamma_D$, so $\Gamma_D=D\cap \mathcal{H}_{v,w}$ and
    \[
    \Gamma_D=\bigcap\limits_{\Delta\in \Lambda(\A_+,D)} \Delta\cap \mathcal{H}_{v,w} \, .
    \]
Since $\dim(\Gamma_D)=n-1$ it follows that $\dim(\Delta\cap \mathcal{H}_{v,w})=n-1$ for all $\Delta\in \Lambda(\A_+,D)$. Moreover, as $D\subseteq \mathcal{H}_{v,w}^+$, there must exist $\Delta'\in \Lambda(\A_+,D)$ with $\Delta'\subseteq \mathcal{H}_{v,w}^+$. Hence, $\Gamma_{\Delta'}\coloneqq\Delta'\cap \mathcal{H}_{v,w}$  is a facet of $\Delta'$ satisfying $\A_-\subseteq \Gamma_D\subseteq \Gamma_{\Delta'}$, and thus  condition (ii) holds. 

Assume now that condition (ii) holds. 
Then there exists a supporting hyperplane $\mathcal{H}_{v,w}$ of $\Gamma_\Delta$ such that $\Gamma_\Delta=\Delta\cap \mathcal{H}_{v,w}$ and $\Delta\subseteq \mathcal{H}_{v,w}^+$. As $D\subseteq \Delta$ and $\A_-\subseteq D\cap \mathcal{H}_{v,w}$, $\mathcal{H}_{v,w}$ is also a supporting hyperplane of some face of $D$ containing $\A_-$. In particular, $\A_-$ is contained in a facet of $D$ and condition (i) holds. 
\end{proof}

The next lemma, which relates $\A_+$ and $\Lambda(\A_+,D)$ for nonseparable signed supports, builds on the ideas for the case $\#\A_-=1$   in \cite[Lemma 3.7]{Wang_Nonneg}. An example illustrating the lemma is given in \Cref{fig:correspondence:placeholder}.

\begin{lemma}\label[lem]{lemma:correspondance}
Let $\A_+,\A_-\subseteq \R^n$ be disjoint finite sets such that  $(\A_+,\A_-)$ 
has dimension $d$, and $\A_-\subseteq D$ for some
$D\in\mR_d(\A_+)$. 
Let $\Delta\in \Lambda(\A_+,D)$. 
Then, for each $a\in \A_+\setminus \vertices(\Delta)$, there exists a unique facet $\Gamma_a$ of $\Delta$ such that $conv(\Gamma_a\cup\{a\})\in \Lambda(\A_+,D)$. 

In particular, there is a well-defined and injective map
\begin{align}\label{eq:correspondence:simplices}
    \varphi_\Delta \colon\A_+\setminus \vertices(\Delta) & \longrightarrow \Lambda(\A_+,D) \qquad 
    a  \longmapsto  \conv(\Gamma_a\cup\{a\})\,,
\end{align}
and $\#\Lambda(\A_+,D)\geq \#\A_+ -d$. 
\end{lemma}
\begin{proof}
Fix $a\in \A_+\setminus \vertices(\Delta)$  and let $v_1,\dots,v_{d+1}$ be the   vertices of $\Delta$. Let $\Gamma_i$ be the facet of $\Delta$ opposite to $v_i$. For each $i\in [d+1]$, consider the simplices $P_i\coloneqq\conv(\Gamma_i\cup\{a\})$. We show that exactly one of these simplices is both $d$-dimensional and containing $D$. For $x\in\R^n$ with $x\neq a$, consider the ray from $a$ passing through $x$:
    \[
    R(x)\coloneqq\{tx+(1-t)a\:\colon t\in \R_{\geq 0}\}\,.
    \]
  If $x\in\relint(\Delta)$, then $R(x)$ intersects $\partial \Delta$ at one or two points, depending on the relative position of $a$ with respect to $\Delta$.  In both cases, there exists $y(x)\in R(x)\cap\partial\Delta$ such that $x$ lies in the segment between $y(x)$ and $a$. If $y(x)\in\Gamma_i$ for some $i$, then $x\in P_i$. 
Using this we conclude that
\begin{equation}\label{eq:relint}
\relint(\Delta)=\bigcup\nolimits_{i\in[d+1]}P_{i}^\circ\, \qquad \text{where  } P_{i}^\circ\coloneqq\relint(\Delta)\cap P_i \, . 
\end{equation}
If $P_{i}^\circ\neq \varnothing$, then $\dim(P_i)=d$. Indeed, $\dim(P_i)<d$ if and only if $a\in\Aff(\Gamma_i)$ and, in this case, $\relint(\Delta)\cap P_i=\varnothing$. 
By \eqref{eq:relint}, as $\relint(D)\subseteq \relint(\Delta)$ and $\dim(D)=d$, there must exist $i'$ such that $\relint(D)\cap\relint(P_{i'})\neq\varnothing$ and $P_{i'}^\circ\neq \varnothing$. By \Cref{rk:triangulation}, this means that $\relint(D)\subseteq \relint(P_{i'})$, hence  $P_{i'} \in \Lambda(\A_+,D)$. 

If 
$P_{i}^\circ\neq\varnothing$, we have that
\[
\relint(P_{i}^\circ)=\relint(\Delta)\cap\relint(P_i)
=\{x\in \relint(\Delta)\:\colon y(x)\in \relint(\Gamma_i)\}
\]
and hence the sets $\relint(P_{i}^\circ)$  are pairwise disjoint. As we also have that $\relint(D)\subseteq \relint(P_{i'}^\circ)$, 
we conclude that $P_{i'}$ is the unique simplex among $\{P_i\}_{i\in [d+1]}$ containing $D$. 
This shows that $\varphi_\Delta$ is well defined. 
Injectivity follows immediately as $\varphi_\Delta(a)$ is the only simplex in the image of $\varphi_\Delta$ that has $a$ as a vertex.
For the last statement, the image of $\varphi_\Delta$ contains at least $\#\A_+- (d+1)$ distinct simplices. As $\Delta\in \Lambda(\A_+,D)$ is not in this image, the set $\Lambda(\A_+,D)$ contains at least $\#\A_+-d$ distinct simplices. 
\end{proof}

\begin{figure}
    \centering
    \begin{tikzpicture}[scale=1.5]

  \foreach \i in {1,...,5} {
    \coordinate (P\i) at ({90 + (\i-1)*72}:1);
  }

  \draw[thin] (P1)--(P2)--(P3)--(P4)--(P5)--cycle;

  \begin{scope}[blend group=multiply] 
  \fill[teal!30,  fill opacity=.6] (P1)--(P3)--(P4)--cycle; 
  \fill[magenta!30,         fill opacity=.6] (P2)--(P3)--(P4)--cycle; 
  \fill[green!30,fill opacity=.6] (P5)--(P3)--(P4)--cycle; 
\end{scope}

  \foreach \i in {1,...,5} {
    \draw[red] (P\i) circle (1.5pt);
  }

  \fill[blue] (-0.22,-0.55) circle (1.5pt);
  \fill[blue] (0.22,-0.55) circle (1.5pt);

  \node at (0.,-0.65) {\tiny $D$
};
\node at (0.,-0.95) {\small $\Gamma$
};
\node at (-1.3,0.5) {{\color{magenta!80!black}$\Delta_3$}
};
\node at (0.1,1.3) {{\color{teal!80!black}$\Delta_1$}
};
\node at (1.3,0.5) {{\color{green!70!black}$\Delta_2$}
};
\node at (0.25,1.05) {\small $a_1$
};
\node at (1.2,0.2) {\small $a_2$
};
\node at (0.9,-0.85) {\small $a_3$
};
\node at (-0.9,-0.85) {\small $a_4$
};
\node at (-1.2,0.2) {\small $a_5$
};
  \end{tikzpicture}
   \caption{\small Coloured points represent a nonseparable pair, with solid blue dots in $\A_-$ and red circles in $\A_+$. $D$ is the only  cell of $\mR_2(\A_+)$ containing $\A_-$. The elements of $\Lambda(\A_+,D)=\{\Delta_1,\Delta_2,\Delta_3\}$ are depicted. The map $\varphi_{\Delta_1}$ in \Cref{lemma:correspondance} sends $a_2$ to $\Delta_2=\conv(\{a_2\}\cup\Gamma)$   and $a_5$ to  $\Delta_3=\conv(\{a_5\}\cup\Gamma)$.
   }
    \label{fig:correspondence:placeholder}
\end{figure}

 The following property of nonseparable pairs for which the relative interior of $\conv(\A_+)$ contains some element of $\A_-$ will play an important role in the proof of \Cref{lemma:hessian:nonseparable} in  \Cref{sec:computational}. 

 \begin{lemma}\label[lem]{lemma:dimensionality}
 Let $\A_+,\A_-\subseteq \R^n$ be disjoint finite sets such that  $\A_-\cap\relint(\conv(\A_+))\neq\varnothing$,    $(\A_+,\A_-)$ 
  has dimension $d$, and  $\A_-\subseteq D$ for some $D\in\mR_d(\A_+)$. 

Let $\mathcal{D}\subseteq \Lambda(\A_+,D)$ be a set satisfying the property that every $a\in\A_+$ is the vertex of some simplex in $\mathcal{D}$. Consider $b\in\A_-\cap \relint(\conv(\A_+))$ and for each $\Delta\in\mathcal{D}$, let $\Gamma_{b,\Delta}$ be the unique face of $\Delta$ such that $b\in\relint(\Gamma_{b,\Delta})$. Then, 
 \[
\dim\left(\conv\left(\bigcup\limits_{\Delta\in\mathcal{D}}\Gamma_{b,\Delta}\right)\right)=d\,.
 \]     
 \end{lemma}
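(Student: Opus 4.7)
My plan is to argue by contradiction. Suppose $\dim\conv\bigl(\bigcup_{\Delta\in\mathcal{D}'}\Gamma_{b,\Delta}\bigr)\leq d-1$; then this union lies in a hyperplane $H\subseteq\Aff(\A_+)$, and since every $\Gamma_{b,\Delta}$ contains $b$ in its relative interior, $b\in H$.

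The first step would be to convert the geometric assumption into barycentric data. For each $\Delta\in\mathcal{D}'$, writing $b=\sum_{v\in V(\Delta)}\lambda^b_v v$ with $\sum_v\lambda^b_v=1$ and $\lambda^b_v\geq 0$, the vertex set of $\Gamma_{b,\Delta}$ is exactly $\{v\in V(\Delta):\lambda^b_v>0\}$. The inclusion $\Gamma_{b,\Delta}\subseteq H$ therefore forces $\lambda^b_v=0$ for every $v\in V(\Delta)\setminus H$, so that $b\in\conv(V(\Delta)\cap H)$; in particular $|V(\Delta)\cap H|\geq 2$ since $b\notin\A_+$. Combined with $b\in\relint(\conv(\A_+))$, this implies that both open half-spaces $H^+$ and $H^-$ in $\Aff(\A_+)$ meet $\A_+$.

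The central step is to derive a contradiction using the covering hypothesis together with the flip correspondence. Pick $a_+\in\A_+\cap H^+$ and, by covering, choose $\Delta_+\in\mathcal{D}'$ with $a_+\in V(\Delta_+)$. Then $\lambda^b_{a_+}=0$, so $b$ lies in the facet $F$ of $\Delta_+$ opposite $a_+$. Applying \Cref{lemma:correspondance:positive:vectors:simplices} to $\Delta_+$, there is a unique neighbour $\Delta^\ast\in\Lambda(\A_+,D)$ sharing $F$ with $\Delta_+$, with a new vertex $a^\ast\in\A_+\setminus V(\Delta_+)$ lying on the opposite side of $\Aff(F)$ from $a_+$. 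Analysing the barycentric representation of $b$ in $\Delta^\ast$ shows $\lambda^b_{a^\ast}>0$, hence $a^\ast\in V(\Gamma_{b,\Delta^\ast})$. If one can arrange $\Delta_+$ so that $\Aff(F)=H$ (for instance when $|V(\Delta_+)\cap H|=d$, in which case the $d$ vertices of $\Delta_+$ in $H$ span a facet of $\Delta_+$ lying in $H$), then $a^\ast$ is forced into $H^-$; if additionally $\Delta^\ast\in\mathcal{D}'$, the hypothesis $\Gamma_{b,\Delta^\ast}\subseteq H$ would require $a^\ast\in H$, a contradiction.

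The main obstacle is twofold: ensuring the flipped simplex $\Delta^\ast$ lies in $\mathcal{D}'$, and arranging the hyperplane $\Aff(F)$ to coincide with $H$. Neither is automatic for an arbitrary covering $\mathcal{D}'$, and I expect this is where the bulk of the technical work resides. The approach to resolve it is to iterate: by the covering assumption $a^\ast\in V(\Delta^{\ast\ast})$ for some $\Delta^{\ast\ast}\in\mathcal{D}'$, and by the first step applied to $\Delta^{\ast\ast}$ one gets $\lambda^b_{a^\ast}=0$ in $\Delta^{\ast\ast}$ unless $a^\ast\in H$; one then flips again. Tracking the signed distance $\langle v_H,a_k-b\rangle$ (with $v_H$ a unit normal to $H$) along the sequence of visited vertices and using the finiteness of $\A_+$, together with the geometric structure of $\Lambda(\A_+,D)$ as a star around $D$, should produce either an impossible infinite descent or a step at which the flipped simplex already lies in $\mathcal{D}'$, yielding the desired contradiction. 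The delicate point is organising the flip/covering iterations so as to eventually land inside $\mathcal{D}'$ while preserving control of the position of the newly produced vertices relative to $H$.
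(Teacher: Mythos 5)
Your first step (the common hyperplane $H$, the barycentric conversion, the observation that $\A_+$ has points in both open half-spaces of $H$) matches the paper. But from there the paper takes a much shorter, direct route that does not use the flip correspondence of \Cref{lemma:correspondance:positive:vectors:simplices} at all, and this is the idea you are missing: once $\Gamma_{b,\Delta}\subseteq H$ for \emph{all} $\Delta\in\mathcal{D}'$ simultaneously, the paper argues that $H$ in fact \emph{supports} $\Gamma_{b,\Delta}$ in each $\Delta$ (the ``$v\in V$'' step), i.e.\ every $\Delta\in\mathcal{D}'$ lies entirely in one closed half-space of $H$; this uses only that all of the $\Delta$'s share the full-dimensional $D$ and hence cannot sit in opposite open half-spaces of a hyperplane containing their common faces $\Gamma_{b,\Delta}$. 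Granting this, the contradiction is immediate. Take $a_+\in\A_+\cap H^+$ and $a_-\in\A_+\cap H^-$ (which exist since $b\in H\cap\intt(\conv(\A_+))$), and, by the covering hypothesis, $\Delta_+,\Delta_-\in\mathcal{D}'$ with $a_\pm\in\vertices(\Delta_\pm)$. Since each is trapped on one side of $H$ and contains a vertex in the strict open half-space, $\Delta_+\subseteq H^+$ and $\Delta_-\subseteq H^-$. Then $\Delta_+\cap\Delta_-\subseteq H$ has dimension at most $d-1$, contradicting $D\subseteq\Delta_+\cap\Delta_-$ and $\dim D=d$. No flipping, no iteration.

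This is a genuine gap in your proposal rather than a superficial difference in presentation. You explicitly acknowledge the two obstacles in your scheme — arranging $\Aff(F)=H$, and ensuring the flipped simplex $\Delta^\ast$ again lies in $\mathcal{D}'$ — and offer only a hope that tracking signed distances and invoking finiteness ``should produce'' a contradiction. Neither obstacle is resolved, and neither is needed: the covering hypothesis is used once, to produce $\Delta_+$ and $\Delta_-$ on opposite sides, and the argument closes immediately. The iteration you set up is not just unfinished, it is misdirected, because the flips of \Cref{lemma:correspondance:positive:vectors:simplices} live in all of $\Lambda(\A_+,D)$ and there is no mechanism forcing them to stay inside the subset $\mathcal{D}'$; that is precisely the difficulty you could not overcome, and it disappears in the paper's approach.
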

\begin{proof}
As the statement holds for a pair $(\A_+,\A_-)$ if and only if it holds for $(\psi(\A_+),\psi(\A_-))$ and any affine transformation $\psi$, we can as usual assume  that the pair $(\A_+,\A_-)$ is full dimensional.
Let $U\coloneqq\bigcup_{\Delta\in \mathcal{D}}\Gamma_{b,\Delta}$, and consider the intersection of  outer normal cones
\[ V\coloneqq\bigcap_{\Delta\in\mathcal{D}}
\{-v\:\colon\:\mathcal{H}_{v,w}\text{ is a supporting hyperplane of }\Gamma_{b,\Delta}\text{ for some }w\in\R\}\, . \]

Suppose for a contradiction that $\dim(\conv(U))<n$. Then, there exists a nonzero vector $v\in \R^n$ such that $\Gamma_{b,\Delta}\subseteq \conv(U)\subseteq\mathcal{H}_{-v,\langle v,b\rangle}$  for all $\Delta\in\mathcal{D}$. 
By changing the sign of $v$ if necessary, there exists $\Delta'\in\mathcal{D}$ such that $\Delta'\subseteq \mathcal{H}_{-v,\langle v, b\rangle}^+$. Any other $\Delta\in\mathcal{D}$ satisfies $\Delta\subseteq \mathcal{H}_{-v,\langle v, b\rangle}^+$ as well, as 
 $\Delta'\cap\Delta$ has dimension $n$ since it contains $D$. Hence $v\in V$. 

Let now $H\coloneqq\mathcal{H}_{-v,\langle v,b\rangle}$. As $b\in\intt(\conv(\A_+))\cap H$ and $\dim(\conv(\A_+))=n$, there exist $a_1,a_2\in\A_+ \setminus H$ such that $a_1\in H^+$ and $a_2\in H^-$. 
By the assumption on $\mathcal{D}$, there exist  $\Delta_1,\Delta_2\in\mathcal{D}$ such that $a_1\in\vertices(\Delta_1)$ and $a_2\in\vertices(\Delta_2)$. As $H$ is a supporting hyperplane of both $\Gamma_{b,\Delta_1}$ and $\Gamma_{b,\Delta_2}$, it follows that $\Delta_1\subseteq H^+$ and $\Delta_2\subseteq H^-$,  so $\Delta_1\cap\Delta_2\subseteq H$ has at most dimension $n-1$. This contradicts $D\subseteq \Delta_1\cap\Delta_2$, since $\dim(D)=n$.
\end{proof}

\subsection{SONC signed supports}\label{subsec:SONC:supports}
In this subsection, we review circuit signomials and the SONC and copositivity cones associated with a given signed support. We revisit conditions from \cite{Wang_Nonneg,murray_Newtonpol,ellwanger} on the signed support  that ensure that the two cones coincide.

Given disjoint finite sets $\A_+,\A_-\subseteq \R^n$, we say that the pair $(\A_+,\A_-)$  is an \term{extended circuit}
if  for $\A=\A_+\cup \A_-$ we either have that $\#\A=\#\A_+=1$  or $\conv(\A)$ is a simplex such that 
$  \A_+=\vertices(\conv(\A))\, . $
In particular $\A_- \subseteq \conv(\A_+)$. 

We say that $(\A_+,\A_-)$  is a  \term{circuit}  if  either $\#\A=\#\A_+=1$ or it is an extended circuit that additionally satisfies  $\#\A_-=1$ and $\A_-\subseteq\relint(\conv(\A_+))$.

A \term{circuit signomial} is a signomial  whose signed support  
is a circuit. The copositivity of a circuit signomial on $\R^n_{>0}$ is characterised in terms of the circuit number \cite{craciun_pantea_koeppl,wolff_nonneg} (cf. \Cref{example:circuit:number}). We say that a signomial $f$ \term{is SONC} (or admits a \textit{SONC decomposition}) if there exist copositive circuit signomials $q_1,\dots,q_r$  such that 
\[f=q_1+\dots+q_r \, . \]

SONC stands for Sum of \textit{Nonnegative} Circuits, while in this work we consider the term copositivity. There is no ambiguity in the nomenclature as nonnegativity (over $\R^n_{>0}$) and copositivity are equivalent notions for signomials. The set of SONC signomials defines the cone
\[
\mathcal{C}_{  \text{{\tiny SONC}},n}\coloneqq\Bigl\{ \sum_{i=1}^rq_i : r\in \Z_{>0},\: q_1,\dots,q_r\text{ copositive circuit signomials  in $\mathcal{S}_n$}\Bigr\}.
\]
This is a subcone of the cone of copositive signomials 
\[
\mathcal{C}_n\coloneqq\big\{f \in \mathcal{S}_n :f\text{ is  copositive}\big\},
\]
called the \term{copositive cone} \cite{motzkin_copositive,Moment_polyomial_optimization}.
We recall the following known result.

\begin{lemma}\label{lem:SONC:singular:zeros}
Let $\A_+,\A_-\subseteq \R^n$ be disjoint finite sets such that $\A_-\subseteq\intt(\conv(\A_+))$ and $(\A_+,\A_-)$ is  full dimensional.
If $f\in \mathcal{S}(\A_+,\A_-)$ is SONC, then 
\[ \#\Sing_{>0}(f)\leq 1\, . \]
\end{lemma}

\begin{proof}
Let $f=q_1+\dots +q_r$ be a SONC decomposition of $f$ and let $\A_i$ be the support of $q_i$. As all $q_i$'s are copositive, $f(x_*)=0$ if and only if $q_i(x_*)=0$ for all $i\in[r]$, and hence  by \Cref{rk:copositive_zeros}, $x_*\in\Sing_{>0}(f)$ if and only if $x_*\in\Sing_{>0}(q_i)$ for all  $i\in [r]$. 

If $\Sing_{>0}(f)\neq \varnothing$, then for each $i$, the set 
$ \Sing_{>0}(q_i)$ is logarithmically affine, that is, 
\[
\Sing_{>0}(q_i)=\text{Exp}(w_i+\Aff(\A_i)^\perp)\quad \text{for some }w_i\in\R^n \, ,
\]
where  Exp is component-wise exponentiation \cite[Theorem 3.1]{algebraic_boundary_SONC}. 
This is due to the relation between copositive circuits and simplicial agiforms (see \cite{algebraic_boundary_SONC,ReznickBruce1989Fdft} for details).  
This implies   that  
\[
\Sing_{>0}(f)=\bigcap_{i=1}^r \Sing_{>0}(q_i)=\bigcap_{i=1}^r\text{Exp}(w_i+\Aff(\A_i)^\perp)=\text{Exp}\left(\bigcap_{i=1}^r(w_i+\Aff(\A_i)^\perp)\right) \, .
\]
 A finite intersection of affine spaces can be either  empty, one point, or contain a line. As $\conv(\A)$ is full dimensional and $\A_-\subseteq\intt(\conv(\A_+))$,  $\Sing_{>0}(f)$ is contained in a  compact subset of $\R^n_{>0}$ 
by \Cref{prop:relatively:compact:set}(ii).
 Since the exponential image of an affine line is not compact, we conclude that $\Sing_{>0}(f)$ is one point when nonempty.
  \end{proof}

The cones $\mathcal{C}_{  \text{{\tiny SONC}},n}$ and $\mathcal{C}_n$ do not agree as illustrated by the Robinson polynomial \cite{Robinson} 
 \[f = 1+x_1^6+x_2^6-(x_1^4x_2^2+x_1^2x_2^4+x_1^4+x_2^4+x_1^2+x_2^2)+3x_1^2x_2^2 \,, \]
 which belongs to $\mathcal{C}_2$ but does not admit a SONC decomposition. However, there exist signed supports for which  any signomial that is copositive also admits a SONC decomposition. 
To formalise this, given disjoint finite sets $\A_+,\A_-\subseteq \R^n$, we introduce the subcones 
\begin{align*}
    \mathcal{C}_{  \text{{\tiny SONC}}}(\A_+,\A_-) & \coloneqq \mathcal{C}_{  \text{{\tiny SONC}},n} \cap \mathcal{S}(\A_+,\A_-),&
    \mathcal{C}(\A_+,\A_-) & \coloneqq  \mathcal{C}_n \cap \mathcal{S}(\A_+,\A_-)\, .
\end{align*}

While the ambient vector space of the cones $\mathcal{C}_n$ and $\mathcal{C}_{  \text{{\tiny SONC}},n}$ has infinite dimension, that  of   $\mathcal{C}(\A_+,\A_-)$ and $\mathcal{C}_{  \text{{\tiny SONC}}}(\A_+,\A_-)$ has finite dimension $\#(\A_+ \cup \A_-)$. 
We say that $(\A_+,\A_-)$ is a \term{SONC signed support} if 
\[\mathcal{C}(\A_+,\A_-)=\mathcal{C}_{  \text{{\tiny SONC}}}(\A_+,\A_-)\, .\] 

Several conditions are known to guarantee that a signed support $(\A_+,\A_-)$  is SONC:
\begin{itemize}
    \item If $\A_+\cup\A_-\subseteq \Z^n$, $\A_+$ is the set of vertices of a simplex,   and $\A_-\subseteq \relint(\conv(\A_+))$ \cite[Corollary 7.5]{wolff_nonneg}. 
    \item If $\A_+\cup\A_-\subseteq\Z^n$, $\#\A_-\leq 1$ and $\A_-\subseteq\conv(\A_+)$ \cite[Theorem 3.9]{Wang_Nonneg}. This result was independently proven in \cite{murray_Newtonpol} in the more general setting of real exponents.
    \item If $(\A_+,\A_-)$ is an extended circuit \cite[Theorem 1.2]{ellwanger}.
\end{itemize}
Additionally, in \cite[Theorem 4.1]{Wang_Nonneg}, Wang provides an additional condition to guarantee that a signed support is SONC that inspired a big part of this work: 
assume that $\A_+\cup\A_-\subseteq\Z^n$, $\A_-\subset\relint(\conv(\A_+))$, that some vertex of $\conv(\A_+)$ is \textit{simple},  i.e. it lies in precisely $\dim(\conv(\A_+))$ many edges of $\conv(\A_+)$, and that $\A_-$ lies in the interior of a halfspace of all hyperplanes  determined by points in $\A_+$. Then $(\A_+,\A_-)$ is a SONC signed support.

Wang conjectured that the assumption of simplicity of some vertex could be dropped. Moreover, although \cite[Theorem 4.1]{Wang_Nonneg} is stated in terms of hyperplanes determined by the points in $\A_+$, all that is used in the proof is that any simplex of maximal dimension with vertices in $\A_+$ either does not intersect $\A_-$ or contains it in its interior, linking to the notion of nonseparability. 

Interestingly, all signed supports such that $\A_-$ is in the interior of $\conv(\A_+)$, and that are known to be SONC, are nonseparable (\Cref{def:nonseparable:pairs}).  However, not all separable signed supports are known to be SONC. For example, the nonseparable supports  in \Cref{fig:pentagons}  with $\A_-$ intersecting the boundary of the cells in $\mR(\A_+)$ are configurations not covered by any of the results above, and in particular by \cite[Theorem 4.1]{Wang_Nonneg}.  The same holds for the configuration in \Cref{fig:Wang:condition}, as it has no simple vertex.

\begin{figure}
    \centering
    \begin{tikzpicture}[
  line join=round, line cap=round
]
\begin{scope}[rotate=180]

\pgfmathtruncatemacro{\N}{4}  
\pgfmathsetmacro{\R}{2.0}      
\pgfmathsetmacro{\H}{2.2}      
\pgfmathtruncatemacro{\NmOne}{\N-1}
\def\dotrad{2pt}            

\pgfmathsetmacro{\Vx}{0.0}
\pgfmathsetmacro{\Vy}{0.0}
\pgfmathsetmacro{\Vz}{-1.0}

\coordinate (T) at (0,0,\H);
\coordinate (B) at (0,0,-\H);
\coordinate (Tb) at (-0.28,0.6,-\H+0.55);
\coordinate (Tbl) at (-0.2,0.3,-\H+0.4);

\foreach \k in {0,...,\NmOne}{
  \pgfmathsetmacro{\ang}{360*\k/\N}
  \coordinate (P\k) at ({\R*cos(\ang)},{\R*sin(\ang)},0);
}

\begin{scope}[blend group=multiply]

  \foreach \k in {0,...,\NmOne}{
    \pgfmathtruncatemacro{\kp}{mod(\k+1,\N)}
    \fill[blue!20!white, fill opacity=.6] (T)--(P\k)--(P\kp)--cycle;
  }

  \foreach \k in {0,...,\NmOne}{
    \pgfmathtruncatemacro{\kp}{mod(\k+1,\N)}
    \fill[cyan!25!white, fill opacity=.6] (B)--(P\kp)--(P\k)--cycle;
  }
\end{scope}

\newif\iffrontTopEdge
\newif\iffrontBottomEdge
\newif\iffrontBaseEdge

\foreach \k in {0,...,\NmOne}{
  \pgfmathtruncatemacro{\kp}{mod(\k+1,\N)}
  \pgfmathtruncatemacro{\km}{mod(\k-1,\N)}

  \pgfmathsetmacro{\bx}{\R*cos(360*\k/\N)}   \pgfmathsetmacro{\by}{\R*sin(360*\k/\N)}   \pgfmathsetmacro{\bz}{0}
  \pgfmathsetmacro{\cx}{\R*cos(360*\kp/\N)}  \pgfmathsetmacro{\cy}{\R*sin(360*\kp/\N)}  \pgfmathsetmacro{\cz}{0}
  \pgfmathsetmacro{\dx}{\R*cos(360*\km/\N)}  \pgfmathsetmacro{\dy}{\R*sin(360*\km/\N)}  \pgfmathsetmacro{\dz}{0}

  \pgfmathsetmacro{\Sone}{ ((\by-0)*(\cz-\H) - (\bz-\H)*(\cy-0))*\Vx + ((\bz-\H)*(\cx-0) - (\bx-0)*(\cz-\H))*\Vy + ((\bx-0)*(\cy-0) - (\by-0)*(\cx-0))*\Vz }

  \pgfmathsetmacro{\Stwo}{ ((\dy-0)*(\bz-\H) - (\dz-\H)*(\by-0))*\Vx + ((\dz-\H)*(\bx-0) - (\dx-0)*(\bz-\H))*\Vy + ((\dx-0)*(\by-0) - (\dy-0)*(\bx-0))*\Vz }
  \frontTopEdgefalse
  \ifdim \Sone pt>0pt \frontTopEdgetrue\fi
  \ifdim \Stwo pt>0pt \frontTopEdgetrue\fi

  \pgfmathsetmacro{\Sbotone}{ ((\cy-0)*(\bz+\H) - (\cz+\H)*(\by-0))*\Vx + ((\cz+\H)*(\bx-0) - (\cx-0)*(\bz+\H))*\Vy + ((\cx-0)*(\by-0) - (\cy-0)*(\bx-0))*\Vz }
  
  \pgfmathsetmacro{\Sbottwo}{ ((\by-0)*(\dz+\H) - (\bz+\H)*(\dy-0))*\Vx + ((\bz+\H)*(\dx-0) - (\bx-0)*(\dz+\H))*\Vy + ((\bx-0)*(\dy-0) - (\by-0)*(\dx-0))*\Vz }
  \frontBottomEdgefalse
  \ifdim \Sbotone pt>0pt \frontBottomEdgetrue\fi
  \ifdim \Sbottwo pt>0pt \frontBottomEdgetrue\fi

  \frontBaseEdgefalse
  \ifdim \Sone pt>0pt \frontBaseEdgetrue\fi
  \ifdim \Sbotone pt>0pt \frontBaseEdgetrue\fi

  \iffrontTopEdge\else     \draw[thin, black!50] (T)--(P\k);\fi
  \iffrontBottomEdge\else  \draw[thin, black!50] (B)--(P\k);\fi
  \iffrontBaseEdge\else    \draw[thin, black!50] (P\k)--(P\kp);\fi
}

\foreach \k in {0,...,\NmOne}{
  \pgfmathtruncatemacro{\kp}{mod(\k+1,\N)}
  \pgfmathtruncatemacro{\km}{mod(\k-1,\N)}
  \pgfmathsetmacro{\bx}{\R*cos(360*\k/\N)}   \pgfmathsetmacro{\by}{\R*sin(360*\k/\N)}   \pgfmathsetmacro{\bz}{0}
  \pgfmathsetmacro{\cx}{\R*cos(360*\kp/\N)}  \pgfmathsetmacro{\cy}{\R*sin(360*\kp/\N)}  \pgfmathsetmacro{\cz}{0}
  \pgfmathsetmacro{\dx}{\R*cos(360*\km/\N)}  \pgfmathsetmacro{\dy}{\R*sin(360*\km/\N)}  \pgfmathsetmacro{\dz}{0}
  \pgfmathsetmacro{\Sone}{ ((\by-0)*(\cz-\H) - (\bz-\H)*(\cy-0))*\Vx + ((\bz-\H)*(\cx-0) - (\bx-0)*(\cz-\H))*\Vy + ((\bx-0)*(\cy-0) - (\by-0)*(\cx-0))*\Vz }
  \pgfmathsetmacro{\Sbotone}{ ((\cy-0)*(\bz+\H) - (\cz+\H)*(\by-0))*\Vx + ((\cz+\H)*(\bx-0) - (\cx-0)*(\bz+\H))*\Vy + ((\cx-0)*(\by-0) - (\cy-0)*(\bx-0))*\Vz }
 
  \pgfmathsetmacro{\Stwo}{ ((\dy-0)*(\bz-\H) - (\dz-\H)*(\by-0))*\Vx + ((\dz-\H)*(\bx-0) - (\dx-0)*(\bz-\H))*\Vy + ((\dx-0)*(\by-0) - (\dy-0)*(\bx-0))*\Vz }
  \pgfmathsetmacro{\Sbottwo}{ ((\by-0)*(\dz+\H) - (\bz+\H)*(\dy-0))*\Vx + ((\bz+\H)*(\dx-0) - (\bx-0)*(\dz+\H))*\Vy + ((\bx-0)*(\dy-0) - (\by-0)*(\dx-0))*\Vz }
  \frontTopEdgefalse
  \ifdim \Sone pt>0pt \frontTopEdgetrue\fi
  \ifdim \Stwo pt>0pt \frontTopEdgetrue\fi
  \frontBottomEdgefalse
  \ifdim \Sbotone pt>0pt \frontBottomEdgetrue\fi
  \ifdim \Sbottwo pt>0pt \frontBottomEdgetrue\fi
  \frontBaseEdgefalse
  \ifdim \Sone pt>0pt \frontBaseEdgetrue\fi
  \ifdim \Sbotone pt>0pt \frontBaseEdgetrue\fi

  \iffrontTopEdge     \draw[line width=0.7pt] (T)--(P\k);\fi
  \iffrontBottomEdge  \draw[line width=0.7pt] (B)--(P\k);\fi
  \iffrontBaseEdge    \draw[line width=0.7pt] (P\k)--(P\kp);\fi
}

\draw[red] (T) circle[radius=\dotrad];
\draw[red] (B) circle[radius=\dotrad];
\foreach \k in {0,...,\NmOne}{ \draw[red] (P\k) circle[radius=\dotrad]; }

\fill[blue] (Tb) circle[radius=\dotrad];
\fill[blue] (Tbl) circle[radius=\dotrad];
\end{scope}

\end{tikzpicture}

    \caption{\small A nonseparable pair $(\A_+,\A_-)$, with hollow red points representing $\A_+$ and solid blue points representing $\A_-$. No vertex of the octahedron is simple. Any signomial with this signed support does not satisfy the assumptions of \cite[Theorem 4.1]{Wang_Nonneg}.}
    \label{fig:Wang:condition}
\end{figure}

The rest of this subsection is devoted to build the ingredients to show in \Cref{thm:nonseparable:signed:supports:SONC} that 
\[
\begin{aligned}
    (\mathcal{A}_+,\mathcal{A}_-) &\text{ nonseparable signed support in } \mathbb{R}^n \\
    &\text{with } \mathcal{A}_-\subseteq \relint( \conv(\mathcal{A}))
\end{aligned}
\quad \implies \quad
\mathcal{C}_{\mathrm{SONC}}(\mathcal{A}_+,\mathcal{A}_-) = \mathcal{C}(\mathcal{A}_+,\mathcal{A}_-)\, . 
\]
In fact, a slightly more general version is shown. 
The criterion for copositivity from \Cref{cor:copositivity:interior} will play a critical role in our proof. The main idea is as follows. For a copositive signomial $f$ with a nonseparable signed support, we consider an associated signomial $f_{t}$ obtained from a height function lifting $\A_-$. For $t_*$ defined as in \Cref{cor:copositivity:interior}, we find a SONC decomposition of $f_{t_*}$. This determines a SONC decomposition of $f_{t}$ for all $t<t_*$, and in particular for $f$, as $1<t_*$ because $f$ is copositive. 
The crucial step in our strategy is to find a SONC decomposition of $f_{t_*}$ when the support is nonseparable. So, we focus  on finding SONC decompositions of signomials with at least one positive singular zero.

\begin{remark}\label[rmk]{rk:tricks:SONC:decomposition}\label[rmk]{rk:affine:nonnegative}
    Let $f\in\mathcal{S}(\A_+,\A_-)$ and $x_*\in \Sing_{>0}(f)$. When finding SONC decompositions,  there are two assumptions we can make without loss of generality.

    \begin{itemize}
        \item We can assume that $x_*=\mathbbm{1}$. Indeed, $f$ and $q(w)\coloneqq f(x_*\star w)$ have the same signed support  and $\mathbbm{1}\in\Sing_{>0}(q)$. Moreover, $q(w)=\sum_{i=1}^r q_i(w)$ is a SONC decomposition of $q$ if and only if $f(w)=\sum_{i=1}^r q_i(x_*^{-1}\star w)$ is a SONC decomposition of $f$. 

For later use, we observe  that by letting $c$ be the nonsigned coefficients of $f$, 
$\mathbbm{1}\in\Sing_{>0}(f)$ if and only if 
\begin{equation}\label{eq:crit:system:f}
    \sum_{a\in \A_+}c_a =\sum_{b\in\A_-}c_b\,, \qquad  \text{and}\qquad 
    \sum_{a\in \A_+}c_a\,a =\sum_{b\in\A_-}c_b\,b\,.
\end{equation}
  
    \item We can assume   that $(\A_+,\A_-)$ is full dimensional. Indeed, by \Cref{lemma:affine:transformation:Adiscriminant} and  observations similar to \Cref{rk:affine:transformation:lower:dimension},   if $\A=\A_+\cup \A_-$  has dimension $d<n$, there exists an affine transformation such that $\psi(\A)$ lies in the subspace $x_{d+1}=\cdots=x_n=0$. With the   natural identification of $\R^{\A}_{>0}$ with $\R^{\psi(\A)}_{>0}$ we have 
    \[ \mathcal{C}_{  \text{{\tiny SONC}}}(\A_+,\A_-)=\mathcal{C}_{  \text{{\tiny SONC}}}(\psi(\A_+),\psi(\A_-)) \,,\quad  \mathcal{C}(\A_+,\A_-)=\mathcal{C}(\psi(\A_+),\psi(\A_-)) \, . \]

\end{itemize}
\end{remark}

\subsection{Decomposing the critical system}\label{subsec:decomposing:critical:system}
This subsection is devoted to finding SONC decompositions of signomials that have positive singular zeros and its signed support is nonseparable. We adapt key ideas appearing in the proof of \cite[Lemma 3.7]{Wang_Nonneg} and rephrase the techniques in terms of the critical system of the signomial we want to decompose. The property that links SONC decompositions and the critical system is given in the following lemma.

\begin{lemma}\label[lem]{lemma:singular:circuit:copositive}
    If $f$ is a circuit signomial with $\Sing_{>0}(f)\neq \varnothing$, then $f$ is copositive.
\end{lemma}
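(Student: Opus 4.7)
The plan is to treat the two cases in the definition of a circuit separately. If $\#\A = \#\A_+ = 1$, then $f = c_a x^a$ with $c_a > 0$, which is manifestly copositive, so we are done. For the remaining case we have $\A_+ = \vertices(\conv(\A))$ forming a simplex of some dimension $d$ and $\A_- = \{b\}$ with $b \in \relint(\conv(\A_+))$; writing $\A_+ = \{a_1,\dots,a_{d+1}\}$, the signomial takes the form
\[ f(x) = \sum_{i=1}^{d+1} c_i\, x^{a_i} - d\, x^{b}, \qquad c_i,\, d > 0. \]

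By \Cref{rk:tricks:SONC:decomposition}, we may assume without loss of generality that the support is full dimensional (so $d=n$) and that the positive singular zero is $x_\star = \mathbbm{1}$. The first step is then to translate the critical system \eqref{eq:crit:system:f} into geometric information on the coefficients: the equations
\[ \sum_{i=1}^{n+1} c_i = d, \qquad \sum_{i=1}^{n+1} c_i\, a_i = d\, b \]
force $b = \sum_{i=1}^{n+1} (c_i/d)\, a_i$ with $\sum_{i=1}^{n+1}(c_i/d) = 1$. Since $\{a_1,\dots,a_{n+1}\}$ are affinely independent, barycentric coordinates are unique, hence $\lambda^b_{a_i} = c_i/d$ for every $i$.

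The second step is to apply the weighted AM--GM inequality. For any $x \in \R^n_{>0}$, the positive reals $y_i := x^{a_i}$ and weights $\lambda^b_{a_i}$ (which are positive and sum to $1$) satisfy
\[ \prod_{i=1}^{n+1} y_i^{\lambda^b_{a_i}} \;\leq\; \sum_{i=1}^{n+1} \lambda^b_{a_i}\, y_i. \]
Since $\prod_i y_i^{\lambda^b_{a_i}} = x^{\sum_i \lambda^b_{a_i} a_i} = x^{b}$ and $\lambda^b_{a_i} = c_i/d$, multiplying through by $d$ yields
\[ d\, x^{b} \;\leq\; \sum_{i=1}^{n+1} c_i\, x^{a_i}, \]
which is precisely $f(x) \geq 0$ on $\R^n_{>0}$.

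There is no real obstacle here: the only thing to be careful about is that the reduction in \Cref{rk:tricks:SONC:decomposition} does not alter copositivity or the existence of a positive singular zero, which is exactly what that remark guarantees, and that the coefficient $d$ and the coefficients $c_i$ remain strictly positive so that $\{c_i/d\}$ really are the (positive) barycentric coordinates of $b \in \relint(\conv(\A_+))$. Once these bookkeeping items are in place, weighted AM--GM delivers the conclusion immediately.
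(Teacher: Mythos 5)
Your proof is correct, but it takes a genuinely different route from the paper's. The paper's proof is a one-liner that offloads the work to the literature: it cites \cite[Proposition 3.3]{wolff_nonneg}, which asserts that a full-dimensional circuit signomial has exactly one critical point in $\R^n_{>0}$ and that it is always a (global) minimum; hence a positive singular zero forces the minimum value to be $0$. You instead give a self-contained argument: after the same reduction to $\mathbbm{1}\in\Sing_{>0}(f)$ and full dimensionality via \Cref{rk:tricks:SONC:decomposition}, you read off from the critical system \cref{eq:crit:system:f} that the normalized coefficients $c_i/d$ are exactly the barycentric coordinates $\lambda^b_{a_i}$ of $b$, and then apply the weighted AM--GM inequality to conclude $d\,x^b\le\sum_i c_i\,x^{a_i}$. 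Both arguments rest on the same underlying mechanism (the circuit-number / AM--GM phenomenon that the paper itself rederives in \Cref{example:circuit:number}), but yours is elementary and simultaneously yields the circuit-number characterization at the boundary, whereas the paper's is shorter at the cost of an external citation. Two small presentational points: the letter $d$ is overloaded as both the ambient dimension and the negative coefficient (after the reduction you set the dimension to $n$, but the clash is still visible earlier); and the case $\#\A=\#\A_+=1$ is in fact vacuous under the hypothesis $\Sing_{>0}(f)\neq\varnothing$, since a positive monomial has no positive singular zero — your handling of it is harmless, but you could have simply noted it does not occur.
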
 
\begin{proof} By \Cref{lemma:affine:transformation:Adiscriminant} and \Cref{rk:affine:transformation:lower:dimension}, we can assume that $f$ is a full dimensional circuit. In this case, $f$ has exactly one extremal point in $\R^n_{>0}$ which is always a minimum \cite[Proposition 3.3]{wolff_nonneg}. So the existence of a positive singular zero implies that this minimum attains value zero and the signomial is copositive. \end{proof}

In light of \Cref{lemma:singular:circuit:copositive}, we will find a SONC decomposition of a signomial $f$ having a positive singular  zero $x_*\in\Sing_{>0}(f)$ by decomposing the critical system as a sum of critical systems of circuit signomials with $x_*$ as a positive singular zero. We start by writing the critical system in terms of the barycentric coordinates of the points in $\A_-$.

\begin{lemma}\label[lem]{lemma:bary:coord}
Let $\A_+,\A_-\subseteq \R^n$ be disjoint finite sets such that  $\A_-\neq \varnothing$ and $(\A_+,\A_-)$ is a full dimensional extended circuit. 
Let $f\in \mathcal{S}(\A_+,\A_-)$ have vector of nonsigned coefficients $c\in \R^{\A_+\cup \A_-}_{>0}$. 
Then $\mathbbm{1}\in\Sing_{>0}(f)$ if and only if 
\begin{equation}\label{eq:crit:syst:simplex:poly}
    c_a=\sum_{b\in \A_-}\lambda_{a}^{b}c_b\quad \quad \text{ for all }a\in \A_+,
\end{equation}
where $\lambda_{a}^{b}$ is the $a$-th barycentric coordinate of $b$ with respect to $\A_+$. 
\end{lemma}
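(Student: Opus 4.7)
The plan is to unpack the critical system of $f$ at $\mathbbm{1}$ in a form amenable to the barycentric coordinates of the elements of $\A_-$ with respect to $\A_+$. As observed in \Cref{rk:affine:nonnegative} (see \eqref{eq:crit:system:f}), $\mathbbm{1} \in \Sing_{>0}(f)$ is equivalent to the pair of conditions
\[ \sum_{a \in \A_+} c_a = \sum_{b \in \A_-} c_b, \qquad \sum_{a \in \A_+} c_a\, a = \sum_{b \in \A_-} c_b\, b. \]
Writing $\hat{a} = (1,a)^\top \in \R^{n+1}$ as in \Cref{rk:barycentric}, these two equations collapse into the single linear identity
\[ \sum_{a \in \A_+} c_a\, \hat{a} \;=\; \sum_{b \in \A_-} c_b\, \hat{b} \]
in $\R^{n+1}$, which is precisely the statement that the matrix $C$ from \cref{eq:crit:syst:matrix:form} annihilates $c$.

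Next, I would use the defining property of barycentric coordinates: for every $b \in \A_- \subseteq \conv(\A_+)$, the coordinates $\lambda^b_a$ satisfy $\hat{b} = \sum_{a \in \A_+} \lambda^b_a\, \hat{a}$, exactly because the augmentation with a row of ones simultaneously encodes $b = \sum_a \lambda^b_a a$ and $\sum_a \lambda^b_a = 1$. Substituting this into the right-hand side of the displayed identity, I get
\[ \sum_{a \in \A_+} c_a\, \hat{a} \;=\; \sum_{b \in \A_-} c_b \sum_{a \in \A_+} \lambda^b_a\, \hat{a} \;=\; \sum_{a \in \A_+} \Bigl(\sum_{b \in \A_-} \lambda^b_a\, c_b\Bigr) \hat{a}. \]

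Since $(\A_+,\A_-)$ is a full dimensional extended circuit, $\A_+$ consists of the $n+1$ vertices of an $n$-dimensional simplex, so the vectors $\{\hat{a} : a \in \A_+\} \subseteq \R^{n+1}$ form a basis of $\R^{n+1}$. Comparing coefficients in the basis $\{\hat{a} : a \in \A_+\}$ then yields the equivalence
\[ c_a \;=\; \sum_{b \in \A_-} \lambda^b_a\, c_b \qquad \text{for every } a \in \A_+, \]
which is exactly \eqref{eq:crit:syst:simplex:poly}. Conversely, multiplying \eqref{eq:crit:syst:simplex:poly} by $\hat{a}$ and summing over $a \in \A_+$ recovers the displayed identity, hence both critical equations. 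There is no genuine obstacle here; the only subtlety is invoking the full dimensionality hypothesis to ensure that $\{\hat{a} : a \in \A_+\}$ is a basis, so that coefficient comparison is legitimate. The extended-circuit assumption (rather than circuit) is used only to allow $\#\A_- > 1$, which is harmless since all manipulations are linear in $c_b$ for $b \in \A_-$.
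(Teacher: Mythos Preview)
Your proof is correct and follows essentially the same approach as the paper. The paper phrases the argument in matrix language—observing via \Cref{rk:barycentric} that the reduced row echelon form of $C$ is $(\mathrm{Id}_{n+1}\mid -B)$ where $B$ collects the barycentric coordinates, and then reading off \eqref{eq:crit:syst:simplex:poly} from $C'c=0$—whereas you spell out the same linear algebra by substituting $\hat{b}=\sum_a \lambda^b_a\hat{a}$ and comparing coefficients in the basis $\{\hat{a}:a\in\A_+\}$; the two arguments are equivalent.
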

\begin{proof} 
By hypothesis $\A_+=\{a_1,\dots,a_{n+1}\}$ is an affinely independent set. 
By letting $\A_-=\{b_1,\dots,b_m\}$, we construct the matrix  $B\in \R^{(n+1)\times m}$ 
whose columns contain the barycentric coordinates of  $b_1,\dots,b_m$. 
Then, by \Cref{rk:barycentric},  the reduced row echelon form of the matrix $C$ in \cref{eq:crit:syst:matrix:form}  is
    \[
 C'\coloneqq   \begin{pmatrix}[c|c]
        {\rm Id}_{n+1} &   -B
    \end{pmatrix}\in \R^{(n+1)\times (n+1+m)}\, .  
    \]
As the zeros of the critical system depend on  $C$  only up to linear combinations of its rows, $\mathbbm{1}\in\Sing_{>0}(f)$ if and only if $C' \diag(c)=0$ and the statement follows. 
\end{proof}

 In the next proposition, we determine a SONC decomposition for any singular signomial whose signed support is an extended circuit. 

\begin{proposition}[SONC decomposition for extended circuits]\label[prop]{prop:simplexNP:pos:sing:SONC}
  Let $\A_+,\A_-\subseteq \R^n$ be disjoint finite sets such that $\A_-\neq \varnothing$ and $(\A_+,\A_-)$ is an extended circuit.
  For every $b\in \A_-$, let 
  $\Gamma_{b}$  be the unique face of $\conv(\A_+)$ such that $b\in \relint(\Gamma_{b})$. 

 If $f\in\mathcal{S}(\A_+,\A_-)$ is such that  $\Sing_{>0}(f)\neq \varnothing$, then $f$ admits a SONC decomposition
\[
f=\sum_{b\in \A_-}q_b \, ,
\]
where $q_b$ is a circuit signomial with signed support $(\A_+\cap\Gamma_{b},\{b\})$.
\end{proposition}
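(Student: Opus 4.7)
The plan is to construct the decomposition explicitly, letting each $q_b$ be the circuit whose coefficients come from the barycentric coordinates of $b$ with respect to $\A_+$. First, by \Cref{rk:tricks:SONC:decomposition}, I reduce to the case where $(\A_+,\A_-)$ is full dimensional and $\mathbbm{1}$ is a positive singular zero of $f$, so that $\A_+=\{a_1,\dots,a_{n+1}\}$ is affinely independent and is the vertex set of $\Delta$. Writing $f=\sum_{a\in\A_+}c_a\,x^a-\sum_{b\in\A_-}c_b\,x^b$, \Cref{lemma:bary:coord} gives the identity
\[
c_a = \sum_{b\in\A_-}\lambda_a^b\, c_b \qquad \text{for every } a\in\A_+,
\]
where $\lambda_a^b$ is the $a$-th barycentric coordinate of $b$ with respect to $\A_+$. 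Since $b\in\relint(\Gamma_{b,\Delta})$, the standard description of the barycentric coordinates of a point in a face of a simplex gives $\lambda_a^b=0$ for $a\notin\Gamma_{b,\Delta}$ and $\lambda_a^b>0$ for $a\in\A_+\cap\Gamma_{b,\Delta}$.

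Guided by this, the candidate I propose is
\[
q_b \;:=\; \sum_{a\in\A_+\cap\Gamma_{b,\Delta}}\lambda_a^b\, c_b\, x^a \;-\; c_b\, x^b.
\]
By construction, the signed support of $q_b$ is $(\A_+\cap\Gamma_{b,\Delta},\{b\})$, and since $\A_+\cap\Gamma_{b,\Delta}=\vertices(\Gamma_{b,\Delta})$ with $b\in\relint(\Gamma_{b,\Delta})$, this is a circuit signomial. The identity $f=\sum_{b\in\A_-}q_b$ then follows by swapping the order of summation and applying the barycentric identity displayed above.

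It remains to prove that each $q_b$ is copositive, and here I will invoke \Cref{lemma:singular:circuit:copositive}: for a circuit signomial, a single positive singular zero already forces copositivity. Using the defining properties of barycentric coordinates, namely $\sum_{a}\lambda_a^b=1$ and $\sum_{a}\lambda_a^b a=b$ (where the sums range over $a\in\A_+\cap\Gamma_{b,\Delta}$), a direct computation yields $q_b(\mathbbm{1})=c_b\bigl(\sum_a\lambda_a^b-1\bigr)=0$ and $x_i\partial_{x_i}q_b(\mathbbm{1})=c_b\bigl(\sum_a\lambda_a^b a_i-b_i\bigr)=0$ for every $i\in[n]$, so $\mathbbm{1}\in\Sing_{>0}(q_b)$. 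This completes the SONC decomposition. I do not foresee any serious obstacle: once the barycentric candidate is written down, every remaining check is a routine computation, and the only non-trivial input is \Cref{lemma:bary:coord}, which supplies precisely the constraint on the coefficients that makes the construction consistent.
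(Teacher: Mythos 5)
Your proof is correct and takes essentially the same route as the paper: reduce via \Cref{rk:tricks:SONC:decomposition} to $\mathbbm{1}\in\Sing_{>0}(f)$ and full dimension, read off the barycentric decomposition of the coefficients from \Cref{lemma:bary:coord}, define $q_b$ with those coefficients, and invoke \Cref{lemma:singular:circuit:copositive}. The only cosmetic difference is that you verify $\mathbbm{1}\in\Sing_{>0}(q_b)$ by direct computation, whereas the paper reapplies \Cref{lemma:bary:coord} to $q_b$.
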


\begin{proof}
Write $f=\sum_{a\in\A_+}c_a\, x^a-\sum_{b\in\A_-}c_b\, x^b$ with $c_a>0$ and $c_b>0$ for all $a\in\A_+$ and $b\in \A_-$. By \Cref{rk:tricks:SONC:decomposition}, we assume without loss of generality that $\mathbbm{1}\in\Sing_{>0}(f)$ and that $(\A_+,\A_-)$ is full dimensional. 
As $\mathbbm{1}\in\Sing_{>0}(f)$, with the notation of \Cref{lemma:bary:coord}, we have that \cref{eq:crit:syst:simplex:poly} holds and in particular, for every $a\in \A_+$,   $\lambda_{a}^{b}>0$ for  some $b\in \A_-$. For every $b\in \A_-$, we define
\begin{equation}\label{eq:SONC:decomposition:simplex}
    q_b\coloneqq\sum_{a\in \A_+}\lambda_a^b\, c_b\, x^a - c_b\, x^b=c_b\left(\sum_{a\in \A_+}\lambda_a^bx^a-x^b\right)\, .
\end{equation}
Clearly, $f=\sum_{b\in \A_-}q_b$ by \cref{eq:crit:syst:simplex:poly}. The support of $q_b$ consists of $b$ and 
all $a\in\A_+$ with $\lambda_{a}^{b}>0$. The latter are exactly the vertices of $\Gamma_{b}$, which is a simplex, and hence $q_b$ is a circuit signomial. Finally, applying \Cref{lemma:bary:coord}  to $q_b$,  we find that  $\mathbbm{1}\in\Sing_{>0}(q_b)$  and hence by \Cref{lemma:singular:circuit:copositive}, $q_b$ is copositive. 
\end{proof}

Our next goal is to show that signomials with nonseparable signed support that have a positive singular zero decompose as a sum of signomials whose signed support is an extended circuit. Hence by \Cref{prop:simplexNP:pos:sing:SONC}, these signomials admit a SONC decomposition.
We show in \Cref{{lemma:decomposition:from:nonnegative:solution},lemma:nonneg:sol:nonseparable:signomial}
that this holds if the positive singular zero is $\mathbbm{1}$, and the general case is treated in \Cref{thm:nonseparable:singular:polynomial:SONC}.

\begin{definition}\label[def]{def:Z}
Let $\A_+,\A_-\subseteq \R^n$  be disjoint finite sets such that $\A=\A_+\cup\A_-$ is full dimensional. 
Let $\Lambda$ be a set of full dimensional simplices  with vertices in $\A_+$ satisfying $\A_-\subseteq \Delta$  for all $\Delta \in\Lambda$. Given  $c\in \R^\A_{>0}$, we let 
$\mathcal{Z}(\Lambda,c)\subseteq \R^{\Lambda}_{\geq 0}$ be  the set of solutions  $\delta\in \R^{\Lambda}_{\geq 0}$ to the linear system
\begin{equation}\label{eq:def:Z}
c_a=\sum\limits_{\substack{\Delta\in \Lambda \\ a\in \vertices(\Delta)}}\delta_\Delta\left(\sum\limits_{b\in \A_-}\lambda_{a,\Delta}^b\, c_b\right)\qquad\text{ for }a\in \A_+\,, 
\end{equation}
 where $\lambda_{a,\Delta}^b\in [0,1]$ are the barycentric coordinates of $b\in \A_-$ with respect to $\vertices(\Delta)$.
\end{definition}

Observe that if $\mathcal{Z}(\Lambda,c)\neq \varnothing$, then  every $a\in\A_+$ has to be a vertex of some simplex $\Delta\in \Lambda$.

\begin{lemma}\label[lem]{lemma:decomposition:from:nonnegative:solution}
With the assumptions and notation in \Cref{def:Z}, for $\Delta\in \Lambda$, we denote by $\Gamma_\Delta$  the smallest face of $\Delta$ containing $\A_-$. Let $f\in \mathcal{S}(\A_+,\A_-)$ have vector of nonsigned coefficients $c\in \R^{\A}_{>0}$ and satisfy  $\mathbbm{1}\in \Sing_{>0}(f)$. 
Then 
the following statements hold:
\begin{enumerate}[label=(\roman*)]
    \item  $\mathcal{Z}(\Lambda,c)\subseteq\mathcal{H}\coloneqq\{\delta\in \R^{\Lambda}_{\geq 0} : \sum_{\Delta\in \Lambda} \delta_\Delta=1\}$.
    \item If $\delta\in \mathcal{Z}(\Lambda,c)$,  then 
    for each $\Delta\in \Lambda$ such that $\delta_\Delta>0$, the signomial
        \[
q_\Delta=\delta_\Delta \left(\sum_{a\in\vertices(\Gamma_\Delta)}\Big(\sum_{b\in\A_-}\lambda_{a,\Delta}^b c_b\Big)x^a-\sum_{b\in\A_-}  c_b\, x^b \right)
    \]
satisfies $\mathbbm{1}\in \Sing_{>0}(q_\Delta)$ and its signed support $(\vertices(\Gamma_\Delta),\A_-)$ is an extended circuit. Furthermore, 
\[f=\sum_{\Delta\in
\Lambda\colon \delta_\Delta>0}q_\Delta\] and hence $f$ is SONC.
\end{enumerate}
\end{lemma}
\begin{proof} 
We start by showing part (i). As $\mathbbm{1}\in\Sing_{>0}(f)$, we have that $\sum_{a\in\A_+}c_a=\sum_{b\in\A_-}c_b$ by \eqref{eq:crit:system:f}. 
Then, for $\delta\in \mathcal{Z}(\Lambda,c)$, we obtain
 \begin{align*}
     0\neq \sum_{a\in \A_+}c_a &=\sum_{a\in \A_+}\left(\sum_{\Delta\in \Lambda\colon a\in \vertices(\Delta)}\delta_\Delta\left(\sum_{b\in \A_-}\lambda_{a,\Delta}^bc_b\right)\right) \\
     &=\sum_{\Delta\in \Lambda}\delta_\Delta\left(\sum_{b\in \A_-}\left(\sum_{a\in \vertices(\Delta)}\lambda_{a,\Delta}^b\right)c_b\right)=\left(\sum_{\Delta\in \Lambda}\delta_\Delta\right)\left(\sum_{a\in\A_+}c_a\right)\,,
 \end{align*}
 from where we deduce part (i). 
 
To show part (ii), as $\lambda_{a,\Delta}^b>0$ if $a\in \vertices(\Gamma_\Delta)$, the signed support of $q_\Delta$   is as given and is an extended circuit. \Cref{lemma:bary:coord} immediately gives that 
 $\mathbbm{1}\in \Sing_{>0}(q_\Delta)$. Hence by \Cref{prop:simplexNP:pos:sing:SONC}, each $q_\Delta$ is SONC. Using \cref{eq:def:Z} and part (i), we obtain that
\begin{align*}
    \sum_{\Delta\in \Lambda\colon \delta_\Delta>0}q_\Delta &= \sum_{\Delta\in \Lambda\colon \delta_\Delta>0} \left(\sum_{a\in\vertices(\Gamma_\Delta)}\delta_\Delta  \left(\sum_{b\in\A_-}\lambda_{a,\Delta}^b c_b\right)x^a-\sum_{b\in\A_-}  \delta_\Delta  c_b\,x^b \right)\\
     &= \sum_{a\in \A_+} \left(\sum\limits_{\Delta\in \Lambda \colon a\in\vertices(\Gamma_\Delta)}\delta_\Delta\left(\sum_{b\in\A_-}\lambda_{a,\Delta}^b c_b\right)x^a\right)-\sum_{b\in\A_{-}}c_b\,x^b  \\
      &=\sum_{a\in \A_+} c_ax^a-\sum_{b\in\A_{-}}c_b\,x^b  = f\,.
\end{align*}
 In particular $f$ is SONC as well, and part (ii) is shown. 
 \end{proof}

\begin{lemma}\label[lem]{lemma:nonneg:sol:nonseparable:signomial}
 Let $\A_+,\A_-\subseteq \R^n$ be disjoint finite sets such that $\A=\A_+\cup \A_-$ is full dimensional. Assume  $\A_-\subseteq D$ for some  $D\in\mR_n(\A_+)$.
   Let $f\in \mathcal{S}(\A_+,\A_-)$ have vector of nonsigned coefficients $c\in \R^{\A}_{>0}$ and  satisfy  $\mathbbm{1}\in \Sing_{>0}(f)$. 
  By letting $\Lambda\coloneqq\Lambda(\A_+,D)$, c.f. \eqref{eq:lambdaD}, the set $\mathcal{Z}(\Lambda,c)$ in \Cref{def:Z} satisfies
 \[\mathcal{Z}(\Lambda,c)  \neq\varnothing \, .  \]
 In particular, $f$ is SONC.
\end{lemma}

\begin{proof}
The last statement of the lemma follows from \Cref{lemma:decomposition:from:nonnegative:solution} once we have shown that  $\mathcal{Z}(\Lambda,c)  \neq\varnothing$. 
Let $\lambda_{a,\Delta}^b\in [0,1]$ be the barycentric coordinates of $b\in \A_-$ with respect to $\vertices(\Delta)$ for $\Delta\in \Lambda$.
If $\#\Lambda=1$, then $\#\A_+=n+1$  by \Cref{lemma:correspondance}, and hence $(\A_+,\A_-)$ is an extended circuit. The statement follows from \cref{lemma:bary:coord}. Assume thus that $\#\Lambda>1$ and hence $\#\A_+>n+1$.

We denote by $M=(m_{a,\Delta})\in \R^{\A_+\times \Lambda}$ the coefficient matrix of \cref{eq:def:Z}, where
\[
m_{a,\Delta}=\begin{cases}
    \sum\limits_{b\in \A_-}\lambda_{a,\Delta}^b\, c_b & \text{if }  a\in \vertices(\Delta)\\
    0 & \text{otherwise,}
\end{cases}
\]
such that 
 the system \eqref{eq:def:Z} becomes $M\delta=c^+$ with $c^+=\{c_a\}_{a\in\A_+}$. For every $\Delta\in \Lambda$, 
 let $M_\Delta$ be the matrix obtained from $M$ by removing the  $\Delta$-column and all rows  for which $m_{a,\Delta}\neq 0$. 
Then the subsystem of \eqref{eq:def:Z} obtained by removing the equations involving the variable $\delta_\Delta$ takes the form 
\[
M_\Delta\hat{\delta}=c^+_\Delta\, , \qquad \hat{\delta} \in \R^{\Lambda\setminus \{\Delta\}}\, , 
\]
 where $c^+_\Delta$ is obtained from $c^+$ in the obvious way. By \cite[Lemma 3.4]{Wang_Nonneg}, if we show that
\begin{enumerate}
    \item $\rank(M)>1$,
    \item $\rank(M_\Delta)=\rank(M)-1$ for all $\Delta\in \Lambda$,
    \item $M\delta=c^+$ admits a solution in $\R^\Lambda$,
    \item $M_\Delta\hat{\delta}=c^+_\Delta$ has a  solution in $\R^{\Lambda\setminus \{\Delta\}}_{\geq 0}$ for all $\Delta\in \Lambda$,
\end{enumerate}
then $\mathcal{Z}(\Lambda,c)\neq\varnothing$. We distinguish two cases depending on how $\A_-$ sits in $D$.

\smallskip
\noindent 
\textbf{Case 1. } We start by assuming that $\A_-$ is not contained in any facet of $D$ and verify that conditions (1)-(4) hold. By \Cref{lemma:facet:conditions:nonseparable}, $\A_-$ is not contained in a facet of any  $\Delta\in \Lambda$. This implies that $m_{a,\Delta}>0$ if and only if   $a\in \vertices(\Delta)$, since $\lambda_{a,\Delta}^b>0$ for some $b\in \A_-$. Therefore, $M_\Delta$ has $\#\A_+-(n+1)$ rows.

Consider  the injective map $\varphi_\Delta$ in \Cref{lemma:correspondance} and recall that every simplex in its image has exactly one vertex that is not a vertex of $\Delta$. 
An important characteristic of $M_\Delta$    is then that
\begin{equation} \label{eq:MDelta}\small
    \text{for every $a\in \A_+\setminus \im(\varphi_\Delta)$, the only nonzero entry of the $\varphi_\Delta(a)$-column of $M_\Delta$ is $m_{a,\Delta}$}\,. 
\end{equation}
Using this, the $\#\A_+-(n+1)$ columns of $M_\Delta$  indexed by the simplices in  $\im(\varphi_\Delta)$ are linearly independent, and hence
\[\rank(M_\Delta)=\#\A_+-(n+1)\, .\] 
Any nonzero linear combination of the columns in $\im(\varphi_\Delta)$ leads to a vector with nonzero $a$-coordinate for some $a\in\A_+\setminus\vertices(\Delta)$. As the $\Delta$-column of $M$ is zero at all such entries, we get that the columns of $M$ indexed by the simplices $\{\Delta\}\cup\im(\varphi_\Delta)$ are linearly independent. Therefore, it must hold that 
\[\rank(M)\geq \rank(M_\Delta)+1=\#\A_+-n\,.\] 
To show that $\rank(M)=\#\A_+-n$, 
it is enough to show that $\im(M)$ is contained in a vector space of dimension $\#\A_+-n$. 
To this end, consider the matrix $N\in \R^{[n]\times \A_+}$ with  $a$-column equal to the vector $a-\bar{b}$, where 
\[
\bar{b}=\frac{1}{\sum\limits_{b\in \A_-}c_b}\left(\sum_{b\in \A_-}c_b\,  b\right).
\]
Since $\bar{b}\in \conv(\A_+)$ and $\conv(\A_+)$ is $n$-dimensional, we have $\rank(N)=n$. For any $\Delta\in \Lambda$, using the definition of  barycentric coordinates, the $\Delta$-column of the product $NM\in \R^{[n]\times \Lambda}$ is
\begin{align*}
    \sum_{a\in \A_+} &\left(a-\frac{1}{\sum\limits_{b\in \A_-}c_b} \left(\sum_{b\in \A_-}c_b\:b\right) \right) m_{a,\Delta}
    \\
    & =\sum_{a\in \vertices(\Delta)}\left(a-\frac{1}{\sum\limits_{b\in \A_-}c_b}\left(\sum_{b\in \A_-}c_b \:b\right)\right)\left(\sum_{b\in \A_-}\lambda_{a,\Delta}^b\:c_b\right)\\
    &=\left(\sum_{b\in \A_-}\sum_{a\in \vertices(\Delta)}\lambda_{a,\Delta}^b\:c_b\:a\right)-\frac{\sum\limits_{b\in\A_-}\left(\sum\limits_{a\in \vertices(\Delta)}\lambda_{a,\Delta}^b\right)c_b}{\left(\sum\limits_{b\in\A_-}c_b\right)}\left(\sum_{b\in\A_-}c_b\:b\right)\\
    & =\left(\sum_{b\in \A_-}\sum_{a\in \vertices(\Delta_k)}\lambda_{a,\Delta}^b\:c_b\:a\right)-\left(\sum_{b\in\A_-}c_b\:b\right)
    =\sum_{b\in \A_-}c_b\left(\sum_{a\in \vertices(\Delta)}\lambda_{a,\Delta}^b\:a-b\right)=0 \, .
\end{align*}
 This shows that $NM=0$, so $\im(M)\subseteq \ker(N)$.  As $\dim(\ker(N))=\#\A_+-n$, we conclude that $\im(M)=\ker(N)$ and hence $\rank(M)= \#\A_+-n$. In particular, conditions (1) and (2) hold.

To show that (3) holds, it is enough to show that $N c^+=0$. As  $\mathbbm{1}\in \Sing_{>0}(f)$,  this follows from a simple computation using \cref{eq:crit:system:f}: 
\begin{align*}
    N c^+ & =\sum_{a\in \A_+}c_a\left(a-\frac{1}{\sum\limits_{b\in \A_-}c_b} \left(\sum_{b\in \A_-}c_b\,  b\right)\right)=\sum_{a\in \A_+}c_a\, a-\frac{\left(\sum\limits_{a\in \A_+}c_a\right)}{\left(\sum\limits_{b\in \A_-}c_b\right)}\left(\sum_{b\in \A_-}c_b\,  b\right)\\
    & = \sum\limits_{a\in\A_+} c_a\, a  - \sum\limits_{b\in \A_-} c_b\,  b =0\, .
\end{align*}
  Finally, for (4), let $\Delta\in \Lambda$
 and  $\hat{\delta}$ be obtained by removing the $\Delta$-entry of the vector $\delta'$ given by 
 \[ \delta'_{\Delta'} = 
\begin{cases}
 0\quad & \text{if } \varnothing=\varphi^{-1}_\Delta(\Delta')  
 \\
   \frac{c_a}{\left(\sum_{b\in \A_-}\lambda_{a,\Delta'}^b c_b\right)} >0 \quad &  
   \text{if } a=\varphi_\Delta^{-1}(\Delta')\,.
\end{cases}
\]
By \eqref{eq:MDelta} and the definition of $M_{a,\Delta'}$, we readily obtain that
$\hat{\delta}$ is a nonnegative solution to $M_\Delta \hat{\delta}=c_\Delta^+$ and (4) holds. 

As the conditions (1)-(4) hold, we conclude  that $\mathcal{Z}(\Lambda,c)\neq\varnothing$ by \cite[Lemma 3.4]{Wang_Nonneg}.

\smallskip
\noindent
\textbf{Case 2. } Assume now that $\A_-\subseteq \Gamma$ for a facet  $\Gamma$ of $D$. By \Cref{lemma:facet:conditions:nonseparable}, $\A_-$ is contained in a facet of some simplex in $\Lambda(\A_+,D)$. We note that the argument of Case 1 cannot be directly applied, as it is no longer true that $m_{a,\Delta}>0$ if and only if $a\in \vertices(\Delta)$. To bypass this problem, we apply a small perturbation to one element of $\A_-$ to reduce to Case 1.
 
Fix any $\Delta\in \Lambda(\A_+,D)$  and choose $b'\in\A_-$ and $v\in \R^n$ such that $b+v\in \intt(D)$. For $\varepsilon\in [0,1]$,  let $b'(\varepsilon)\coloneqq b'+\varepsilon v$ and let $c^+(\varepsilon)\in\R^{\A_+}$ be defined by
\begin{equation}\label{eq:def:c+}
    c^+(\varepsilon)_a\coloneqq\begin{cases}
    c_a+(\lambda_{a,\Delta}^{b'(\varepsilon)}-\lambda_{a,\Delta}^{b'})c_{b'} &\text{  if  }a\in \vertices(\Delta) \\
    c_a&\text{  if  }a\in \A_+\setminus\vertices(\Delta_1)\,.
\end{cases}
\end{equation}
By scaling $v$ if necessary, we can assume that $c^+(\varepsilon)_a>0$ for all $a\in \A_+$ and all $\varepsilon\in [0,1]$. Consider the signomial 
\[
f^\varepsilon\coloneqq\sum_{a\in\A_+}c^+(\varepsilon)_a\, x^a-\sum_{b\in \A_-\setminus\{b'\}}c_b\, x^b-c_{b'}x^{b'(\varepsilon)}\qquad \varepsilon\in [0,1]\,.
\]
For $\varepsilon=0$, we recover the original $f$. 
By construction of $c^+(\varepsilon)$, using that $\mathbbm{1}\in\Sing_{>0}(f)$, \cref{eq:crit:system:f} gives that $\mathbbm{1}\in\Sing_{>0}(f^\varepsilon)$. Write the system  \cref{eq:def:Z} associated with $f^\varepsilon$ as
\begin{equation}\label{eq:linear:system:perturbed}
    M(\varepsilon)\,\delta=c^+(\varepsilon)\, .
\end{equation}

As $b'(\varepsilon)\in\intt(D)$ for all $\varepsilon\in(0,1]$, it follows from Case 1 that  \cref{eq:linear:system:perturbed} admits a nonnegative solution for all $\varepsilon\in(0,1]$. As both $M(\varepsilon)$ and $c^+(\varepsilon)$ depend continuously on the barycentric coordinates of $b'(\varepsilon)$, we have a continuous map
\begin{align*}
    \phi:[0,1]\times \mathcal{H}\cap \R^\Lambda_{\geq 0}
    &\longrightarrow\R^{\A_+}\\
    (\varepsilon,\delta) & \longmapsto M(\varepsilon)  \delta - c^+(\varepsilon)\, ,
\end{align*}
where $\mathcal{H}$ is as in \Cref{lemma:decomposition:from:nonnegative:solution}(i). 
We conclude that the set $\phi^{-1}(0)$ is closed. Given a decreasing sequence $\{\varepsilon_k\}_{k\in\N}\subseteq (0,1]$ converging to 0, by solving \cref{eq:linear:system:perturbed} we obtain a sequence $\{\delta(\varepsilon_k)\}_{k\in\N}\subseteq \mathcal{H}\cap\R^\Lambda_{\geq 0}$ with $\phi(\varepsilon_k,\delta(\varepsilon_k))=0$ for every $k\in\N$. As $\mathcal{H}\cap\R^\Lambda_{\geq 0}$ is compact, there exists a converging subsequence $\{\delta(\varepsilon_{i_k})\}$ with limit $\tilde{\delta}\in \mathcal{H}\cap\R^\Lambda_{\geq 0}$. As $\phi^{-1}(0)$ is closed, we have $\lim_{k\to\infty}(\varepsilon_{i_k},\delta(\varepsilon_{i_k}))=(0,\tilde{\delta})\in \phi^{-1}(0)$.  Hence, $\tilde{\delta}$ is a nonnegative solution of the system \eqref{eq:def:Z} associated with $f$.
\end{proof}

\subsection{Main theorems on nonseparability and SONC decompositions}\label{subsec:thm_SONC}
We have now all the ingredients to show the main results of this work. Recall the definition of the signed discriminant $\nabla_{>0}(\A_+,\A_-)$ in \eqref{eq:DegSignedDiscriminant}, and that we identify a signomial $f$ with its vector of nonsigned coefficients $c$. So we may simply write $f\in \nabla_{>0}(\A_+,\A_-)$.

\begin{theorem}\label[thm]{thm:nonseparable:singular:polynomial:SONC}
 Let $\A_+,\A_-\subseteq \R^n$ be disjoint finite sets such that $(\A_+,\A_-)$ is  nonseparable.
If $f\in \mathcal{S}(\A_+,\A_-)$ satisfies $\Sing_{>0}(f)\neq\varnothing$, then $f$ is SONC. In other words, 
 \[\nabla_{>0}(\A_+,\A_-)\subseteq \mathcal{C}_{\text{{\tiny \rm SONC}}}(\A_+,\A_-)\,.\]
\end{theorem}
\begin{proof}
By \Cref{rk:tricks:SONC:decomposition}, we can assume that $\A=\A_+\cup \A_-$ is full dimensional and that $\mathbbm{1}\in\Sing_{>0}(f)$. \Cref{lemma:decomposition:from:nonnegative:solution,lemma:nonneg:sol:nonseparable:signomial} give  that $f$ is SONC. 
\end{proof}

Next, we show that nonseparable signed supports are SONC, by combining 
\Cref{cor:copositivity:interior} with \Cref{thm:nonseparable:singular:polynomial:SONC}.

\begin{theorem}[Nonseparable supports are SONC]\label[thm]{thm:nonseparable:signed:supports:SONC}
Let $\A_+,\A_-\subseteq \R^n$ be disjoint  nonempty finite sets such that  $\A_-\subseteq \relint(\Gamma)$ for a face $\Gamma$ of $\conv(\A_+)$. 
If $(\A_+^\Gamma,\A_-^\Gamma)$ is nonseparable,  then
\[ \mathcal{C}_{\text{{\tiny \rm SONC}}}(\A_+,\A_-)=\mathcal{C}(\A_+,\A_-)\, .\]
\end{theorem}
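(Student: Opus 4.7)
The plan is to combine the copositivity criterion from \Cref{cor:copositivity:interior} with the SONC decomposition provided by \Cref{thm:nonseparable:singular:polynomial:SONC}(i) for signomials with a positive singular zero. The inclusion $\mathcal{C}_{  \text{{\tiny SONC}}}(\A_+,\A_-) \subseteq \mathcal{C}(\A_+,\A_-)$ is immediate from the definitions, so I only need the reverse inclusion. If $\A_- = \varnothing$, then every $f \in \mathcal{C}(\A_+,\varnothing)$ is a sum of positive monomials, each a degenerate copositive circuit signomial; hence assume $\A_- \neq \varnothing$ and fix $f \in \mathcal{C}(\A_+,\A_-)$ with nonsigned coefficients $c \in \R^\A_{>0}$.

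First I would choose any height function $h \colon \A \to \Z$ lifting $\A_-$, say $h(b) = 1$ for $b \in \A_-$ and $h(a) = 0$ for $a \in \A_+$. The hypothesis $\A_- \subseteq \relint(\conv(\A_+))$ yields both $\A_- \subseteq \relint(\conv(\A))$ and $\vertices(\conv(\A)) \subseteq \A_+$, so \Cref{cor:copositivity:interior} applies. It produces a minimum $t_*$ of the set
\[
S = \bigl\{\, t \in \R_{>0} : F(c \star t^h, x) = 0 \ \text{for some }\, x \in \R^n_{>0}\,\bigr\},
\]
with $t_* \geq 1$ precisely because $f$ is copositive. By definition of $t_*$, the Viro polynomial
\[
f_{t_*} = \sum_{a \in \A_+} c_a\, x^a \;-\; \sum_{b \in \A_-} c_b\, t_*^{h(b)}\, x^b \ \in\ \mathcal{S}(\A_+,\A_-)
\]
satisfies $\Sing_{>0}(f_{t_*}) \neq \varnothing$. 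Since $(\A_+,\A_-)$ is nonseparable, \Cref{thm:nonseparable:singular:polynomial:SONC}(i) gives a SONC decomposition $f_{t_*} = \sum_{i=1}^r q_i$ into copositive circuit signomials.

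To assemble a SONC decomposition of $f$ itself I would then use the identity
\[
f \;=\; f_{t_*} \;+\; \sum_{b \in \A_-} c_b\bigl(t_*^{h(b)} - 1\bigr)\, x^b \;=\; \sum_{i=1}^r q_i \;+\; \sum_{b \in \A_-} c_b\bigl(t_*^{h(b)} - 1\bigr)\, x^b.
\]
Since $t_* \geq 1$ and $h(b) \geq 1$ for every $b \in \A_-$, each coefficient $c_b(t_*^{h(b)} - 1)$ is nonnegative, so every correction term $c_b(t_*^{h(b)} - 1)\, x^b$ is a nonnegative monomial and hence a trivial copositive circuit signomial (singleton positive support, empty negative support). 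Thus the right-hand side exhibits $f$ as a sum of copositive circuit signomials, giving $f \in \mathcal{C}_{  \text{{\tiny SONC}}}(\A_+,\A_-)$.

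The main obstacle has effectively been absorbed into the earlier results: the delicate combinatorial step linking nonseparability with the solvability of the linear system of \Cref{def:Z} in the nonnegative orthant is handled by \Cref{lemma:nonneg:sol:nonseparable:signomial} and feeds into \Cref{thm:nonseparable:singular:polynomial:SONC}(i), while the reduction from copositivity to the singular case is delivered by the Viro-polynomial criterion \Cref{cor:copositivity:interior}. What remains here is a straightforward assembly; the only verification needed is that lifting the $\A_-$-coefficients by $t_*^{h(b)} \geq 1$ produces only manifestly nonnegative correction monomials when we revert to $t = 1$, which is immediate.
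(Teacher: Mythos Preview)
Your proof is correct and follows the same strategy as the paper: reduce to the singular case via the Viro-polynomial criterion (\Cref{cor:copositivity:interior}), invoke \Cref{thm:nonseparable:singular:polynomial:SONC}(i) to obtain a SONC decomposition of $f_{t_*}$, and then pass back to $f$. The only difference is in this last step: the paper unpacks the explicit circuit summands $q_{j,b}$ from \Cref{lemma:decomposition:from:nonnegative:solution} and shows that replacing $t_*^{h(b)}$ by $1$ in the negative term of each keeps it a copositive circuit, whereas you treat the decomposition of $f_{t_*}$ as a black box and add the nonnegative correction monomials $c_b(t_*^{h(b)}-1)x^b$ separately---a slightly cleaner assembly that also handles the boundary case $t_*=1$ without a case split.
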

\begin{proof}
The inclusion $\mathcal{C}_{\text{{\tiny SONC}}}(\A_+,\A_-)\subseteq \mathcal{C}(\A_+,\A_-)$   holds by definition. For the reverse inclusion, we show that for any $f\in \mathcal{S}(\A_+,\A_-)$ 
it holds that 
\[ f\in \mathcal{C}(\A_+,\A_-) \ \Rightarrow \ f^\Gamma\in \mathcal{C}(\A_+^\Gamma,\A_-^\Gamma)
\ \Rightarrow \  f^\Gamma\in \mathcal{C}_{\text{{\tiny \rm SONC}}}(\A_+^\Gamma,\A_-^\Gamma)  \ \Rightarrow \   f\in \mathcal{C}_{\text{{\tiny \rm SONC}}}(\A_+,\A_-) \,.
\]
The first implication is a consequence of \Cref{lemma:copositivity:signs:truncations}. The third implication follows as  positive monomials are by definition copositive circuits. Hence, all we need is to show the second implication. 
In particular,  we can  simply assume that $\Gamma=\conv(\A_+)$ and 
$f\in \mathcal{C}(\A_+,\A_-)$. Moreover, using \Cref{rk:affine:nonnegative} we restrict to the case where 
$\A=\A_+\cup\A_-$ is full dimensional. 

Therefore, we consider a nonseparable and full dimensional signed support $(\A_+,\A_-)$  such that $\A_-\subseteq \intt(\conv(\A_+))$, and let $f\in \mathcal{C}(\A_+,\A_-)$. If $f\in \nabla_{>0}(\A_+,\A_-)$, then $f\in \mathcal{C}_{\text{{\tiny SONC}}}(\A_+,\A_-)$  by \Cref{thm:nonseparable:singular:polynomial:SONC}. If $f\notin \nabla_{>0}(\A_+,\A_-)$, then let $c\in\R^{\A}_{>0}$ be its vector of nonsigned coefficients and consider a height function $h\colon \A\to\R$ lifting $\A_-$. For $f_t$ as in \cref{eq:Viro}, using that $f$ is copositive it follows from \Cref{cor:copositivity:interior} that there exists $t_*>1$ for which $f_{t_*}$ lies in $\nabla_{>0}(\A_+,\A_-)$. We can assume that $\mathbbm{1}\in\Sing_{>0}(f_{t_*})$ by \Cref{rk:tricks:SONC:decomposition}. 

As the signed support is nonseparable, we can choose $D\in \mR_d(\A_+)$  such that $\A_-\subseteq D$. We are now under the hypotheses of \Cref{lemma:nonneg:sol:nonseparable:signomial}, and therefore there exists 
$\delta$ solving \cref{eq:def:Z}. 
For all $\Delta\in \Lambda(\A_+,D)$ such that $\delta_\Delta>0$, we define  
\[
    q_{\Delta,b,t}(x)\coloneqq \delta_\Delta c_b\left(\sum_{a\in\vertices(\Delta)}t_*^{h(b)}\lambda_{a,\Delta}^b x^a-t^{h(b)}x^b\right), 
    \]
where  $\lambda_{a,\Delta}^b$ denotes as usual the barycentric coordinate of $b$ in $\Delta$ for $a\in \vertices(\Delta)$.  By \Cref{lemma:decomposition:from:nonnegative:solution},
\[
f_{t_*}(x)=\sum_{\delta_\Delta>0}\ \sum_{b\in\A_-}q_{\Delta,b,t_*}(x)
\]
is a SONC decomposition of $f_{t_*}$. 
Using that $\sum_{\delta_\Delta>0}\delta_\Delta=1$ by \Cref{lemma:decomposition:from:nonnegative:solution}, 
 we obtain that
    \begin{align*}
    f(x)& =f_{t_*}(x) + \sum_{b\in\A_-} ( t^{h(b)}_*-1)c_b x^b =
   \sum_{\delta_\Delta>0} \ \sum_{b\in\A_-} q_{\Delta,b,t_*}(x)
 + \sum_{\delta_\Delta>0} \delta_\Delta \sum_{b\in\A_-}( t^{h(b)}_*-1)c_b x^b \\ 
        &=
    \sum_{\delta_\Delta>0}\ \sum_{b\in\A_-}q_{\Delta,b,1}(x)\,.
    \end{align*}
As $q_{\Delta,b,1}(x)>q_{\Delta,b,t_*}(x)$  if $\delta_\Delta>0$,   $x\in\R^{n}_{>0}$, and $b\in \A_-$, this shows that $f$ admits a SONC decomposition and hence $f\in \mathcal{C}_{\text{{\tiny SONC}}}(\A_+,\A_-)$ as desired.
\end{proof}

\Cref{thm:nonseparable:singular:polynomial:SONC}  has an important consequence for signomials in the setting of  \Cref{thm:criterion:copositivity}, namely that the set $T$ in the theorem has exactly one element. Recall the definitions surrounding \cref{eq:truncated}.

\begin{theorem}\label[thm]{thm:1solution}
Let $\A_+,\A_-\subseteq \R^n$ be disjoint nonempty finite sets such that  $\A_-\subseteq \relint(\Gamma)$ for a face $\Gamma$ of $\conv(\A_+)$. If $(\A_+^\Gamma,\A_-^\Gamma)$ is nonseparable, then the set $T$ in \Cref{thm:criterion:copositivity} has cardinality $1$.
\end{theorem}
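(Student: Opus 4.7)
The plan is to combine the strict monotonicity of the Viro path $\{f_t^\Gamma\}_{t>0}$ with \Cref{thm:nonseparable:singular:polynomial:SONC}(i), which supplies copositivity for any signomial with nonseparable signed support that admits a positive singular zero.

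First I would simplify the description of $S$ using the hypothesis $\A_-\subseteq\relint(\Gamma)$: no proper face of $\Gamma$ intersects $\A_-$, so the index set $J$ in \Cref{thm:criterion:copositivity} reduces to $\{\Gamma\}$, and hence
\[
S=\{t\in\R_{>0}\,:\,f_t^\Gamma\text{ has a positive singular zero}\}\, .
\]
Nonemptiness of $S$ is already granted by \Cref{thm:criterion:copositivity}, so it remains to prove $|S|\leq 1$.

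I would argue by contradiction: assume $t_1<t_2$ both belong to $S$, and pick a positive singular zero $y_*$ of $f_{t_1}^\Gamma$, so that $f_{t_1}^\Gamma(y_*)=0$. The signomial $f_{t_2}^\Gamma$ has the same signed support $(\A_+^\Gamma,\A_-^\Gamma)$ (the Viro deformation only rescales nonsigned coefficients by positive factors) and a positive singular zero; since $(\A_+^\Gamma,\A_-^\Gamma)$ is nonseparable, \Cref{thm:nonseparable:singular:polynomial:SONC}(i) applies and yields that $f_{t_2}^\Gamma$ is SONC, in particular copositive on $\R^n_{>0}$.

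Now I would exploit monotonicity of the Viro family: since $h$ lifts $\A_-$, we have $h(b)>0$ for every $b\in\A_-$, so for every $x\in\R^n_{>0}$,
\[
f_{t_2}^\Gamma(x)-f_{t_1}^\Gamma(x)=-\sum_{b\in\A_-}c_b\bigl(t_2^{h(b)}-t_1^{h(b)}\bigr)\,x^b<0 \, .
\]
Evaluating at $y_*$ yields $f_{t_2}^\Gamma(y_*)<f_{t_1}^\Gamma(y_*)=0$, contradicting the copositivity of $f_{t_2}^\Gamma$. Hence $|S|=1$. The crucial nontrivial input, and essentially the only obstacle, is the upgrade from ``has a positive singular zero'' to ``is globally nonnegative on $\R^n_{>0}$'' for nonseparable signed supports provided by \Cref{thm:nonseparable:singular:polynomial:SONC}(i); once this is in hand, the rest is an elementary monotonicity argument along the Viro path.
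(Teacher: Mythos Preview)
Your proof is correct and follows essentially the same approach as the paper: both use \Cref{thm:nonseparable:singular:polynomial:SONC} to upgrade the existence of a positive singular zero to copositivity, then derive a contradiction from the strict monotonicity of the Viro family in $t$. Your explicit reduction of $J$ to $\{\Gamma\}$ via the hypothesis $\A_-\subseteq\relint(\Gamma)$ and your direct monotonicity computation are slightly more streamlined than the paper's route (which invokes \Cref{thm:criterion:copositivity}(ii) for the negativity conclusion and also treats the redundant case $t_*=t_{**}$ with distinct $x$-values), but the essential ideas coincide.
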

\begin{proof} By \Cref{thm:criterion:copositivity}, $T$ is nonempty, and $t\in T$ if and only if $f_t^\Gamma\in \nabla_{>0}(\A_+^\Gamma,\A_-^\Gamma)$, since for any other face $\Gamma'\neq\Gamma$ of $\conv(\A_+)$ we have $\A_-\cap\relint(\Gamma')=\varnothing$. 
By \Cref{rk:affine:transformation:lower:dimension} we can assume 
 that $\A^\Gamma=\A_+^\Gamma\cup\A_-^\Gamma$ is full dimensional.  Suppose that $T$ contains two elements $t_*<t_{**}$. Then $f^\Gamma_{t_{**}}$ attains negative values by \Cref{thm:criterion:copositivity}, which contradicts  that $f^\Gamma_{t_{**}}$ should be copositive as $f^\Gamma_{t_{**}}\in\nabla_{>0}(\A_+^\Gamma,\A_-^\Gamma)\subseteq \mathcal{C}_{\text{{\tiny \rm SONC}}}(\A_+^\Gamma,\A_-^\Gamma)$  by \Cref{thm:nonseparable:singular:polynomial:SONC}.
\end{proof}

We conclude this section with a description of the boundary of the copositivity cone for nonseparable signed supports. We 
rely on \cite[Proposition 2.6 and 2.7]{ToricPolya}, which is stated for Laurent polynomials, and hence restrict to this setting as well. Note, however, that \cite[Proposition 2.6 and 2.7]{ToricPolya} could  be extended to signomials by using the irrational affine toric variety as we did in the proof of \Cref{prop:relatively:compact:set}, and with this in place the following result would also hold  for signomials.

\begin{theorem}\label{thm:boundary}
 Let $\A_+,\A_-\subseteq \Z^n$ be disjoint finite sets such that $\A_-\subseteq\relint(\conv(\A_+))$. Then it holds that
\[ \partial \mathcal{C}(\A_+,\A_-) \subseteq \nabla_{>0}(\A_+,\A_-)    \,  \]
with equality if $(\A_+,\A_-)$ is  nonseparable. 
\end{theorem}
\begin{proof}
By \Cref{rk:tricks:SONC:decomposition}, we can assume that $\A=\A_+\cup \A_-$ is full dimensional. 
We use that  $f \in \partial \mathcal{C}(\A_+,\A_-)$ if and only if $f$ is copositive and there exists a nonempty face $\Gamma \subseteq \conv(\mathcal{A})$ and $x \in \R^n_{>0}$ such that $f^{\Gamma}(x) = 0$ (see, for example, \cite[Proposition 2.6 and 2.7]{ToricPolya}). 

Given $f \in \partial \mathcal{C}(\A_+,\A_-)$,  $f$ is copositive and $f(x)=0$ for some $x\in \R^n_{>0}$, as  $\mathcal{A}_- \subseteq \relint(\conv(\mathcal{A}_+))$ implies that $f^\Gamma$ does not vanish if $\Gamma \neq  \conv( \mathcal{A})$. It follows that $f$ has a singular positive zero by \Cref{rk:copositive_zeros}, and hence 
$f \in \nabla_{>0}(\A_+,\A_-)$. 

Assume now that the pair is nonseparable and let $f \in \nabla_{>0}(\A_+,\A_-)$. Then $f(x) = 0$ for some $x \in \R^n_{>0}$ and $f$ is copositive as $\nabla_{>0}(\A_+,\A_-) \subseteq \mathcal{C}_{  \text{{\tiny SONC}}}(\A_+,\A_-)$ by \Cref{thm:nonseparable:singular:polynomial:SONC}. We conclude that $f \in \partial \mathcal{C}(\A_+,\A_-)$.
\end{proof}

\begin{example}\label[ex]{example:elimination:square:end}
\Cref{thm:1solution} allows us to complete the characterization of the coefficients $c_0,\dots,c_4$ for which the signomial in \Cref{example:elimination:square} is copositive. In particular, it implies that exactly one among $t_-,t_+$ extends to a positive zero of the critical system. Algebraic manipulations show  that this is the case for $t_+$  and hence $t_*=t_+$. Therefore, $f$ is copositive if and only if
\[
c_4\leq \sqrt{4c_0c_3}+\sqrt{4c_1c_2}\,.
\]
\end{example}

\section{Computational aspects}\label{sec:computational}

We conclude this paper with a focus on the computational aspects of the copositivity criterion of \Cref{thm:criterion:copositivity}. We start in \Cref{subsec:limitations} by discussing  computational limitations of the method. \Cref{subsec:criterion:nonseparable:supports} builds on \Cref{thm:1solution} and improves the implementation of the criterion on copositivity for signomials with nonseparable signed support. Finally, in \Cref{subsec:implementation:SAGE}, we present the Julia package \texttt{CopositivityDiscriminants.jl}, which provides a proof-of-concept implementation of our methods, and briefly discuss how it compares to other available tools. 

In order to find the value of $t_*$ in \Cref{thm:criterion:copositivity}, which characterizes whether a signomial is copositive, we have to solve a finite collection of  critical systems in $\R_{>0}\times \R^n_{>0}$. For example, for $\Gamma=\conv(A)$, the system is $F(c\star t^h, x)=0$, where $F$ is the critical system of $(\A_+,\A_-)$. 

In this section, we restrict to   Laurent polynomials when we use the results on vertically parametrized polynomial systems from \cite{feliu_henriksson_pascualescudero_generic_consistency}, or when the critical system needs to be solved in practice,  so that symbolic and numerical methods for polynomial system solving can be used.

\subsection{Limitations}\label{subsec:limitations} 
When solving the critical systems in \Cref{thm:criterion:copositivity} with numerical approaches, two main limitations arise. First, the zero set of the critical system might not be zero dimensional, that is, zeros might not be isolated and hence not be found. This problem may lead to an erroneous application of \Cref{thm:criterion:copositivity} as the following example illustrates. 

\begin{example}\label[ex]{example:wrong:outcome:criterion} 
    Consider the polynomial $f=(x_1-2)^2+(x_2-2)^2-1$, whose positive zero set is the circle of radius 1 centred at $(2,2)$. Every $x\in V_{>0}(f)$ is a positive singular zero of the copositive polynomial $f^2$. The polynomial $g\coloneqq f^2-0.001(56x_1+56x_2+8x_1x_2^2+8x_1^2x_2+8x_1^3+8x_2^3)$ attains negative values at all $x\in V_{>0}(f)$. Let
{\small \begin{align*}
    g_t &=  49-56(1+0.001)\,t\,x_1-56(1+0.001)\,t\,x_2+32x_1x_2-8(1+0.001)\,t\,x_1x_2^2\\  & -8(1+0.001)\,t\,x_1^2x_2 +2x_1^2x_2^2+30x_1^2-8(1+0.001)\,t\,x_1^3+x_1^4+30x_2^2-8(1+0.001)\,t\,x_2^3+x_2^4
 \end{align*}}%
    be obtained as in \eqref{eq:Viro} for $g$ and height function $h$  taking the value $1$ when nonzero. The signed support of  $g$   satisfies the assumptions of \Cref{cor:copositivity:interior}. Morever, for $t_0=\tfrac{1}{1.001}<1$, $g_{t_0}=f^2$, and hence $g_{t_0}$ has a one-dimensional set of  positive singular zeros. This shows that $t_0\in T$ and in particular that $g$ attains negative values on $\R^2_{>0}$ by \Cref{cor:copositivity:interior}. However, solving the critical system associated with $g_t$ with \texttt{HomotopyContinuation.jl}, 
    we obtain 36 complex zeros, of which only one is real and positive. The  value of $t$ of this zero is close to $1.00123>1$. This computation combined with \Cref{cor:copositivity:interior} would lead us to wrongly conclude that the polynomial $g$ is copositive.
\end{example}

\begin{remark}\label{rmk:wrong:conclusion}
If zeros of the critical system are missed during a computation, we might not find the true mininum of the set $T$. However, if the minimal value of $t$ we obtain already is smaller than $1$, then we are certain that so is the true minimum and hence that the polynomial attains negative values. Therefore,  wrong conclusions may only arise when the minimum value of $t$ that we obtain is larger than $1$.
\end{remark}

The pathology illustrated in \Cref{example:wrong:outcome:criterion} cannot occur for generic vectors of nonsigned coefficients, as in this case the zero sets are   finite dimensional. This is a consequence of the fact that critical systems of signed supports with integer coordinates are vertically parametrized systems in the sense of \cite{feliu_henriksson_pascualescudero_generic_consistency,HelminckRen22}.

\begin{proposition}\label[prop]{prop:generic:coefficients:isolated}
Let $\A_+,\A_- \subseteq \Z^n$ be disjoint nonempty finite sets such that  $\A=\A_+\cup \A_-$ is full dimensional and $\A_-\subseteq \intt(\conv(\A_+))$. Let $h\colon\A\to\Z$ be a height function lifting $\A_-$ and  $F$ be the critical system of $(\A_+,\A_-)$. Then, for generic   $c\in\R^{\A}_{>0}$,  
$F(c\star t^h,x)=0$ for  finitely many $(t,x)\in \C^*\times (\C^*)^{n}$.
\end{proposition}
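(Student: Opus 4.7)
The plan is to exhibit $F(c\star t^h, x)=0$ as a vertically parametrised system in the $n+1$ variables $(t,x)$ with parameters $c$, and to apply the same dimensional machinery used in the proof of \Cref{prop:zariski:closure:Adiscr}. Concretely, I would rewrite
\begin{equation*}
F(c\star t^h, x) \;=\; C\bigl(c\star (t,x)^{\tilde A}\bigr),
\end{equation*}
where $\tilde A\in \Z^{(n+1)\times \#\A}$ has the column indexed by $a\in \A$ equal to $(h(a),a_1,\dots,a_n)^\top$. The coefficient matrix is still $C$, which has full row rank because $\A$ is full dimensional, exactly as in the proof of \Cref{prop:zariski:closure:Adiscr}. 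The columns of $\tilde A$ moreover have affine dimension $n+1$ in $\R^{n+1}$: the lifted points $\{(0,a):a\in \A_+\}$ span an affine subspace of dimension $n$ contained in $\{y_0=0\}$, and picking any $b\in \A_-$ (nonempty by hypothesis) gives a point $(h(b),b)$ with $h(b)>0$, which leaves that hyperplane.

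Applying \cite[Theorem 3.1]{feliu_henriksson_pascualescudero_generic_consistency} as in the proof of \Cref{prop:zariski:closure:Adiscr} then yields that the extended incidence variety
\begin{equation*}
\tilde{\mathcal I}\;:=\;\bigl\{(c,t,x)\in (\C^*)^\A\times (\C^*)^{n+1} : F(c\star t^h, x)=0\bigr\}
\end{equation*}
is irreducible and nonsingular. A standard dimension count, in which the $n+1$ equations cut out a complete intersection in a $(\#\A+n+1)$-dimensional ambient variety thanks to the full row rank of $C$, gives $\dim \tilde{\mathcal I}=\#\A$. Next I would argue that the projection $\pi\colon\tilde{\mathcal I}\to (\C^*)^\A$, $(c,t,x)\mapsto c$, is dominant: since $\A_-\subseteq \intt(\conv(\A_+))$ forces $\conv(\A)=\conv(\A_+)$, the hypotheses of \Cref{cor:copositivity:interior} are met for every $c\in \R^\A_{>0}$, so $S$ is nonempty and produces a real positive point in $\tilde{\mathcal I}$ over each such $c$. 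Hence $\pi(\tilde{\mathcal I})\supseteq \R^\A_{>0}$, which is Zariski dense in $(\C^*)^\A$.

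Finally, since $\tilde{\mathcal I}$ is irreducible of dimension $\#\A$ and $\pi$ is a dominant morphism onto the irreducible variety $(\C^*)^\A$ of the same dimension, the generic fiber of $\pi$ has dimension $0$. This gives a Zariski open dense $U\subseteq (\C^*)^\A$ with $\#\pi^{-1}(c)<\infty$ for all $c\in U$; the intersection $U\cap \R^\A_{>0}$ is the complement in $\R^\A_{>0}$ of a proper real algebraic subvariety, which is the genericity statement claimed. I expect the main technical point to be verifying that the extended system falls within the scope of \cite[Theorem 3.1]{feliu_henriksson_pascualescudero_generic_consistency}, which reduces to the two dimensional checks on $C$ and $\tilde A$ outlined above; once irreducibility is granted, dominance comes for free from \Cref{cor:copositivity:interior} via Viro patchworking, and the final conclusion is a routine application of the fiber dimension theorem.
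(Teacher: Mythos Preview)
Your proposal is correct and follows essentially the same approach as the paper: both recognize $F(c\star t^h,x)=C(c\star (t,x)^{\tilde A})$ as a vertically parametrised system with full row rank coefficient matrix $C$, and both invoke \Cref{cor:copositivity:interior} to establish generic consistency. The only difference is cosmetic: the paper cites \cite[Theorem~3.7]{feliu_henriksson_pascualescudero_generic_consistency} directly for the generic finiteness conclusion, whereas you unpack that result by appealing to \cite[Theorem~3.1]{feliu_henriksson_pascualescudero_generic_consistency} for irreducibility and then applying the fiber dimension theorem by hand.
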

\begin{proof}
Recall the matrices $A$ and $C$ from  \cref{eq:crit:syst:matrix:form}. Let $A^h$ be the matrix obtained after adjoining an extra row at the top of $A$ whose $a$-th entry is $h(a)$. Then,
\[
F(c\star t^h, x)=C(c\star(t,x)^{A^h})\, .
\]
From this description follows that $F(c\star t^h, x)$ is a vertically parametrised system in the sense of \cite{feliu_henriksson_pascualescudero_generic_consistency} in the  variables $t,x_1,\dots,x_n$ and parameters $c$. It follows from \Cref{cor:copositivity:interior} that $F(c\star t^h, x)=0$ admits at least one solution for all choices of $c\in\R^\A_{>0}$. Moreover, $C$ has full row rank, since $\A$ is full dimensional. The statement now follows from  \cite[Theorem 3.7]{feliu_henriksson_pascualescudero_generic_consistency}. 
\end{proof}

\Cref{prop:generic:coefficients:isolated} 
guarantees that for generic coefficients, all zeros of the critical system will be isolated. However, it can still  occur that numerical solvers  miss isolated solutions. In the next subsection, we prove that signomials with nonseparable signed support are well behaved in that respect. 

The second limitation of the approach is that 
we need to be able to decide whether $t_*\geq 1$ holds. 
In our implementation, we exploit the certification methods from \texttt{HomotopyConstinuation.jl} to provide a numerical certificate of copositivity. Specifically, we find a certified interval containing $t_*$  and exactly one element of the set $T$ from \Cref{cor:copositivity:interior}. If $1$ does not belong to the certified interval, then we have a numerical certificate on whether the polynomial is copositive. However, if $1$ belongs to the interval, we cannot decide whether $t_*$ is larger or smaller than $1$. 
This poses a constraint in the application of the criterion to polynomials that are very close to the signed discriminant.

\subsection{The criterion for nonseparable supports}\label{subsec:criterion:nonseparable:supports}
In this subsection, we study in more detail the copositivity criterion  in the setting of \Cref{thm:1solution}, where  $\# T=1$. First, we show that the unique element  $t_*$ of $T$ corresponds to a nonsingular zero  of the critical system, so it should be detectable by numerical solvers. Second, we show that $t_*$ can be computed using \textit{parameter homotopies} and tracking a single path. Some familiarity with homotopy continuation methods is assumed(see for example \cite{sommese}). 

\begin{lemma}\label[lem]{lemma:hessian:nonseparable}
Let $f$ be an n-variate signomial with full dimensional and nonseparable signed support $(\A_+,\A_-)$ such that $\A_-\subseteq \intt(\conv(\A_+))$. Then the following statements hold:
 \begin{enumerate}[label=(\roman*)]
 \item If $f\in \nabla_{>0}(\A_+,\A_-)$, then $\# \Sing_{>0}(f)=1$.
 \item If $x_*\in\Sing_{>0}(f)$, then $\det(\Hess(f)(x_*))>0$.
 \end{enumerate}
\end{lemma}
    \begin{proof}
Part (i) follows form \Cref{lem:SONC:singular:zeros} and the fact that $f$ is SONC by \Cref{thm:nonseparable:singular:polynomial:SONC}. For part (ii), consider the signomial $g(x)\coloneqq f(x_*\star x)$, which also has signed support $(\A_+,\A_-)$ and $\mathbbm{1}\in \Sing_{>0}(g)$. The chain rule gives
    \[
    \Hess(g)(x)=\diag(x_{*})\Hess(f)(x_*\star x)\diag(x_{*})\,.
    \]
    As $\det(\diag(x_{*}))>0$, it is enough to show that $\det(\Hess(g)(\mathbbm{1}))>0$. Let $D\in\mR_n(\A_+)$ be such that $\A_-\subseteq D$, recall the set  $\Lambda\coloneqq \Lambda(\A_+,D)$ 
       from \eqref{eq:lambdaD}, and let $\delta\in \mathcal{Z}(\Lambda,c)$ (which exists by \Cref{lemma:nonneg:sol:nonseparable:signomial}). Combining \Cref{prop:simplexNP:pos:sing:SONC} and \Cref{lemma:decomposition:from:nonnegative:solution}, we obtain a SONC decomposition 
\[ g=\sum_{\Delta\in\Lambda\colon  \delta_\Delta>0} \ \sum_{b\in \A_-} q_{\Delta,b} = \sum_{b\in\A_-} g_b \qquad \text{ with }\qquad g_b\coloneqq \sum_{\Delta\in\Lambda\colon\delta_\Delta>0}q_{\Delta,b} \, , 
\]  
 where each $q_{\Delta,b}$ is a circuit signomial with $\mathbbm{1}\in\Sing_{>0}(q_{\Delta,b})$ (hence copositive by \Cref{lemma:singular:circuit:copositive}) and signed support $(\A_+\cap\Gamma_{b,\Delta},\{b\})$, where $\Gamma_{b,\Delta}$ is the unique face of $\Delta$ containing $b$ in its relative interior. 

   As \cref{eq:def:Z} holds, for every $a\in \A_+$ there exists $\Delta\in \Lambda$ such that $\delta_\Delta>0$ and $a\in \vertices(\Delta)$. Hence,    
    the vertices of the simplices in $\{\Delta\in \Lambda : \delta_\Delta>0\}$ cover $\A_+$. Therefore, by \Cref{lemma:dimensionality}, the support of $g_b$ is full dimensional for all $b\in \A_-$. By \cite[Theorem 4.1]{Mate2024}, $\det(\Hess(g_b)(\mathbbm{1}))>0$. As a sum of positive definite matrices is positive definite, we obtain that $\det(\Hess(g)(\mathbbm{1}))>0$.\end{proof}
    
    \begin{theorem}\label{thm:Jacobian:nondeg:tracking}
Let  $\A_+,\A_-\subseteq \R^n$ be disjoint finite sets with $\A_-\subseteq \intt(\conv(\A_+))$ such that $(\A_+,\A_-)$ is  full dimensional and  nonseparable. Let $F$ be the critical system of $(\A_+,\A_-)$. 
  For $\A=\A_+\cup\A_-$, let $c\in\R^\A_{>0}$
  and let $h\colon \A\to \R$ be a height function lifting $\A_-$. 
  Then the system $F(c\star t^h,x)$ has a unique positive zero in the variables $(t,x)$, which  is additionally nondegenerate as a zero of $F$. 
\end{theorem}

\begin{proof} 
 Let $f\in \mathcal{S}(\A_+,\A_-)$ have vector of nonsigned coefficients $c$ and let  $f_t$ be the associated signomial determined by $h$ as in \cref{eq:Viro}. By \Cref{thm:1solution}, the set $T$ has cardinality $1$, and with this \Cref{lemma:hessian:nonseparable}(i) tells us that $F(c\star t^h,x)$ has a unique positive zero $(t,x)\in \R^{n+1}_{>0}$. To prove that $(t,x)$ is   nondegenerate, we show that $\det(J_F(t,x))\neq 0$.
 
As  $F(c\star t^h,x)=(f_t(x),x_1 \frac{\partial f_t}{\partial x_1}(x),\dots,x_n \frac{\partial f_t}{\partial x_n}(x))$, we obtain that $J_{F}(t,x)$ has the block form
    \begin{equation}\label{eq_matrixM}
        J_{F}(t,x)=\begin{pmatrix}[c|c c c]
         \frac{\partial f_t(x)}{\partial t} & 0 &\cdots & 0\\ \hline
        * & &  \diag(x) \Hess(f_{t})(x)   &\\
    \end{pmatrix}\, ,
    \end{equation}
    where we have used that $\frac{\partial}{\partial x_j}(x_i \frac{\partial f_t}{\partial x_i})(x)
    =x_i \frac{\partial^2 f_t}{\partial x_i \partial x_j}(x)$ as $F(c\star t^h,x)=0$. 
            This gives
    \begin{align*}
        \det(J_{F}(t,x))&=\frac{\partial f_t(x)}{\partial t} \det(\Hess(f_{t})(x))\,  x_1\cdots x_n \neq 0 
    \end{align*}
   as all terms of $\frac{\partial f_t(x)}{\partial t}$ are negative and  $\det(\Hess(f_{t})(x))\neq 0$  by \Cref{lemma:hessian:nonseparable}.
\end{proof}

Next, we show that for full dimensional signed supports in the setting of \Cref{thm:nonseparable:signed:supports:SONC},  the unique element in $T$ can be computed using \textit{parameter homotopies}. 
Our setup is thus given by disjoint nonempty finite sets $\A_+,\A_-\subseteq \R^n$  such that  $\A_-\subseteq\intt(\conv(\A_+))$ and 
$(\A_+,\A_-)$ is nonseparable and full dimensional. We write 
\[ \A_+=\{a_1,\dots,a_k\}\,\qquad \A_-=\{b_1,\dots,b_m\}\,,  \]
and let $F$ be the critical system of $(\A_+,\A_-)$. 
Let $h\colon \A\to\R$ be a height function lifting $\A_-$ and let $f\in\mathcal{S}(\A_+,\A_-)$ have vector of nonsigned coefficients $c\in\R^\A_{>0}$.

We design the starting system of the parameter homotopy by looking for coefficients $\hat{c}=(\hat{c}_a)_{a\in \A}$ for which $F(\hat{c},\mathbbm{1})=0$, that is, $\mathbbm{1}$ is a positive singular zero of the signomial with vector of nonsigned coefficients $\hat{c}$. By treating the nonsigned coefficients as unknowns, the critical system   evaluated at $\mathbbm{1}$ gives a linear system in the variables $\hat{c}_{a_1},\dots,\hat{c}_{a_k},\hat{c}_{b_1},\dots, \hat{c}_{b_m}$:
\begin{equation}\label{eq:starting:system}
    \begin{pmatrix}
    1 & \dots & 1\\
    a_1 & \dots & a_{k}\\
\end{pmatrix}
\begin{pmatrix}
    \hat{c}_{a_1}\\
    \vdots\\
    \hat{c}_{a_k}
\end{pmatrix}=\hat{c}_{b_1}\begin{pmatrix}
    1\\
    b_1
\end{pmatrix}+\dots+\hat{c}_{b_m}\begin{pmatrix}
    1\\
    b_m
\end{pmatrix} \, .
\end{equation}

As $b_1\in \intt(\conv(\A_+))$ and $k\geq n+1$, there exist positive convex coordinates $(\lambda_{1}^1,\dots,\lambda_{k}^{1})$ of $b_1$ with respect to $\A_+$ satisfying that
$
(\hat{c}_{a_1},\dots,\hat{c}_{a_k},\hat{c}_{b_1},\dots,\hat{c}_{b_m})=(\lambda_{1}^1,\dots,\lambda_{k}^1,1,0,\dots,0)$ 
solves system \eqref{eq:starting:system}. With this choice of   $\hat{c}$, we define the parameter homotopy  
\begin{equation}\label{eq:def:homotopy}
    H(s,(t,x))\coloneqq F\big((s\, c+(1-s)\, \hat{c})\star t^h,x\big)\, .
\end{equation}
For each   $s\in[0,1]$,  $H(s,(t,x))$ is  the critical system of the signomial 
\begin{equation}\label{eq:homotopy:polynomial}
     f_{s,t} =\sum_{a\in\A_+}(s\, c_a+(1-s)\hat{c}_a)\, x^a-\sum_{b\in\A_-}(s\, c_b+(1-s)\hat{c}_b)\, t^{h(b)}x^b \, 
\end{equation}
with signed support $(\A_+,\A_-)$.

\begin{theorem}\label[thm]{thm:tracking:1:path}
Let $\A_+,\A_-\subseteq \R^n$ be disjoint  nonempty finite sets such that $\A_-\subseteq \intt(\conv(\A_+))$ and  $(\A_+,\A_-)$ is nonseparable and full dimensional. Let $f\in\mathcal{S}(\A_+,\A_-)$ have vector of nonsigned coefficients $c$. Let $T$ be as in \Cref{cor:copositivity:interior}. 

The solution path along the homotopy $H$ from \cref{eq:def:homotopy} starting at $(1,\mathbbm{1})$ converges to $(t_*,x_*)\in \R^{n+1}_{>0}$ and $T=\{t_*\}$.
\end{theorem}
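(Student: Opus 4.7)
The plan is to leverage two ingredients already established: uniqueness of the positive solution to the critical system at each parameter value (\Cref{thm:1solution}) and nondegeneracy of the Jacobian there (\Cref{prop:Jacobian:nondeg:tracking}). Together with the implicit function theorem, these will produce a smooth path in $\R^{n+1}_{>0}$ connecting $(1, \mathbbm{1})$ at $s = 0$ to the desired $(t_*, x_*)$ at $s = 1$.

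First, I would verify that the hypotheses of \Cref{thm:1solution} and \Cref{prop:Jacobian:nondeg:tracking} hold at every $s \in [0,1]$ for the Viro polynomial $f_{s,t}$ from \eqref{eq:homotopy:polynomial}. For $s \in (0,1]$ the coefficients $s\, c + (1-s)\, \hat{c}$ are positive on all of $\A$, so $f_{s,t}$ has signed support $(\A_+, \A_-)$, which is full dimensional and nonseparable by hypothesis. For $s = 0$, only $\hat{c}_{b_1}$ among $\hat{c}_{b_1}, \dots, \hat{c}_{b_m}$ is nonzero, so the signed support becomes $(\A_+, \{b_1\})$; this pair is still full dimensional and nonseparable, since $b_1 \in \intt(\conv(\A_+))$ lies in some cell of $\mathcal{F}_n(\A_+)$. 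Consequently, for every $s \in [0,1]$, \Cref{thm:1solution} yields a unique $(t(s), x(s)) \in \R^{n+1}_{>0}$ with $H(s, (t(s), x(s))) = 0$. Moreover, applying \Cref{lemma:bary:coord} with $\hat{c}_{a_i} = \lambda_i^1$ shows $\mathbbm{1} \in \Sing_{>0}(f_{0,1})$, so $H(0, (1, \mathbbm{1})) = 0$, and by uniqueness $(t(0), x(0)) = (1, \mathbbm{1})$.

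Next, I would show that $s \mapsto (t(s), x(s))$ is continuous (indeed smooth) on $[0, 1]$ by a local-to-global argument. Fix $s_0 \in [0,1]$. \Cref{prop:Jacobian:nondeg:tracking} provides invertibility of the partial Jacobian $J_{(t,x)} H(s_0, (t(s_0), x(s_0)))$, so the implicit function theorem yields an open neighborhood $U$ of $s_0$ in $[0,1]$ and a smooth $\phi \colon U \to \R^{n+1}$ with $\phi(s_0) = (t(s_0), x(s_0))$ and $H(s, \phi(s)) = 0$ for $s \in U$. Shrinking $U$ using continuity of $\phi$ and openness of $\R^{n+1}_{>0}$ ensures $\phi(U) \subseteq \R^{n+1}_{>0}$, after which uniqueness forces $\phi(s) = (t(s), x(s))$ on $U$. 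Hence the path is smooth on $[0, 1]$ and converges at $s = 1$ to $(t_*, x_*) := (t(1), x(1))$. By construction $t_* \in S$, and \Cref{thm:1solution} gives $S = \{t_*\}$.

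The only real subtlety is the patching of the local implicit function theorem branches into a single global path; uniqueness from \Cref{thm:1solution} handles this automatically, since any smooth local extension through a positive solution must agree with the unique positive solution on its domain. A minor point worth flagging is that the signed support of the Viro polynomial at the endpoint $s = 0$ is not $(\A_+, \A_-)$ but the strictly smaller $(\A_+, \{b_1\})$; nonseparability persists through this degeneration, so the previous theorems apply uniformly on $[0, 1]$.
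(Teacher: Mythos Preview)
Your proposal is correct and follows essentially the same approach as the paper: invoke \Cref{thm:1solution} for uniqueness of the positive solution at each $s\in[0,1]$ and \Cref{prop:Jacobian:nondeg:tracking} for nondegeneracy of the Jacobian there, then conclude that the solution path is well defined and lands at the unique positive solution for $s=1$. Your write-up is in fact more careful than the paper's terse proof, since you explicitly treat the degeneration of the signed support to $(\A_+,\{b_1\})$ at $s=0$ and spell out the implicit-function-theorem patching that the paper leaves implicit.
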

\begin{proof}
    For each $s\in[0,1]$, $H(s,(t,x))$ is the critical system of  $f_{s,t}$ in \cref{eq:homotopy:polynomial}. By \Cref{thm:1solution,thm:Jacobian:nondeg:tracking},  the system $H(s,(t,x))$ has a unique positive zero $(t_s,x_s)$, which is nondegenerate. Hence, the solution path starting at $(1,\mathbbm{1})$ converges to a zero $(t_*,x_*)\in \R^{n+1}_{>0}$ of $H(1,(t,x))$. By \Cref{thm:1solution} and the construction of $H$, it follows that $T=\{t_*\}$.
\end{proof}
 
\begin{example}\label[ex]{example:tracking:1path}
Let $f=1+x_1^2+x_2^2+x_1^2x_2^2-x_1x_2$, $h$ be the height function equal to $1$ for $(1,1)$ and zero otherwise, and consider the associated signomial 
$f_t=1+x_1^2+x_2^2+x_1^2x_2^2-t\,x_1x_2$. It follows from \Cref{example:elimination:square:end} that $f$ is copositive and, furthermore, if $(t_*,x_*)$ is the unique positive zero of $F(c\star t^h,x)$,  then $t_*=4$. Let us reach the same conclusion using the parameter homotopy in \eqref{eq:homotopy:polynomial}.  The linear system in \cref{eq:starting:system} has a positive solution $\hat{c}=(\tfrac{1}{4},\tfrac{1}{4},\tfrac{1}{4},\tfrac{1}{4},1)$, so
     \[
     \hat{f}\coloneqq\tfrac{1}{4} + \tfrac{1}{4}x_1^{2}+\tfrac{1}{4}x_2^{2}+\tfrac{1}{4}x_1^2x_2^{2}-x_1x_2
     \]
satisfies $\mathbbm{1}\in \Sing_{>0}(\hat{f})$. For every $s\in[0,1]$,  the homotopy in \cref{eq:def:homotopy} is the critical system of the polynomial
    \[
    f_t(s)=\tfrac{1}{4}(1-s)+s + (\tfrac{1}{4}(1-s)+s)x_1^{2}+(\tfrac{1}{4}(1-s)+s)x_2^{2}+(\tfrac{1}{4}(1-s)+s)x_1^2x_2^{2}-t\,x_1x_2.
    \]
    Using \texttt{HomotopyContinuation.jl}, we track the solution path from $(1,(1,1))$ to the zero of $H(1,(t,x))$ and obtain $t_*=4$.
\end{example}

In the next section, we give an example that shows how this single-path tracking strategy greatly improves the efficiency of the criterion from \Cref{cor:copositivity:interior}.

\subsection{Implementation and SAGE}\label{subsec:implementation:SAGE}
We end our discussion about computational aspects by introducing the proof-of-concept Julia package \texttt{CopositivityDiscriminants.jl}, available at the GitHub repository
\begin{center}
    \url{https://github.com/joan-ferrer/CopositivityDiscriminants.jl}\,.
\end{center}
The package relies on the packages \texttt{Oscar.jl} \cite{OSCAR} and \texttt{HomotopyContinuation.jl} \cite{homotopycontinuationjl} and 
it implements the criterion in \Cref{cor:copositivity:interior} for  Laurent polynomials, that is, signomials with integer exponents, with signed support satisfying $\A_-\subseteq\intt(\conv(\A_+)$. For nonseparable supports, it also supports   single-path tracking  as in \Cref{thm:tracking:1:path}. 

Our implementation uses the default height function that attains the value $1$ at all $b\in \A_-$. Based on our experiments, this is the choice leading to more stable results. 

 \begin{example}\label[ex]{example:package:1}
     Let $f=1+x_1^{40}+x_2^{40}+x_3^{40}+x_4^{40}-d\, x_1x_2x_3x_4$ with $d=(\tfrac{10}{9})^{9/10}40^{1/10}+10^{-7}$,  $(\A_+,\A_-)$ be its signed support, which is a circuit, and let $c$ be the vector of nonsigned coefficients of $f$. We know that $f$ attains negative values on $\R^4_{>0}$, as $d$ is slightly larger than the circuit number in \eqref{eq:circuit_number}. The signed support of $f$ is nonseparable, so we apply single-path tracking  from \Cref{thm:tracking:1:path}.
     
      The linear system in \cref{eq:starting:system} has a positive solution $\hat{c}=(\tfrac{9}{10},\tfrac{1}{40},\tfrac{1}{40},\tfrac{1}{40},\tfrac{1}{40},1)$, so
     \[
     \hat{f}\coloneqq\tfrac{9}{10} + \tfrac{1}{40}x_1^{40}+\tfrac{1}{40}x_2^{40}+\tfrac{1}{40}x_3^{40}+\tfrac{1}{40}x_4^{40}-x_1x_2x_3x_4
     \]
 satisfies $\mathbbm{1}\in \Sing_{>0}(\hat{f})$.
     Pick $h\colon\A\to\Z$ to be 0 on $\A_+$ and 1 on $\A_-$ and let $H$ be the homotopy in \cref{eq:def:homotopy}. Tracking $(1,\mathbbm{1})$ along $H$, we obtain the value of $t_*$ in \Cref{cor:copositivity:interior}. This strategy is supported in \texttt{CopositivityDiscriminants.jl} with the command 
     
     \verb| > check_copositivity(f; nonseparable=true)|
     
     \noindent which computes $t_*=0.99999993710556$ in approximately  $5$ seconds. Moreover, the function computes a certified interval containing $t_*$ that does not include $1$, so $t_*<1$ and  $f$ is certified to attain negative values. If we do not exploit that the  support is nonseparable, i.e. \texttt{nonseparable=false}, then 2,560,000 paths are tracked and it takes about 10 minutes to obtain the same result. In comparison, computing all the positive critical points of $f$ and evaluating $f$ at them, takes about $5$ minutes. Using \texttt{HomotopyContinuation.jl}, we find a negative minimum as expected. However, numerical certification is not as straightforward for the evaluation step. 
     
\medskip
Finally, we compare our method with another existing approach for checking copositivity based on SAGE certificates  \cite{chandrasekaran_SAGE, murray_Newtonpol, Murray_sageopt}.
In this setting, one tries to decompose the polynomial into a sum of copositive polynomials of the form
\begin{equation}\label{eq:def:SAGE}
\R^{n}_{>0}\to\R\, , \qquad x \mapsto \sum_{a\in \A_+}c_a\, x^a+d\, x^b
\end{equation}
with $c\in\R^{\A_+}_{>0}$ and $d\in \R$, since whether  such a polynomial is positive can  easily be certified using relative entropy inequalities.
The optimization package \texttt{sageopt} \cite{Murray_sageopt} provides a state-of-the-art implementation for computing such certificates.

Our polynomial $f$ was intentionally constructed to be very close to the discriminant, as we perturbed the circuit number by a factor of $10^{-7}$. In this case, \texttt{sageopt} wrongly concludes that $f$ is copositive in approximately 0.011 seconds. For perturbations of larger magnitude, i.e. of the order $10^{-6}$ or more, \texttt{sageopt} correctly concludes that the   polynomial is not copositive. 

Using \texttt{CopositivityDiscriminants.jl}, we can correctly certify that the polynomial is not copositive for perturbations as small as $10^{-14}$. This suggests that the method of tracking a single path may offer a more numerically stable approach for checking the copositivity of a polynomial as in \eqref{eq:def:SAGE}, compared to the relative entropy method.
 \end{example}

\medskip

\goodbreak
{\small
\noindent {\bf Authors' addresses:}\\
\noindent 
Elisenda Feliu, University of Copenhagen \hfill{\tt efeliu@math.ku.dk}\\
Joan Ferrer, University of Copenhagen \hfill{\tt jfr@math.ku.dk}\\
Máté L. Telek, Budapest University of Technology and Economics \hfill {\tt mtelek@math.bme.hu}

 }


\begin{thebibliography}{10}

\bibitem{ToricPolya}
L.~Baldi, R.~Sinn, M.~L. Telek, and J.~Weigert.
\newblock Toric extensions of {P}\'olya's theorem.
\newblock {\em arXiv}, 2511.07342, 2025.

\bibitem{bihan2022new}
F.~Bihan, T.~Humbert, and S.~Tavenas.
\newblock New bounds for the number of connected components of fewnomial hypersurfaces.
\newblock {\em arXiv}, 2208.04590v2, 2022.

\bibitem{Parrilo}
G.~Blekherman, P.~A. Parrilo, and R.~R. Thomas, editors.
\newblock {\em Semidefinite Optimization and Convex Algebraic Geometry}, volume~13 of {\em MOS-SIAM Series on Optimization}.
\newblock SIAM, Philadelphia, PA, 2013.

\bibitem{homotopycontinuationjl}
P.~Breiding and S.~Timme.
\newblock {H}omotopy{C}ontinuation.jl: {A} package for homotopy continuation in {J}ulia.
\newblock In J.~H. Davenport, M.~Kauers, G.~Labahn, and J.~Urban, editors, {\em Mathematical Software -- {ICMS} 2018}, pages 458--465. Springer International Publishing, 2018.

\bibitem{chandrasekaran_SAGE}
V.~Chandrasekaran and P.~Shah.
\newblock Relative entropy relaxations for signomial optimization.
\newblock {\em SIAM J. Optim.}, 26(2):1147--1173, 2016.

\bibitem{Multistationarity2017}
C.~Conradi, E.~Feliu, M.~Mincheva, and C.~Wiuf.
\newblock Identifying parameter regions for multistationarity.
\newblock {\em PLoS Comput. Biol.}, 13(10), 2017.

\bibitem{deKlerkPasechnik}
E.~de~Klerk and D.~V. Pasechnik.
\newblock Approximation of the stability number of a graph via copositive programming.
\newblock {\em SIAM J. Optim.}, 12(4):875--892, 2002.

\bibitem{OSCAR}
W.~Decker, C.~Eder, C.~Fieker, M.~Horn, and M.~Joswig, editors.
\newblock {\em The {C}omputer {A}lgebra {S}ystem {OSCAR}: {A}lgorithms and {E}xamples}, volume~32 of {\em Algorithms and {C}omputation in {M}athematics}.
\newblock Springer, 2025.

\bibitem{Mate2024}
W.~Deng, J.~M. Rojas, and M.~L. Telek.
\newblock Viro's patchworking and the signed reduced {A}-discriminant.
\newblock {\em J. Symbolic Comput}, 132, 2026.

\bibitem{ellwanger}
J.~Ellwanger, T~Theobald, and T.~de~Wolff.
\newblock Nonnegativity of signomials with {N}ewton simplex over convex sets.
\newblock {\em arXiv}, 2504.10302, 2025.

\bibitem{feliu_henriksson_pascualescudero_generic_consistency}
E.~Feliu, O.~Henriksson, and B.~Pascual-Escudero.
\newblock Generic consistency and nondegeneracy of vertically parametrized systems.
\newblock {\em J. Algebra}, 677:630--666, 2025.

\bibitem{kinetic_space_multistationarity_dual_phosphorylation}
E.~Feliu, N.~Kaihnsa, T.~de~Wolff, and O.~Yürük.
\newblock The kinetic space of multistationarity in dual phosphorylation.
\newblock {\em J. Dynam. Differential Equations}, 34(2):825--852, 2022.

\bibitem{algebraic_boundary_SONC}
J.~Forsg\r{a}rd and T.~de~Wolff.
\newblock The algebraic boundary of the {SONC}-cone.
\newblock {\em SIAM J. Appl. Alg. and Geom.}, 6(3):468--502, 2022.

\bibitem{gelfand}
I.~M. Gel'fand, M.~M. Kapranov, and A.~V. Zelevinsky.
\newblock {\em Discriminants, resultants, and multidimensional determinants}.
\newblock Mathematics: Theory \& Applications. Birkh\"auser Boston, Inc., Boston, MA, 1994.

\bibitem{HelminckRen22}
Paul~Alexander Helminck and Yue Ren.
\newblock Generic root counts and flatness in tropical geometry.
\newblock {\em J. Lond. Math. Soc. (2)}, 111(5):Paper No. e70171, 47, 2025.

\bibitem{Hilbert_SOS}
D.~Hilbert.
\newblock Über die {D}arstellung definiter {F}ormen als {S}umme von {F}ormenquadraten.
\newblock {\em Math. Ann.}, 32(3):342--350, 1888.

\bibitem{hilbert}
D.~Hilbert.
\newblock Mathematical problems.
\newblock {\em Bull. Amer. Math. Soc.}, 8:437--479, 1902.

\bibitem{wolff_nonneg}
S.~Iliman and T.~de~Wolff.
\newblock Amoebas, nonnegative polynomials and sums of squares supported on circuits.
\newblock {\em Res. Math. Sci.}, 3, 2016.

\bibitem{framework_SAGE_SONC}
L.~Katthän, H.~Naumann, and T.~Theobald.
\newblock A unified framework of {SAGE} and {SONC} polynomials and its duality theory.
\newblock {\em Mathematics of Computation}, 90, 2021.

\bibitem{Lasserre_Global_optim}
J.~B. Lasserre.
\newblock Global optimization with polynomials and the problem of moments.
\newblock {\em SIAM J. Optim.}, 11(3):796--817, 2000/01.

\bibitem{motzkin_copositive}
T.~S. Motzkin.
\newblock Copositive quadratic forms.
\newblock Report 1818, National Bureau of Standards, 1952.

\bibitem{Murray_sageopt}
R.~Murray, V.~Chandrasekaran, and A.~Wierman.
\newblock Signomial and polynomial optimization via relative entropy and partial dualization.
\newblock {\em Mathematical Programming Computation}, 13(2):257–295, 2020.

\bibitem{murray_Newtonpol}
R.~Murray, V.~Chandrasekaran, and A.~Wierman.
\newblock Newton polytopes and relative entropy optimization.
\newblock {\em Found. Comput. Math.}, 21(6):1703--1737, 2021.

\bibitem{Nesterov}
Y.~Nesterov.
\newblock Squared functional systems and optimization problems.
\newblock In {\em High performance optimization}, volume~33 of {\em Appl. Optim.}, pages 405--440. Kluwer Acad. Publ., Dordrecht, 2000.

\bibitem{Nie_discriminants}
J.~Nie.
\newblock Discriminants and nonnegative polynomials.
\newblock {\em J. Symbolic Comput.}, 47(2):167--191, 2012.

\bibitem{Moment_polyomial_optimization}
J.~Nie.
\newblock {\em Moment and polynomial optimization}, volume~31 of {\em MOS-SIAM Series on Optimization}.
\newblock Society for Industrial and Applied Mathematics (SIAM), 2023.

\bibitem{craciun_pantea_koeppl}
C.~Pantea, H.~Koeppl, and G.~Craciun.
\newblock Global injectivity and multiple equilibria in uni- and bi-molecular reaction networks.
\newblock {\em Discrete Contin. Dyn. Syst. Ser. B}, 17(6):2153--2170, 2012.

\bibitem{SottilePir}
A.~F. Pir and F.~Sottile.
\newblock Irrational toric varieties and secondary polytopes.
\newblock {\em Discrete. Comput. Geom.}, 67:1053–1079, 2022.

\bibitem{SottilePostinghelVillamizar}
E.~Postinghel, F.~Sottile, and N.~Villamizar.
\newblock Degenerations of real irrational toric varieties.
\newblock {\em J. Lond. Math. Soc.}, 92(2):223--241, 2015.

\bibitem{ReznickBruce1989Fdft}
B.~Reznick.
\newblock Forms derived from the arithmetic-geometric inequality.
\newblock {\em Math. Ann.}, 283(3):431--464, 1989.

\bibitem{Robinson}
R.~M. Robinson.
\newblock Some definite polynomial which are not sums of squares of real polynomials.
\newblock {\em Notices Amer. Math. Soc.}, 16, 1969.

\bibitem{sommese}
A.~J. Sommese and C.~W. Wampler.
\newblock {\em The Numerical Solution of Systems of Polynomials Arising in Engineering and Science}.
\newblock World Scientific, 2005.

\bibitem{copositive_feynman}
B.~Sturmfels and M.~L. Telek.
\newblock Copositive geometry of {F}eynman integrals.
\newblock {\em Lett. Math. Phys.}, 115(74), 2025.

\bibitem{Mate&Timo2025}
M.~L~. Telek and T.~de~Wolff.
\newblock Signed {A}-discriminant and {SONC} exactness.
\newblock In preparation.

\bibitem{telen2023positive}
S.~Telen.
\newblock Positive toric geometry.
\newblock Available at \url{https://sites.google.com/view/simontelen/teaching/positive-toric-geometry}, 2023.

\bibitem{Theobald}
T.~Theobald.
\newblock {\em Real Algebraic Geometry and Optimization}, volume 241 of {\em Graduate Studies in Mathematics Series}.
\newblock American Mathematical Society, 2024.

\bibitem{ViroThesis}
O.~Y. Viro.
\newblock {\em Constructing real algebraic varieties with prescribed topology}.
\newblock PhD thesis, LOMI, Leningrad, 1983.

\bibitem{Wang_Nonneg}
J.~Wang.
\newblock Nonnegative polynomials and circuit polynomials.
\newblock {\em SIAM J. Appl. Alg. and Geom.}, 6(2):111--133, 2022.

\end{thebibliography}
\end{document}